\documentclass[a4paper, 11pt]{amsart}

\usepackage{fullpage}

\usepackage{amssymb}
\usepackage{amsmath,amssymb,tikz-cd}
\usepackage{subdepth}
\usepackage{tensor}
\usepackage{braket}
\usepackage{enumitem}

\numberwithin{equation}{section}

\newtheoremstyle{bfnote}%
{}{}%
{}{}%
{\bfseries}{.}%
{ }%
{\thmname{#1}\thmnumber{ #2}\thmnote{ (#3)}}

\theoremstyle{plain}
\newtheorem{theorem}{Theorem}[section]
\newtheorem{proposition}[theorem]{Proposition}
\newtheorem{lemma}[theorem]{Lemma}
\newtheorem{corollary}[theorem]{Corollary}

\theoremstyle{bfnote}
\newtheorem{condition}[theorem]{Condition}

\theoremstyle{definition}
\newtheorem{definition}[theorem]{Definition}
\newtheorem{assumption}[theorem]{Assumption}

\theoremstyle{remark}
\newtheorem{remark}[theorem]{Remark}
\newtheorem{example}[theorem]{Example}

\usepackage{xcolor}
\usepackage{mathtools}

\usepackage[colorlinks]{hyperref}
\hypersetup{
  allcolors = [rgb]{0,0,0.5}
}
\usepackage[nobysame,alphabetic,initials,msc-links]{amsrefs}

\DefineSimpleKey{bib}{how}

\renewcommand{\eprint}[1]{#1}
\renewcommand{\PrintReviews}[1]{} 
\BibSpec{misc}{%
  +{}{\PrintAuthors}  {author}
  +{,}{ \textit}      {title}
  +{,}{ }             {how}
  +{}{ \parenthesize} {date}
  +{,} { available at \eprint}        {eprint}
  +{,}{ available at \url}{url}
  +{,}{ }             {note}
  +{.}{}              {transition}
}

\mathchardef\mhyph="2D

\newcommand{\bA}{\mathbb{A}}

\newcommand{\bE}{\mathbb{E}}

\newcommand{\bN}{\mathbb{N}}

\newcommand{\bT}{\mathbb{T}}
\newcommand{\bZ}{\mathbb{Z}}

\newcommand{\cC}{\mathcal{C}}
\newcommand{\cD}{\mathcal{D}}
\newcommand{\cK}{\mathcal{K}}
\newcommand{\cL}{\mathcal{L}}
\newcommand{\cM}{\mathcal{M}}

\newcommand{\cZ}{\mathcal{Z}}

\newcommand{\id}{\mathrm{id}}
\newcommand{\reg}{\mathrm{reg}}
\newcommand{\DHR}{\mathrm{DHR}}
\newcommand{\opo}{\mathrm{mop}}
\newcommand{\tr}{\mathrm{tr}}
\newcommand{\Ad}{\mathrm{Ad}}
\newcommand{\fd}{\mathrm{fd}}
\newcommand{\CstAlg}{\mathrm{C}^*\mhyph\mathrm{alg}}
\newcommand{\supp}{\operatorname{supp}}
\newcommand{\Corr}{\operatorname{Corr}}
\newcommand{\Irr}{\operatorname{Irr}}
\newcommand{\Hom}{\operatorname{Hom}}
\newcommand{\End}{\operatorname{End}}

\newcommand{\Ind}{\operatorname{Ind}}
\newcommand{\Rep}{\operatorname{Rep}}
\newcommand{\Span}{\operatorname{Span}}

\newcommand{\rInd}{\mathrm{r}\mhyph\mathrm{Ind}}
\newcommand{\rI}{\mathrm{r}\mhyph\mathrm{I}}
\newcommand{\absv}[1]{\left|#1\right|}
\newcommand{\norm}[1]{\left\|#1\right\|}
\newcommand{\mathbbm}[1]{\mathbb{#1}}

\title{On the structure of DHR bimodules of abstract spin chains}
\author{Lucas Hataishi}
\address{University of Oxford}
\email{lucas.hataishi@gmail.com}
\author{David Jaklitsch}
\address{Universitetet i Oslo}
\email{dajak@math.uio.no}
\author{Corey Jones}
\address{North Carolina State University}
\email{cmjones6@ncsu.edu}
\author{Makoto Yamashita}
\address{Universitetet i Oslo}
\email{makotoy@math.uio.no}

\date{September 9, 2025, minor changes; April 8, 2025, r1}

\begin{document}

\begin{abstract}
Abstract spin chains axiomatize the structure of local observables on the 1D lattice which are invariant under a global symmetry, and arise at the physical boundary of 2+1D topologically ordered spin systems. In this paper, we study tensor categorical properties of DHR bimodules over abstract spin chains.
Assuming that the charge transporters generate the algebra of observables, we prove that the associated category has a structure of modular tensor category with respect to the natural braiding.
Under an additional assumption of algebraic Haag duality, this category becomes the Drinfeld center of the half-line fusion category.
\end{abstract}
\maketitle
\tableofcontents

\section{Introduction}

The theory of quantum spin chains provides an intriguing manifestation of \emph{discrete} quantum field theory, where ideas from the theory of quantum integrable systems, special functions, and operator algebras meet.
In this context, the algebra of local observables is modeled by the tensor product of finite-dimensional matrix algebras, which can be thought of as the kinematical component of a locally finite-dimensional 1+1D algebraic quantum field theory, but without a designated choice of Hilbert space representation or symmetry group~\cite{MR1441540}.
For such systems, tools from the theory of operator algebras turn out to be useful in understanding the algebraic structures behind localized excitations and their interaction with observables. 

Motivated by recent developments in quantum many-body physics, the third-named author introduced the notion of \emph{abstract spin chains}~\cite{2304.00068}.
These are formally defined as nets $A_{\bullet}$ of finite dimensional C$^*$-algebras over $\bZ$, viewed as a discrete metric space.
These objects mathematically characterize the algebras of local observables that emerge under symmetry constraints~\cites{PRXQuantum.4.020357, KawAnn, 2205.15243}, or as holographic duals of topologically ordered spin systems~\cite{2307.12552}.
Unlike concrete spin chains, abstract spin chains can exhibit \emph{algebraic entanglement}, meaning that the algebra of observables localized in some region does not necessarily factorize into the tensor product of algebras localized at points.
This leads to the emergence of categorical structures which provide new algebraic tools for studying topological spin systems through holography and dualities.

There are some obvious similarities to the local operator algebraic approach to the 1+1D chiral conformal field theory, based on nets of infinite-dimensional von Neumann algebras over the real line $\mathbbm{R}$ (or the compactified circle $S^1$)~\cite{Frolich-Gabbiani}.
However, in contrast to the more purely algebraic situation described above, conformal nets have full diffeomorphism group symmetries which lead to intricate interactions between the analytic structure of infinite-dimensional operator algebras and the underlying topological structure of space(time).
Nevertheless, there appear to be deeper structural similarities between the discrete and continuous settings.
In particular, if $A$ is an abstract spin chain, we can associate the braided tensor category of bimodules $\DHR(A_{\bullet})$~\cite{2304.00068}, in direct analogy with the braided category of superselection sectors of conformal nets~\cite{Frolich-Gabbiani}.
This structure is essential in the applications of abstract spin chains in 2+1D topological holography~\cites{inamura202321dsymmetrytopologicalorderlocalsymmetric, PhysRevB.107.155136, 2307.12552}, and the Kramers--Wannier type dualities for spin chains with (fusion categorical) symmetries~\cites{Aasen_2016, PRXQuantum.4.020357, 2304.00068}.

There are many general results about 1+1D chiral conformal field theories, and it is natural to ask which of these might have analogues in the setting of abstract spin chains.
Our goal in this paper is to investigate tensor categorical properties of $\DHR(A_{\bullet})$ beyond this first step, in parallel to foundational results of conformal field theory.
In particular, one motivating result is the modularity of the braided category associated with rational conformal nets~\cite{Kawahigashi-Longo-Muger}.
Based on the analogy with this theory, we can ask the following questions on abstract spin chains.
\begin{enumerate}[itemsep=1ex]
\item When is $\DHR(A_{\bullet})$ a fusion category?
\item When is $\DHR(A_{\bullet})$ modular?
\item When is $\DHR(A_{\bullet})$ Witt trivial, or in other words, when is $\DHR(A_{\bullet})$ a Drinfeld center of a fusion category?
\end{enumerate}
We provide natural sufficient conditions for all of these to hold.

In fact, the main examples of abstract spin chains are \emph{fusion spin chains}, which can be constructed from algebraic data arising from unitary fusion categories.
In these cases, the category of DHR bimodules are manifestly the Drinfeld center of the input fusion category, and such close coupling with general unitary fusion categories is one nice feature of abstract spin systems which makes them more accessible than the usual algebraic quantum field theories.
In view of our structural results, we can conclude that the fusion spin chains are (in some sense) universal examples of abstract spin chains up to categorical equivalence.

Let us turn to the main ideas behind our analysis.
We look at the inclusion of C$^*$-algebras $B_0 \subset A$, where $B_0$ is the subalgebra of observables supported on the half lines $(-\infty, -1]$ and $[0, \infty)$, and $A$ is the algebra of {\em quasi-local observables}.
The theory of subfactors suggest that we should look at the \emph{basic construction} $A \subset B_1$ associated with this inclusion, where $B_1$ is the algebra generated by $A$ and the Jones projection $e$, that satisfies $e a e = E(a) e$ for $a \in A$, where $E$ denotes a conditional expectation from $A$ to the subalgebra $B_0$.

A tensor categorical analogue of this construction is $A_\cC = \bigoplus_i U_i \otimes U_i^*$, where $(U_i)_i$ is a choice of representatives of the irreducible classes in some tensor category $\cC$.
This has a structure of algebra object in the Drinfeld center of (the ind-completion of) $\cC$, and plays an important role in the study of tensor categories.
Under this analogy, the projection to the trivial summand $U_0 \otimes U_0^*$ for the monoidal unit $U_0 = 1_\cC$ corresponds to the dual conditional expectation $\hat{E} \colon B_1 \to A$ characterized by $\hat{E}(e) = (\Ind E)^{-1} 1_A$, where $\Ind E$ is the \emph{index} of $E$ giving the relative size of $A$ over $B_0$.
This suggests that the finiteness of $\Ind E$ corresponds to the property of $\DHR(A_{\bullet})$ being a fusion category, which we substantiate in Corollary~\ref{cor:finite-index-implies-fusion-cat}.

Another key ingredient in our proof of modularity is a system of local operators called \emph{charge transporters}.
This is an analogue of unitary intertwiners that shows up in the theory of algebraic quantum field theory to relate the endomorphism realizations of the same superselection sector, and is a crucial ingredient in Müger's analysis of such theories~\cite{MR1721563}.
In the context of holographic topological order these have been called \emph{patch operators}~\cites{inamura202321dsymmetrytopologicalorderlocalsymmetric, PhysRevB.107.155136}, and have played a crucial role in the physicists' approach to accessing the information that is contained in the DHR category (which in the case of topological holography is the bulk topological order).

For a fusion category $\cC$ and its Drinfeld center $\cZ(\cC)$, the triviality of monodromy for an object $X \in \cZ(\cC)$ and the distinguished object $A_\cC$ implies that the underlying $\cC$-object for $X$ is a direct sum of copies of $1_\cC$, which can be taken as a first step for the proof of modularity for $\cZ(\cC)$.
The above analogy between $A_\cC$ and $B_1$ suggests that we should look at the monodromy for $X \in \DHR(A_{\bullet})$ and $B_1$ to understand the modularity question for $\DHR(A_{\bullet})$.
Indeed, the triviality for monodromy with $B_1$ implies that the charge transporters for $X$, with respect to excitations in the positive and negative regions, belong to the subalgebra $B_0$.
Expanding on this correspondence, we show that $\DHR(A_{\bullet})$ is modular when $B_0$ and the charge transporters for the DHR bimodules generate the observable algebra $A$ (Theorem~\ref{thm:triviality}).

We then look at the question of Witt triviality.
The construction of fusion spin chains suggests that $\DHR(A_{\bullet})$ should be the Drinfeld center of the bimodule category $\cC_-$ for the subalgebra $A_- \subset A$, which is generated by the observables on the negative half line $(-\infty, -1]$.
The structure of braiding on $\DHR(A_{\bullet})$ leads to a natural tensor functor $\DHR(A_{\bullet}) \to \cZ(\cC_-)$.
Now, looking at the correspondence between the central vectors into $A$-bimodules and the morphisms from the $A$ itself as a bimodule, together with a consideration on charge transporters, show that this functor is in fact full.
This result could be viewed as the main result of the paper, so we explicitly state a version of it here. 

\begin{theorem}
Let $A_{\bullet}$ be an abstract spin chain with charge transporter generation (see Condition~\ref{cond:reduced-net}). Then 

\begin{enumerate}
\item
If $A_{\bullet}$ is rational (Condition~\ref{cond:rationality}), then $\DHR(A_{\bullet})$ is a fusion category.
\item 
If in addition $A_{\bullet}$ is locally aligned (Condition~\ref{cond:B1-is-DHR}), then $\DHR(A_{\bullet})$ is unitary modular tensor category.
\item 
If $A_{\bullet}$ also satisfies the algebraic Haag duality (Condition~\ref{cond:Haag}), then $\DHR(A_{\bullet})$ is braided equivalent to $\mathcal{Z}(\mathcal{C}_{-})$, hence is Witt trivial. 
\end{enumerate}
\end{theorem}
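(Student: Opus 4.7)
The plan is to chain the three parts, each using the previous one and each leveraging charge transporter generation to move between local and global data.

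For part (1), the charge transporter generation hypothesis says that $B_0$ together with the DHR charge transporters generate $A$, so the inclusion $B_0 \subset A$ is tightly controlled by DHR data. Rationality amounts to the finiteness of the Jones index $\Ind E$ for the conditional expectation $E \colon A \to B_0$. Under the basic-construction analogy between $B_1$ and the distinguished algebra object $A_{\mathcal{C}}$ of the Drinfeld center sketched in the introduction, this finiteness promotes to the fusion property for $\DHR(A_\bullet)$ via Corollary~\ref{cor:finite-index-implies-fusion-cat}.

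For part (2), having the fusion structure in hand, I would verify the hypothesis of Theorem~\ref{thm:triviality}. The locally aligned condition lets one identify the Jones basic construction $B_1$ with the algebra assembled from the simple DHR bimodules, so triviality of monodromy of $X \in \DHR(A_\bullet)$ with $B_1$ is equivalent to triviality of the monodromy pairing of $X$ with every simple object. If $X$ is transparent, then its positive and negative charge transporters land in $B_0$, and charge transporter generation forces $X$ to intertwine the action of the full observable algebra $A$. Combined with (1), this forces $X$ to be a direct sum of copies of the tensor unit, so the braiding on $\DHR(A_\bullet)$ is non-degenerate and the category is unitary modular.

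For part (3), I would construct the candidate functor $F \colon \DHR(A_\bullet) \to \mathcal{Z}(\mathcal{C}_-)$ by sending a DHR bimodule $X$ to its underlying $\mathcal{C}_-$-object equipped with the half-braiding coming from braiding $X$ with bimodules strictly supported in the negative half-line; algebraic Haag duality ensures this half-braiding is well defined and natural. Fullness follows from the sketched correspondence: a morphism in $\mathcal{Z}(\mathcal{C}_-)$ between $F(X)$ and $F(Y)$ is an $A_-$-central vector in $\Hom_{\mathcal{C}_-}(X, Y)$, and charge transporters promote any such central vector to a genuine $A$-bimodule map. Faithfulness is automatic, since by (2) $\DHR(A_\bullet)$ is modular and a braided tensor functor out of a modular fusion category has no nontrivial kernel. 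Essential surjectivity then follows by a Frobenius--Perron dimension count, matching the total dimension of $\DHR(A_\bullet)$ against that of $\mathcal{Z}(\mathcal{C}_-)$.

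The main obstacle, I expect, is the construction and the fullness argument in part (3): one must show that the braiding of $X$ with all strictly negatively-localized bimodules assembles into a coherent half-braiding, and one must use charge transporters together with algebraic Haag duality to turn $A_-$-central vectors into genuine $A$-bimodule morphisms. Once these two technical ingredients are secured, monoidality, compatibility with braiding, and the Frobenius--Perron dimension comparison should follow from the general theory of Drinfeld centers and modular categories already developed in the paper.
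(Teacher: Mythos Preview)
Your outline for parts (1) and (2) is largely on target, but there is a misconception in (2) and a genuine gap in (3).

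\textbf{Part (2).} You write that local alignment ``lets one identify the Jones basic construction $B_1$ with the algebra assembled from the simple DHR bimodules.'' It does not: local alignment only asserts that $B_1$ is \emph{some} DHR bimodule. The identification $B_1 \simeq Z^{\reg}$ is established much later, requires algebraic Haag duality, and is in fact the heart of part~(3). The paper's actual route to ``charge transporters land in $B_0$'' is more direct: since $B_1$ is DHR, one may choose a localized projective basis of $B_1$ containing the Jones projection $e$ itself. Applying the monodromy formula (Proposition~\ref{prop:monodromy-formula}) with $Y = B_1$ and $\eta_k = e$ yields $[t_{ij}, e] = 0$, and then $e t_{ij} e = E(t_{ij}) e = t_{ij} e$ forces $t_{ij} \in B_0$. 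You have the right conclusion but the wrong mechanism.

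\textbf{Part (3).} Two points. First, the construction of the functor $\tilde F \colon \DHR(A_\bullet) \to \cZ(\cC_-)$ and its fullness (Propositions~\ref{prop:C-_center} and~\ref{prop:F_fully_faithful}) require only \emph{weak} Haag duality and charge-transporter generation; algebraic Haag duality enters only for essential surjectivity. Second, and more seriously, your essential surjectivity argument via a ``Frobenius--Perron dimension count'' is a gap. To match dimensions you would need $\mathrm{FPdim}(\DHR(A_\bullet)) = \mathrm{FPdim}(\cC_-)^2$, and since the adjoint of $F_- \colon \DHR(A_\bullet) \to \cC_-$ sends $1_{\cC_-}$ to $B_1$, this is equivalent to $\mathrm{FPdim}(B_1) = \mathrm{FPdim}(\cC_-) = \mathrm{FPdim}(Z^{\reg})$. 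A priori one only knows $B_1$ is a \emph{subobject} of $Z^{\reg}$, so the dimension count does not close without further input. The paper supplies that input by a structural argument: under algebraic Haag duality one has a tensor equivalence $\Omega \colon \cC_- \to \cC_+^{\opo}$ (Proposition~\ref{prop:mirrorsymmetry}), which yields the decomposition $A \simeq \bigoplus_{Y \in \Irr(\cC_-)} Y \otimes \Omega(Y^*)$ as a $B_0$-bimodule (Proposition~\ref{prop:AisSEApartI}, Corollary~\ref{cor:AisSEApartII}). From this one computes $(B_1)_- \simeq \bigoplus_Y Y \boxtimes_{A_-} Y^*$ directly, establishing $B_1 \simeq Z^{\reg}$ in $\cZ(\cC_-)$. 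Essential surjectivity then follows by combining Proposition~\ref{prop:comm-with-reg-hlf-br-impl-triv} with the fact that the centralizer $\cZ'$ of $\DHR(A_\bullet)$ embeds fully faithfully in $\cC_-$ (Proposition~\ref{prop:Z-prime-full-emb-to-C}). Your proposal does not account for this computation, and without it the dimension comparison cannot be completed.
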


We note that this last result \emph{contrasts} with the case of conformal nets, where the DHR category of superselection sectors is in general not Witt trivial, for example in the case of conformal nets built from loop groups~\cites{MR1645078,gui2020unbounded}.
However, even in the conformal net case, we still expect DHR categories to be Drinfeld centers of \emph{some} W$^*$-tensor category, which will generically fail to be fusion~\cite{MR3747830}.
Indeed our category $\mathcal{C}_{-}$ is very similar in spirit to the W$^*$-tensor category used in the above reference.

In addition, our result gives a holographic confirmation of expectations concerning the non-chirality of local topological order.
Indeed, local topological order essentially captures the class of topologically ordered Hamiltonians that have a commuting projector representation, e.g., toric code, quantum double models, and the Levin--Wen models.
It is largely expected to be the case that the resulting modular tensor category for this class of topologically ordered spin systems has no chiral anomaly, or in other words, is a Drinfeld center of some fusion category.

Let us summarize the structure of the paper.
In Section~\ref{sec:prelim}, we provide preliminary details on abstract spin chains.
In Section~\ref{sec:dist-alg-objs}, we recall some standard construction of algebras that will be crucial to our discussion in next sections.
In Section~\ref{sec:half-line-category}, we introduce the half-line category $\cC_-$, which serves as the candidate for witnessing the Witt triviality of $\DHR(A_{\bullet})$.
In Section~\ref{sec:mod-dri-cent}, we look at the modularity question of $\DHR(A_{\bullet})$.
In Section~\ref{sec:realiz-center}, we look at the Witt triviality of $\DHR(A_{\bullet})$.
Finally, in Section~\ref{sec:lattice-from-inv-bimod}, we give a construction of abstract spin chain generalizing that of fusion spin chains.
Our model is based on a variation of the Longo--Roberts recognition for a construction of bimodules from $2$-categories, and provides an example of abstract spin chains without any homogeneity for the underlying lattice.
In Appendix~\ref{sec:duality}, we look at the question of duality, and show that if an object $X \in \DHR(A_{\bullet})$ is dualizable as an $A$-correspondence, then its dual object also belongs to $\DHR(A_{\bullet})$.

\bigskip
\paragraph{Acknowledgments}
This project started at Topological Quantum Computation meeting at ICMS, Edinburgh, in October 2023.
We thank the organizers for setting up a productive environment that stimulated our discussion.
L.H. is supported by the Engineering and Physical Sciences Research Council (EP/X026647/1).
M.Y. is partially supported by The Research Council of Norway [project 300837].
D.J.\ and M.Y.\ are supported by The Research Council of Norway - project 324944. C.J. is supported by NSF Grant DMS- 2247202.

\section{Abstract spin chains}\label{sec:prelim}

In this section, we briefly review definitions and notations which we will make use of in the main body of work.

Let $\mathcal{I}$ denote the set of discrete intervals in $\bZ$, partially ordered with inclusion.
This is contained in the poset $\mathcal{P}(\bZ)$ of all subsets of $\bZ$ with inclusion.
For $F\subseteq \bZ$, we will denote its complement by $F^{c}$.
Furthermore, for any subset $F$, we will denote its $R$ neighborhood by $F^{+R}$.

Recall that any poset can naturally be viewed as a category, with a unique morphism $I\rightarrow J$ precisely when $I\le J$.
Let $\CstAlg_{\fd}$ denote the category of finite-dimensional C$^*$-algebras whose morphisms are unital inclusions.
Then we have the following definition.

\begin{definition}
An \emph{abstract spin chain} is a functor $A_{\bullet}\colon \mathcal{I}\rightarrow \CstAlg_{\fd}$ such that for any intervals $I\cap J=\varnothing$, $[A_{I},A_{J}]=0$ in $A_{K}$ for $I\cup J\subset K$.
\end{definition}

Associated to an abstract spin chain is the \emph{quasi-local} C$^*$-algebra $A = \lim_{\mathcal{I}} A_{I}$.
This is a unital AF C$^*$-algebra.
Since we have assumed our connecting maps are inclusions, we can identify each $A_{I}$ as a subalgebra of $A$, with $I\subseteq J$ implying $A_{I}\subseteq A_{J}$ as a C$^*$-subalgebra.

For any subset $F\subseteq \bZ$, we can define the subalgebra $A_{F}\subseteq A$ by
\[
A_{F}=\CstAlg \set{ x\in A_{G} | G\subseteq F}.
\]

We now introduce some useful properties abstract spin systems can have, which we view as ``regularity'' conditions.
Similar to the situation for conformal nets, there are many regularity conditions.
In the theorems below we will use combinations of these properties as hypotheses, so it will be convenient to give them names.
Many are extensions of familiar properties from the theory of conformal nets~\cite{Frolich-Gabbiani}.

\bigskip

\begin{condition}[Weak algebraic Haag duality]\label{cond:Haag}
There are $K, R\ge 0$ such that for every interval $I$ of length at least $K$, $A^{\prime}_{I^{c}}\cap A\subset A_{I^{+R}}$.
If we can choose $R=0$, we say $A$ satisfies \emph{algebraic Haag duality}.
\end{condition}

\begin{condition}[Covering property]\label{cond:generation}
There is an $L>0$ such that if $I,J$ are intervals whose intersection contains an interval of length $L$, then $A_{I\cup J}=A_{I}\vee A_{J}$.
\end{condition}

\begin{condition}[Strong simplicity]\label{cond:simplicity}
The quasi-local algebra $A$ is simple with unique trace.
\end{condition}

\begin{condition}\label{cond:B0-is-simple}
The centers of the half-line algebras $A_{(-\infty,a]}$ and $A_{[a,\infty)}$ are $1$-dimensional for all $a\in \bZ$.
\end{condition}

\subsection{DHR bimodules}

If $A$ is a C$^*$-algebra, recall that a correspondence over $A$ is a right Hilbert $A$ module $X$ together with a (non-degenerate) homomorphism from $A$ to the adjointable operators on $X$ (see~\cite{MR4419534} for a detailed exposition).
We usually think of a correspondence as an (algebraic) $A$-$A$ bimodule $X$ equipped with a right $A$-valued inner product $\braket{\xi|\eta}$ for $\xi, \eta \in X$, which is conjugate linear in first variable and right $A$-linear in the second.

The collection of all correspondences over $A$ forms a C$^*$-tensor category we call $\Corr(A)$, whose objects are correspondences over $A$ and morphisms are adjointable bimodule intertwiners.
The monoidal product is the relative tensor product, denoted
\[
X\boxtimes_{A}Y.
\]

A \emph{projective basis} of a correspondence is a finite sequence $(\xi_{i})^{n}_{i=1}\subseteq X$ such that
\begin{equation}\label{eq:projective-basis-cond}
x=\sum_{i} \xi_{i}\braket{\xi_{i} | x}
\end{equation}
holds for all $x\in X$.
Note that the existence of a projective basis only depends on $X$ as a right Hilbert $A$ module, and not on the structure of the left action.
It is not hard to show that the existence of a projective basis is equivalent to $X$ being finitely generated projective as an (algebraic) right $A$-module.

We can use projective bases to make the connection with algebraic quantum field theory (AQFT), generalizing the Doplicher--Haag--Roberts (DHR) theory approach to superselection sectors.
In AQFT, the DHR C$^*$-tensor category can be defined in terms of localized, transportable endomorphisms of the quasi-local algebra.
An endomorphism $\rho$ is localized in a small region $F$ if $\rho(a)=a$ for all $a$ localized in the complement of $F$.
Transportability is defined by the property that for any small region $F$, $\rho$ is unitarily conjugate to an endomorphism localized there.

To translate this into correspondences, we note that an endomorphism $\rho\colon A\rightarrow A$ yields a correspondence $_{\rho} A$, which is $A$ as a right Hilbert $A$ module, with left action given by $\rho$.
However, this abstract bimodule does not recover the endomorphism itself, and thus we cannot a-priori make sense of localized, transportable bimodules.
However, if we choose the 1-element projective basis $1\in {}_{\rho} A$, then we recover $\rho$ as the matrix coefficient of the Hilbert module $\rho(a)=\braket{ 1 | a\cdot 1 }$.
Furthermore, if we pick any unitary $u\in {}_{\rho}A$, we see that that $\Ad(u)\circ \rho=\braket{ u | a \cdot u}$.
Thus the correspondence picture allows us to collect all unitary conjugacy classes of an endomorphism into one object, and recover them by picking (unitary) projective bases.

The idea behind DHR bimodules is to focus on this later point, but keep it a bit more general by not requiring endomorphisms (encoded by the special type of projective basis), but more generally \emph{amplimorphisms} encoded by arbitrary projective bases.
An amplimorphism is a (not necessarily unital) homomorphism $\pi\colon A\rightarrow M_{n}(A)$.
This is required to capture the full level of generality of super-selection theory, since in the setting of abstract spin chains, the local algebras are finite dimensional, and in particular are not purely infinite.
The idea to apply this to abstract spin chains has its origins in the earlier work of~\cite{MR1463825}, but was more developed recently in~\cite{2304.00068}.

We now give a formal definition of DHR bimodules, following~\cite{2304.00068}.
We denote the category of \emph{dualizable} C$^*$-correspondences over $A$ by $\Corr^d(A)$.

\begin{definition}
Let $A_{\bullet}$ be an abstract spin chain and $X$ a dualizable correspondence over the quasi-local algebra $A$.
Given a subset $F\subseteq \bZ$, we say a vector $\xi \in X$ is \emph{localized in} $F$ if we have
\[
a\xi_{i}=\xi_{i}a \quad \text{for all }a \in A_{F^{c}}.
\]
The bimodule $X$ is a called a \emph{DHR bimodule} if there exists an $r\ge 0$ such that for any interval $I$ of length at least $r$, there is a projective basis $(\xi_{i})_i$ for $X$ localized in $I$.
The full C$^*$-tensor subcategory of $\Corr^d(A)$ consisting of DHR bimodules is denoted $\DHR(A_{\bullet})$.
\end{definition}

Under Condition~\ref{cond:Haag}, the tensor category $\DHR(A_{\bullet})$ is braided~\cite{2304.00068}*{Theorem 3.13}.
The braiding can be explicitly described as follows.
For two DHR bimodules $X$ and $Y$, pick projective bases $\{\xi_{i}\}\subseteq X$ and $\{\eta_{j}\}\subseteq Y$ localized in $I$ and $J$ respectively, where $I$ and $J$ are sufficiently large intervals (whose precise size depends on the structure constants for $X$ and $Y$ respectively) with $I<<J$ (again the precise requirements depend on the constants from weak algebraic Haag duality).
Then $\{\xi_{i}\boxtimes_{A} \eta_{j}\}$ forms a projective basis for $X\boxtimes_{A}Y$, and the braiding is given by
\[
\beta_{X,Y}(\xi_{i}\boxtimes_{A} \nu_{j}) = \nu_{j}\boxtimes_{A} \xi_{i}\in Y\boxtimes_{A} X.
\]
This indeed gives a well-defined intertwiner of correspondences, does not depend on the choice of bases, and satisfies the hexagon equation that characterizes a braiding on a monoidal category.

\subsection{Charge transporters}

In this section, we introduce an analogue of the notion of `charge transporter'~\cite{MR1721563} from algebraic quantum field theory.
We note that charge transporters are essentially formalizations of ``patch operators'' used in topological holography~\cites{inamura202321dsymmetrytopologicalorderlocalsymmetric, PhysRevB.107.155136}.
Fix a DHR bimodule $X$ over $A_\bullet$.
Suppose that $F_1$, $F_2$ are finite subsets of $\bZ$, such that we have bases $(\xi^k_i)_i$ localized in $F_k$ for $k = 1, 2$.
We then put
\[
t_{i j} = \braket{ \xi^1_i | \xi^2_j } \in A.
\]
We thus have
\begin{equation}\label{eq:charge-transp-base-change}
\sum_i \xi^1_i t_{i j} = \xi^2_j
\end{equation}
for all $j$.
We note $t_{i j}^* = \braket{ \xi_j^2 | \xi_i^1 }$, hence
\begin{equation}\label{eq:charge-transp-base-change-adj}
\sum_j \xi_j^2 t_{i j}^* = \xi_i^1
\end{equation}
also holds.
In the following we write $T = [t_{i j}]_{i, j} \in M_{n_1\times n_{2}}(A)$, where $n_{1}$ and $n_{2}$ are the sizes of the bases localized in $F_{1}$ and $F_{2}$ respectively.

\begin{proposition}\label{prop:interaction-T-and-P1}
Consider the matrix $P_{1} = [p^{(1)}_{i j}]_{i, j} \in M_{n_1}(A)$ given by $p^{(1)}_{i j} = \braket{ \xi^1_i | \xi^1_j }$.
We then have $T T^* = P_{1}$ and $P_{1} T = T$, that is,
\begin{align*}
\sum_j t_{i j} t_{k j}^* &= \braket{ \xi^1_i | \xi^1_k },&
t_{i j} &= \sum_k \braket{ \xi^1_i | \xi^1_k } t_{k j}.
\end{align*}
\end{proposition}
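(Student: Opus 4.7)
The plan is to derive both identities directly from the defining projective basis identity \eqref{eq:projective-basis-cond} applied to appropriate vectors, combined with right $A$-linearity of the inner product in its second slot. There is no real obstacle here; the content is bookkeeping.

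For $P_{1} T = T$, I would start from the right-hand side and recognize the expression $\sum_k \xi^1_k t_{k j}$ appearing inside the inner product. Using right $A$-linearity, one can write
\[
\sum_k \braket{ \xi^1_i | \xi^1_k } t_{k j} = \sum_k \braket{ \xi^1_i | \xi^1_k t_{k j} } = \braket[\bigg]{ \xi^1_i \bigg| \sum_k \xi^1_k t_{k j} }.
\]
By \eqref{eq:charge-transp-base-change}, the right entry equals $\xi^2_j$, hence the whole expression reduces to $\braket{\xi^1_i | \xi^2_j} = t_{i j}$, which is exactly the desired identity.

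For $T T^* = P_{1}$, the argument is symmetric: rewrite
\[
\sum_j t_{i j} t_{k j}^* = \sum_j \braket{ \xi^1_i | \xi^2_j } \braket{ \xi^2_j | \xi^1_k } = \sum_j \braket{ \xi^1_i | \xi^2_j \braket{ \xi^2_j | \xi^1_k } } = \braket[\bigg]{ \xi^1_i \bigg| \sum_j \xi^2_j \braket{ \xi^2_j | \xi^1_k } },
\]
again by right $A$-linearity. Applying the projective basis identity \eqref{eq:projective-basis-cond} with the basis $(\xi^2_j)_j$ to the element $x = \xi^1_k \in X$ collapses the inner sum to $\xi^1_k$, producing $\braket{\xi^1_i | \xi^1_k} = p^{(1)}_{i k}$.

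Both calculations rely only on \eqref{eq:projective-basis-cond} for the bases of $X$ (not on any spatial localization of $\xi^1, \xi^2$, nor on the DHR condition or Haag duality), so the proposition is essentially a formal consequence of having two projective bases for the same correspondence and the matrix $T$ recording the change-of-basis coefficients.
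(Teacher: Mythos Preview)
Your proof is correct and is essentially the same argument the paper gives: the paper's proof consists of the single sentence ``This follows from $a\braket{\xi|\eta}=\braket{\xi a^*|\eta}$ and~\eqref{eq:projective-basis-cond},'' and your computation simply spells this out (using the equivalent second-slot linearity $\braket{\xi|\eta}a=\braket{\xi|\eta a}$ instead). There is no substantive difference in approach.
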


\begin{proof}
This follows from
\[
a \braket{ \xi | \eta } = \braket{ \xi a^* | \eta }
\]
and~\eqref{eq:projective-basis-cond}.
\end{proof}

A similar computation demonstrates the following.

\begin{proposition}\label{prop:t-star-t-and-right-support-of-t}
Consider the matrix $P_{2} \in M_{n_2}(A)$ given by $P_{2} = [p^{(2)}_{i j}]_{i, j} \in M_{n_1}(A)$, where $p^{(2)}_{i j} = \braket{ \xi^2_i | \xi^2_j }$.
Then we have $T^* T = P_2$ and $T P_2 = T$, that is, we have
\begin{align*}
\sum_j t_{j i}^* t_{j k} &= \braket{ \xi^2_i | \xi^2_k },&
t_{i k} &= \sum_j t_{i j} \braket{ \xi^2_j | \xi^2_k }\,.
\end{align*}
\end{proposition}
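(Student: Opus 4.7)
The plan is to mirror the proof of Proposition~\ref{prop:interaction-T-and-P1} but swap the roles of the two bases, using right $A$-linearity of the inner product in the second variable (rather than conjugate-linearity in the first) together with the defining property~\eqref{eq:projective-basis-cond} of a projective basis. Concretely, for a correspondence $X$ over $A$ the inner product satisfies $\braket{\xi|\eta a} = \braket{\xi|\eta} a$, so products of the form $\braket{\xi|\eta}\braket{\eta'|\zeta}$ can be absorbed into the second slot.

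For the identity $T^* T = P_2$, I would compute the $(i,k)$-entry of $T^* T$ as
\[
\sum_j t_{j i}^* t_{j k} = \sum_j \braket{\xi^2_i | \xi^1_j}\braket{\xi^1_j | \xi^2_k},
\]
using $t_{ji}^* = \braket{\xi^2_i | \xi^1_j}$. Then right $A$-linearity in the second slot turns this into $\braket{\xi^2_i \mid \sum_j \xi^1_j \braket{\xi^1_j | \xi^2_k}}$, and the projective basis property of $(\xi^1_j)_j$ applied to the vector $\xi^2_k$ collapses the inner sum to $\xi^2_k$, yielding $\braket{\xi^2_i | \xi^2_k} = p^{(2)}_{i k}$.

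For $T P_2 = T$ the argument is the same, but now we exploit the projective basis property of $(\xi^2_j)_j$. The $(i,k)$-entry of $T P_2$ is $\sum_j t_{i j} \braket{\xi^2_j | \xi^2_k} = \sum_j \braket{\xi^1_i | \xi^2_j}\braket{\xi^2_j | \xi^2_k}$, which by right $A$-linearity equals $\braket{\xi^1_i \mid \sum_j \xi^2_j \braket{\xi^2_j | \xi^2_k}} = \braket{\xi^1_i | \xi^2_k} = t_{i k}$.

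There is essentially no obstacle: the two identities are formally dual to those in Proposition~\ref{prop:interaction-T-and-P1}, with the only bookkeeping point being to pay attention to which of the two bases is being resolved by~\eqref{eq:projective-basis-cond} in each identity. No use of the DHR hypothesis or of any localization of the bases is needed; the statement is purely about interchanging projective bases of a single correspondence via the Gram-type matrices $P_1$, $P_2$ and the transition matrix $T$.
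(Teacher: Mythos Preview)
Your proof is correct and follows essentially the same approach as the paper, which simply states ``A similar computation demonstrates the following'' in reference to Proposition~\ref{prop:interaction-T-and-P1}. The only cosmetic difference is that you phrase the key inner-product identity as right $A$-linearity in the second slot, whereas the paper's proof of Proposition~\ref{prop:interaction-T-and-P1} uses the equivalent form $a\braket{\xi|\eta} = \braket{\xi a^*|\eta}$; either way one is just absorbing a coefficient into the inner product and invoking~\eqref{eq:projective-basis-cond}.
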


Let $X$ and $Y$ be DHR bimodules over $A_\bullet$.
Let us recall some facts about the braiding
\[
\beta_{X, Y} \colon X \boxtimes_A Y \to Y \boxtimes_A X.
\]

Choose finite subsets $F_1 < F_3 < F_2$ of $\bZ$, where $<$ refers to the order on $\bZ$ rather than containment, such that $X$ has bases $(\xi^1_i)_i$ and $(\xi^2_j)_j$ localized in $F_1$ and $F_2$ respectively, and $Y$ has a basis $(\eta_k)_k$ localized in $F_3$.
Then by the independence of basis choice in defining the braiding, we can express $\beta_{X, Y}$ and $\beta_{Y, X}$ as $A$-linear extensions of the maps defined on bases by
\begin{align*}
\beta_{X, Y}(\xi^1_i \boxtimes \eta_k) &= \eta_k \boxtimes \xi^1_i,&
\beta_{Y, X}(\eta_k \boxtimes \xi^2_j) &= \xi^2_j \boxtimes \eta_k.
\end{align*}

\begin{proposition}\label{prop:monodromy-formula}
The monodromy $\beta_{Y, X} \beta_{X, Y} \colon X \boxtimes_A Y \to X \boxtimes_A Y$ is given by
\begin{equation}\label{eq:monodromy-formula}
\beta_{Y, X} \beta_{X, Y}(\xi^1_i \boxtimes \eta_k) = \sum_{l, j} \xi^1_{l} \boxtimes t_{l j} \eta_k t_{i j}^*.
\end{equation}
\end{proposition}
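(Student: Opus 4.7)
The strategy is a direct computation, threading the two braidings through the three localization regions $F_1 < F_3 < F_2$ and converting between the two bases of $X$ via the charge transporters.

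First I would apply $\beta_{X,Y}$ using the basis $(\xi^1_i)_i$, which is admissible because $F_1$ lies to the left of $F_3$; this gives
\[
\beta_{X,Y}(\xi^1_i \boxtimes \eta_k) = \eta_k \boxtimes \xi^1_i.
\]
To apply $\beta_{Y,X}$ next, I need a basis of $X$ localized to the \emph{right} of $F_3$, so $\xi^1_i$ itself cannot be fed into the defining formula for $\beta_{Y,X}$. The natural move is to re-expand $\xi^1_i$ in the other basis using the adjoint charge-transporter identity~\eqref{eq:charge-transp-base-change-adj}: $\xi^1_i = \sum_j \xi^2_j\, t_{ij}^*$.

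Substituting and using $A$-bilinearity of the balanced tensor product (i.e.\ $(\eta \boxtimes x)a = \eta \boxtimes xa$ and $\eta a \boxtimes x = \eta \boxtimes ax$, for $\eta\in Y$, $x\in X$, $a\in A$), I get
\[
\eta_k \boxtimes \xi^1_i = \sum_j \eta_k \boxtimes \xi^2_j t_{ij}^* = \sum_j (\eta_k \boxtimes \xi^2_j)\, t_{ij}^*.
\]
Now $\beta_{Y,X}$ is an $A$-bimodule intertwiner, so I can pull $t_{ij}^*$ out, apply the defining formula $\beta_{Y,X}(\eta_k \boxtimes \xi^2_j) = \xi^2_j \boxtimes \eta_k$ (legal because $F_2$ lies to the right of $F_3$), and reinsert $t_{ij}^*$:
\[
\beta_{Y,X}\bigl(\eta_k \boxtimes \xi^1_i\bigr) = \sum_j (\xi^2_j \boxtimes \eta_k)\, t_{ij}^* = \sum_j \xi^2_j \boxtimes \eta_k t_{ij}^*.
\]

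Finally, to land the result back in the first basis I use~\eqref{eq:charge-transp-base-change}, namely $\xi^2_j = \sum_l \xi^1_l t_{lj}$, and once more push $t_{lj}$ through $\boxtimes_A$:
\[
\sum_j \xi^2_j \boxtimes \eta_k t_{ij}^* = \sum_{l,j} \xi^1_l t_{lj} \boxtimes \eta_k t_{ij}^* = \sum_{l,j} \xi^1_l \boxtimes t_{lj} \eta_k t_{ij}^*,
\]
which is exactly~\eqref{eq:monodromy-formula}. There is no genuine obstacle here; the only thing to watch carefully is the side of each $A$-action and the order of factors when moving across the balanced tensor product. The conceptual content is entirely encoded in the fact that the charge transporters $T$ and $T^*$ mediate between the two localized presentations of $X$, so conjugating by them is exactly what the monodromy records.
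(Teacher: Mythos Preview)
Your proof is correct and follows essentially the same approach as the paper: apply $\beta_{X,Y}$, convert $\xi^1_i$ to the $(\xi^2_j)_j$ basis via~\eqref{eq:charge-transp-base-change-adj}, apply $\beta_{Y,X}$, then convert back via~\eqref{eq:charge-transp-base-change}. The paper's version is more terse, but the logical steps are identical.
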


\begin{proof}
Using~\eqref{eq:charge-transp-base-change-adj}, we write
\[
\beta_{X, Y}(\xi^1_i \boxtimes \eta_k) = \eta_k \boxtimes \xi^1_i = \sum_j \eta_k \boxtimes \xi^2_j t_{i j}^*.
\]
Then the effect of the $A$-module homomorphism $\beta_{Y, X}$ becomes
\[
\sum_j \xi^2_j \boxtimes \eta_k t_{i j}^*.
\]
Then using~\eqref{eq:charge-transp-base-change}, we get the right hand side of~\eqref{eq:monodromy-formula}.
\end{proof}

\section{Distinguished algebra objects}\label{sec:dist-alg-objs}

\subsection{Lagrangian algebra in the Drinfeld center}

Let $\cC$ be a unitary fusion category, and let us denote the set of its irreducible classes by $\Irr(\cC)$.
Given $i \in \Irr(\cC)$, we denote its representative by $U_i$.
We also write $0$ for the class of monoidal unit, so that we have $U_0 = 1_\cC$.

We denote the (unitary) \emph{Drinfeld center} of $\cC$ by $\cZ(\cC)$ (for example, see~\cite{MR1966525}).
Its objects are pairs $(Z, c)$, consisting of an object $Z\in\cC$ together with a half braiding, that is, a natural unitary isomorphism
\[
c_X \colon Z \otimes X \to X \otimes Z \qquad (X \in \cC).
\]
that fulfills the hexagon axiom.
Morphisms in $\cZ(\cC)$ are morphisms in $\cC$ between the underlying objects that commute with the respective half braidings.
The monoidal product of $\cC$ induces a monoidal structure on $\cZ(\cC)$ where the half braiding of a product $Z \otimes Z'$ is given by the composition of the respective half braidings.
Moreover, $\cZ(\cC)$ is naturally unitarily braided with braiding
\begin{equation}\label{eq:braiding_center}
\beta_{(Z, c), (Z', c')} = c_{Z'}\colon Z \otimes Z' \to Z' \otimes Z.
\end{equation}
for objects $(Z, c)$ and $(Z', c')$ in $\cZ(\cC)$.

Recall that there is a distinguished object $(Z^\reg, c^\reg)$ in $\cZ(\cC)$, given by
\begin{equation}\label{eq:reg-hlf-br-obj}
Z^\reg = \bigoplus_{i \in \Irr(\cC)} U_i^* \otimes U_i.
\end{equation}
Let us recall the unitary half-braiding $c^\reg$, cf.~\cite{MR3509018}*{Section 3.2}.
Given $X \in \cC$ and indexes $i$ and $j$ for $\Irr(\cC)$, we take a family of morphisms $v^{i j}_\alpha\colon U_i \otimes X \to U_j$ such that $\sum_{\alpha} v^{i j *}_\alpha v^{i j}_\alpha$ is the orthogonal projection onto the $j$-th isotypic summand of $U_i \otimes X$.
Then we define
\[
c_{X, i j} \colon U_i^* \otimes U_i \otimes X \to X \otimes X^* \otimes U_i^* \otimes U_i \otimes X \to X \otimes U_j^* \otimes U_j
\]
by setting
\[
c_{X, i j} = \sqrt{\frac{d_i}{d_j}} \sum_\alpha (\id_X \otimes v^{i j * \vee}_\alpha \otimes v^{i j}_\alpha) (\bar{R}_X \otimes \id_{U_i^* \otimes U_i \otimes X}),
\]
where $d_i$ is the categorical dimension of $U_i$, and $v^{i j * \vee}_\alpha \colon X^* \otimes U_i^* \to U_j^*$ is the morphism we obtain by composing $v^{i j *}_\alpha$ with the standard solutions of conjugate equations for $X$, $U_i$, and $U_j$.
If we collect $c_{X, i j}$ as a morphism $Z^\reg \otimes X \to X \otimes Z^\reg$, it is unitary, and satisfies the consistency conditions for half-braiding.

We can detect the triviality of the Müger center of $\cZ(\cC)$ by testing it against $(Z^\reg, c^\reg)$, as follows.

\begin{proposition}\label{prop:comm-with-reg-hlf-br-impl-triv}
Let $(Z, c)$ be an object of $\cZ(\cC)$ such that $\beta_{Z^\reg, Z} \beta_{Z, Z^\reg} = \id_{Z \otimes Z^\reg}$.
Then we have $Z \simeq 1_\cC^{\oplus k}$ for some integer $k$.
\end{proposition}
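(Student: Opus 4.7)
The plan is to use the fact that $Z^\reg$ is the canonical Lagrangian algebra in $\cZ(\cC)$: its category of local (dyslectic) $Z^\reg$-modules inside $\cZ(\cC)$ is equivalent to $\mathrm{Vec}$, so every local $Z^\reg$-module is a direct sum of copies of $Z^\reg$ itself, with multiplicities controlled by the adjunction $F \dashv I$ between the forgetful functor $F \colon \cZ(\cC) \to \cC$ and its right adjoint $I$, under which $I(1_\cC) \simeq Z^\reg$ as a connected commutative algebra.

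The computational step is to show that the free right $Z^\reg$-module $(Z \otimes Z^\reg, \id_Z \otimes m)$ is local. Using the hexagon relations, the monodromy $\beta_{Z^\reg, Z \otimes Z^\reg} \circ \beta_{Z \otimes Z^\reg, Z^\reg}$ factors into a product in which the cross term $\beta_{Z^\reg, Z} \circ \beta_{Z, Z^\reg}$ collapses to the identity by hypothesis, leaving the operator $\id_Z \otimes \beta_{Z^\reg, Z^\reg}^2$. Post-composing with $\id_Z \otimes m$ and invoking the braided commutativity $m \circ \beta_{Z^\reg, Z^\reg} = m$ of $Z^\reg$ (so that $m \circ \beta_{Z^\reg, Z^\reg}^2 = m$) gives exactly the locality condition on the action.

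The Lagrangian property then yields $Z \otimes Z^\reg \simeq (Z^\reg)^{\oplus k}$ in $\cZ(\cC)$, where
\[
k \,=\, \dim \Hom_{\cZ(\cC)}(Z, Z^\reg) \,=\, \dim \Hom_\cC(Z, 1_\cC)
\]
by the free--forgetful adjunction for $Z^\reg$-modules together with $F \dashv I$. Applying $F$ and writing $Z \simeq \bigoplus_j n_j U_j$ in $\cC$, I read off $n_0 = k$, while comparing categorical dimensions gives $\sum_j n_j d_j = \dim Z = k$; since each $d_j \geq 1$, this forces $n_j = 0$ for $j \neq 0$, so $Z \simeq 1_\cC^{\oplus k}$.

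The main obstacle is justifying the Lagrangian property of $Z^\reg$ in the unitary fusion setting --- either by citing it from standard references on commutative algebras in modular tensor categories, or else by giving a short self-contained argument using the Frobenius algebra structure of $Z^\reg$ together with the identity $\dim Z^\reg = \dim \cC$.
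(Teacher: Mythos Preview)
Your argument is correct, but it takes a genuinely different route from the paper.

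The paper's proof is a direct two-line computation with the explicit half-braiding $c^\reg$: the hypothesis gives $\beta_{Z^\reg,Z}=\beta_{Z,Z^\reg}^{-1}$, and one restricts both sides to the $U_0^*\otimes U_0$-summand of $Z^\reg$. The right-hand side $c_{Z^\reg}^{-1}$ is block-diagonal in the decomposition $Z^\reg=\bigoplus_i U_i^*\otimes U_i$, so on that summand it is simply $\id_Z$ landing back in the $U_0$-summand. The left-hand side $c^\reg_Z$, by the defining formula, has a nonzero component into $Z\otimes U_j^*\otimes U_j$ for every irreducible $U_j$ occurring in $Z$. Equality forces these off-diagonal components to vanish, hence $Z$ contains no $U_j$ with $j\neq 0$.

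Your approach instead packages the hypothesis as locality of the free module $Z\otimes Z^\reg$, then invokes the Lagrangian property of $Z^\reg$ (triviality of its local module category), the free--forgetful and $F\dashv I$ adjunctions, and a dimension count. This is more structural and explains \emph{why} the statement holds, but it imports substantially more machinery: modularity of $\cZ(\cC)$ and the theory of commutative algebras and local modules therein. The paper's argument is elementary and self-contained, which is not incidental: the proposition is stated precisely to motivate the parallel arguments for $\DHR(A_\bullet)$ (Theorem~\ref{thm:trivial-bimod-characterization} and Proposition~\ref{prop:B0-central-vector-in-bimod-with-triv-monodromy}), where one has the analogue of $c^\reg$ (the braiding with $B_1$) but no off-the-shelf Lagrangian theory to cite. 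Your proof is fine as a standalone result, but it would not transport to that setting in the way the paper's does.
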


\begin{proof}
The morphism $\beta_{Z, Z^\reg}^{-1} = c_{Z^\reg}^{-1}$ respects the direct sum decomposition~\eqref{eq:reg-hlf-br-obj}.
In particular, for the direct summand $U_0 = 1_\cC$, we get the identity morphism of $Z$.
On the other hand, if we look at $\beta_{Z^\reg, Z} = c^\reg_Z$ out of the summand $U_0^* \otimes U_0$, we end up in the summands $Z \otimes U_j^* \otimes U_j$ for all $U_j$ contained in $Z$.
From $\beta_{Z^\reg, Z} = \beta_{Z, Z^\reg}^{-1}$, we get the claim.
\end{proof}

\begin{remark}\label{rem:hlf-br-on-triv-by-grading}
Thus, $(Z, c)$ belongs to the symmetric subcategory of $\cZ(\cC)$ formed by the objects isomorphic to $(1_\cC^{\oplus k}, c)$.
This category is monoidally equivalent to $\Rep \Gamma_\cC$, where $\Gamma_\cC$ is the universal grading group of $\cC$, see~\cite{MR4498161}*{Theorem 3.23}.
For example, any element $\phi$ of the character group $\hat{\Gamma}_\cC = \Hom(\Gamma_\cC, \bT)$ defines a half braiding $c^\phi$ on $1_\cC$ (up to the identification of $1_\cC \otimes X$ and $X \otimes 1_\cC$ with $X$) by
\[
c^\phi_X = \omega([X]) \id_X \colon X \to X
\]
for irreducible $X$, where $[X]$ is the element of $\Gamma$ represented by $X$.
\end{remark}

\subsection{Half-line algebras and Jones's basic extension}

We now introduce a condition on abstract spin chains that allows us to conclude finiteness of the DHR bimodule category.
First, consider the algebras
\begin{align*}
A_- &= A_{(-\infty, -1]},&
A_+ &= A_{[0, \infty)},&
B_0 &= A_- \otimes A_+,
\end{align*}
which are all C$^*$-subalgebras of the quasi-local algebra $A$.

Recall that a conditional expectation $E \colon C \to D$ for an inclusion of C$^*$-algebras $D \subset C$ is of \emph{finite index} if there is a positive constant $\lambda$ such that $E(a) \ge \lambda a$ holds for all positive $a \in C$.

\begin{condition}[Rationality]\label{cond:rationality}
We say that $A_{\bullet}$ is rational if there is a faithful conditional expectation $E\colon A \to B_0$ of finite index (in the sense of Watatani~\cite{MR996807}).    
\end{condition}

This allows us to consider the basic extension
\[
A \subset B_1 = \langle A, e \rangle,
\]
and the dual expectation $E_1 \colon B_1 \to A$.
In particular, $B_1$ admits an $A$-valued inner product induced by $E_1$.

\section{Half-line categories}\label{sec:half-line-category}

Since we can think of $B_1$ as $A \boxtimes_{B_0} A$ (see~\cite{MR996807}), given $A$-bimodules $X$ and $Y$, the space of $B_1$-module maps between $B_1 \boxtimes_A X$ and $B_1 \boxtimes_A Y$ (considered in the category of $A$-bimodules) is equivalent to the space of maps of $B_0$-$A$-modules between $X$ and $Y$.
We then look for a better description of the latter morphism system.

We assume that $A_\bullet$ satisfies weak algebraic Haag duality (Condition~\ref{cond:Haag}) and the covering property (Condition~\ref{cond:generation}).

\subsection{General setting}

\begin{proposition}\label{prop:X-minus-as-Hilb-bimod}
Let $X$ be a DHR bimodule over $A_\bullet$, and $(\xi_j)_j$ be a projective basis for $X$ localized in some finite subset of $(-\infty, -R-L-1]$, where $R$ and $L$ are the controlling constants respectively from Conditions~\ref{cond:Haag} and~\ref{cond:generation}.
Then the span $X_- = \bigvee_j \xi_j A_- \subset X$ is an $A_-$-$A_-$-correpondence, independent of the choice of $(\xi_i)_i$.
\end{proposition}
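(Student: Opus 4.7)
My plan is to verify the three ingredients of an $A_-$-$A_-$-correspondence structure in turn---the $A_-$-valued inner product, the right $A_-$-action, and the left $A_-$-action---and to obtain basis independence as a byproduct of the same base-change computation. The key observation driving the whole argument is the following: whenever a projective basis $(\xi_j)_j$ is localized in a bounded interval $F \subseteq (-\infty, -R-L-1]$ of length at least $K$ (which I will always arrange by enlarging the localization region, since localization in $F$ implies localization in any $F' \supseteq F$), any $A$-valued quantity built from these vectors which commutes with $A_{F^c}$ must, by weak algebraic Haag duality, lie in $A_{F^{+R}} \subseteq A_{(-\infty, -L-1]} \subseteq A_-$. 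The buffer $-R-L-1$ in the hypothesis is tailored precisely to ensure this output stays in $A_-$.

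I will first apply this observation to the inner products of basis vectors themselves. Since each $\xi_i$, $\xi_j$ commutes with $A_{F^c}$, so does $\langle \xi_i | \xi_j \rangle$, hence $\langle \xi_i | \xi_j \rangle \in A_-$. Extending conjugate-linearly on the left and $A_-$-linearly on the right gives $\langle \xi_j a | \xi_k b \rangle = a^* \langle \xi_j | \xi_k \rangle b \in A_-$ for $a, b \in A_-$, so the $A$-valued inner product on $X$ restricts to an $A_-$-valued inner product on $X_-$. The right $A_-$-action on $X_-$ is clear from the definition of the span.

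For the left action, the strategy will be transport of basis. Given $a \in A_{[-N, -1]}$ with $N \ge R+L+1$ (the case $N \le R+L$ is trivial, as $a$ then lies in $A_{[-R-L,-1]}$, commutes with each $\xi_j$, and $a\xi_j = \xi_j a \in X_-$), I would pick a second projective basis $(\xi^a_k)_k$ localized in an interval $F^a = [-M', -N-1] \subseteq (-\infty, -R-L-1]$ sitting strictly below $-N$, with $M'$ large enough that $F \cup F^a$ is contained in a single interval of length at least $K$. By construction $a$ commutes with each $\xi^a_k$, and the key observation applied to the enclosing interval places both families of base-change coefficients $\langle \xi^a_k | \xi_j \rangle$ and $\langle \xi_j | \xi^a_k \rangle$ in $A_-$. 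The projective basis expansion then gives
\[
a \xi_j = \sum_k a \xi^a_k \cdot \langle \xi^a_k | \xi_j \rangle = \sum_k \xi^a_k \cdot a \langle \xi^a_k | \xi_j \rangle \in \bigvee_k \xi^a_k A_-,
\]
and the reverse base change $\xi^a_k = \sum_j \xi_j \langle \xi_j | \xi^a_k \rangle$ yields $\bigvee_k \xi^a_k A_- \subseteq \bigvee_j \xi_j A_- = X_-$. Hence $a\xi_j \in X_-$, and by norm-density of $\bigcup_N A_{[-N,-1]}$ in $A_-$ plus continuity, the left $A_-$-action preserves $X_-$.

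Basis independence then follows from the same base-change argument applied to any two projective bases $(\xi_j)_j$ and $(\xi'_l)_l$ localized in finite subsets of $(-\infty, -R-L-1]$: enlarging both localization sets to a common sufficiently long interval inside $(-\infty, -R-L-1]$ places all the mutual inner products in $A_-$, so $\bigvee_l \xi'_l A_- = \bigvee_j \xi_j A_-$. The one bookkeeping obstacle I foresee is ensuring that every localization interval that appears in the proof simultaneously lies inside $(-\infty, -R-L-1]$ and has length at least $K$; once both conditions hold, the $-R-L-1$ buffer precisely guarantees that the output of Haag duality stays in $A_-$ rather than spilling into the positive half-line.
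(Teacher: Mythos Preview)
Your proof is correct. The treatment of the $A_-$-valued inner product and of basis independence matches the paper's argument essentially verbatim. For the left $A_-$-action, however, you take a genuinely different route. The paper invokes the covering property (Condition~\ref{cond:generation}) to write $A_-$ as generated by $A_{(-\infty,-R-1]}$ and $A_{[-R-L,-1]}$, and then handles each generator separately: elements of $A_{[-R-L,-1]}$ commute with the basis vectors directly, while for $a \in A_{(-\infty,-R-1]}$ one argues that $\langle\xi_j \mid a\xi_i\rangle$ commutes with $A_{[-R,\infty)}$ and appeals to Haag duality again. You instead transport the basis to a region strictly to the left of the support of $a$, so that $a$ commutes with the \emph{new} basis vectors, and then return to $X_-$ via base-change coefficients that Haag duality places in $A_-$.

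Your approach has the pleasant feature that it does not actually use the covering property at all---the constant $L$ appears only as a harmless buffer in the localization hypothesis---and it unifies the left-action step with the basis-independence step into a single base-change computation. The paper's approach, by contrast, keeps the original basis fixed throughout and trades the extra basis-transport for one invocation of Condition~\ref{cond:generation}; this is what makes the buffer $-R-L-1$ (rather than $-R-1$) genuinely necessary in their argument.
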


\begin{proof}
Let us first check that $X_-$ is a Hilbert $A_-$-module, that is, that  the inner product of vectors $\xi, \eta \in X_-$ belongs to $A_-$.

By $\braket{\xi a | \eta b} = a^* \braket{\xi | \eta} b$, we can reduce the claim to the case $\xi = \xi_i$ and $\eta = \xi_j$ for some indexes $i$ and $j$.
By the support assumption these vectors commute with $A_{[-R-1, \infty)}$, hence the same holds for their inner product $\braket{\xi_i | \xi_j}$.
We then obtain $\braket{\xi_i | \xi_j} \in A_{(-\infty,-1]}$ by Condition~\ref{cond:Haag}.

Let us next check that $X_-$ is closed under left multiplication by elements of $A_-$.
From~\eqref{eq:projective-basis-cond} we have
\[
a \xi_i = \sum_j \xi_j \braket{ \xi_j | a \xi_i },
\]
hence we need to check that $\braket{ \xi_j | a \xi_i }$ belongs to $A_-$ whenever $a$ is an element of $A_-$.

By Condition~\ref{cond:generation}, the algebra $A_-$ is generated by $A_{(-\infty, -R-1]}$ and $A_{[-R-L, -1]}$, hence it is enough to check the claim for these algebras.
If we have $a \in A_{[-R-L, -1]}$, then $a$ and $\xi_i$ commute by the support condition and we have $\braket{ \xi_j | a \xi_i } = \braket{ \xi_j | \xi_i } a$.
If we have $a \in A_{(-\infty, -R-1]}$, then $a \xi_i$ still commutes with $A_{[-R, \infty)}$, hence by the above argument we again have $\braket{ \xi_j | a \xi_i } \in A_-$.

Let $(\xi^1_i)_i$ and $(\xi^2_j)_j$ be two bases of $X$ as above, and let us check that they generate the same $A_-$-module.
Consider the charge transporters $t_{i j} = \braket{\xi^1_i | \xi^2_j}$.
By the support condition $t_{i j}$ commutes with $A_{[-R-1, \infty)}$, hence belongs to $A_-$.
This implies that the right $A_-$-module generated by the vectors $\xi^1_i$ contains the one generated by the $\xi^2_j$.
By symmetry we have the converse inclusion.
\end{proof}

\begin{definition}
We define $F_-$ to be the functor $\DHR(A_\bullet) \to \Corr(A_-)$ given by $X \mapsto X_-$.
\end{definition}

\begin{proposition}
The functor $F_- \colon \DHR(A_\bullet) \rightarrow \Corr(A_-)$ is endowed with the structure of a monoidal functor.
\end{proposition}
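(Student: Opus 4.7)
The plan is to equip $F_-$ with a tensor comparison $\mu_{X,Y} \colon F_-(X) \boxtimes_{A_-} F_-(Y) \to F_-(X \boxtimes_A Y)$ and a unit comparison $\iota \colon A_- \to F_-(A)$, then verify that these are unitary, natural, and satisfy the pentagon and triangle axioms. Since $A_- \subset A$, the bilinear map $X_- \times Y_- \to X \boxtimes_A Y$, $(\xi, \eta) \mapsto \xi \boxtimes_A \eta$, is $A_-$-balanced, hence factors through $X_- \boxtimes_{A_-} Y_-$. To see the image lands in $(X \boxtimes_A Y)_-$, I would choose projective bases $(\xi_i)_i$ of $X$ and $(\eta_j)_j$ of $Y$, both localized in a common finite set $F \subset (-\infty, -R-L-1]$. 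For $a \in A_{F^c}$ the vectors $\xi_i$ and $\eta_j$ both commute with $a$, so $a(\xi_i \boxtimes_A \eta_j) = (\xi_i \boxtimes_A \eta_j) a$; a direct computation with~\eqref{eq:projective-basis-cond} shows that $(\xi_i \boxtimes_A \eta_j)_{i,j}$ is itself a projective basis of $X \boxtimes_A Y$ localized in $F$. By Proposition~\ref{prop:X-minus-as-Hilb-bimod}, $(X \boxtimes_A Y)_-$ coincides with the right $A_-$-span of these vectors, while $X_- \boxtimes_{A_-} Y_-$ is generated over $A_-$ by the $\xi_i \boxtimes_{A_-} \eta_j$, so $\mu_{X,Y}$ is well-defined and surjective onto $(X \boxtimes_A Y)_-$.

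For unitarity I would compute inner products on elementary tensors. Using the standard identity $\braket{\xi \boxtimes \eta | \xi' \boxtimes \eta'}_A = \braket{\eta | \braket{\xi | \xi'}_A \eta'}_A$ in $X \boxtimes_A Y$, together with Proposition~\ref{prop:X-minus-as-Hilb-bimod} (which guarantees $\braket{\xi | \xi'}_A \in A_-$ and that the $A$- and $A_-$-valued inner products on $X_-$ coincide), the same formula computes the $A_-$-valued inner product on $X_- \boxtimes_{A_-} Y_-$. Hence $\mu_{X,Y}$ is isometric, and combined with surjectivity, unitary. For the unit comparison, the monoidal unit of $\DHR(A_\bullet)$ is $A$ with its identity bimodule structure; the singleton $\{1\}$ is a projective basis localized in every subset of $\bZ$, so $F_-(A) = 1 \cdot A_- = A_-$, and I take $\iota = \id_{A_-}$.

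Naturality of $\mu$ in both arguments is immediate since on elementary tensors it acts as the identity. The pentagon reduces to associativity of the relative tensor product in $\Corr(A)$: under the standard associators, both composites in the pentagon diagram send $(\xi \boxtimes_{A_-} \eta) \boxtimes_{A_-} \zeta$ to $\xi \boxtimes_A \eta \boxtimes_A \zeta$, and the equality is inherited from $\Corr(A)$. The triangle axioms similarly reduce to the corresponding identities in $\Corr(A)$ via $\iota = \id_{A_-}$. The main obstacle is the first step: one must simultaneously pick projective bases for $X$ and $Y$ deep enough in the negative half line that (a) weak algebraic Haag duality forces the structural inner products into $A_-$, and (b) the term-wise tensors still form a projective basis of $X \boxtimes_A Y$ localized in the same region. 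Once this is secured, the rest of the monoidal structure is formal.
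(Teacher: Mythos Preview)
Your proposal is correct and follows essentially the same approach as the paper: both arguments exhibit the isomorphism between $X_- \boxtimes_{A_-} Y_-$ and $(X \boxtimes_A Y)_-$ by choosing projective bases $(\xi_i)_i$, $(\eta_j)_j$ localized deep in the negative half line, observing that $(\xi_i \boxtimes_A \eta_j)_{i,j}$ is then a localized projective basis for $X \boxtimes_A Y$, and invoking Proposition~\ref{prop:X-minus-as-Hilb-bimod}. The paper writes the comparison map in the direction $(X \boxtimes_A Y)_- \to X_- \boxtimes_{A_-} Y_-$ and leaves the coherence checks as ``routine'', whereas you construct the inverse map and spell out unitarity, the unit comparison, and the pentagon/triangle axioms more explicitly; these differences are cosmetic.
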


\begin{proof}
Let us fix DHR bimodules $X,Y\in \DHR(A_\bullet)$.
Take respective projective basis $(\xi_i)_i$ and $(\eta_j)_j$ localized in subsets of $(-\infty, -1]$, far enough from $-1$ as above.
On one hand, they become bases for $X_-$ and $Y_-$.
On the other the vectors $(\xi_i \otimes \eta_j)_{i, j}$ form a projective basis in $X \boxtimes_A Y$.

Thus, we have a well-defined isomorphism of Hilbert modules $(X \boxtimes_A Y)_{-} \to X_{-}\boxtimes_{A_{-}}Y_{-}$ characterized by
\begin{equation*}
(\xi_i \otimes \eta_j) a \mapsto \xi_i \otimes\eta_j a \quad (a \in A_-).
\end{equation*}
It is routine to check that this is a natural isomorphism of bifunctors required for the structure of monoidal functors.
\end{proof}

As a consequence, the objects $X_-$ form a C$^*$-tensor subcategory of $\Corr(A_-)$.
Let us take the idempotent completion of this category.

\begin{definition}
We define $\cC_-$ as the full C$^*$-tensor subcategory of $\Corr(A_-)$ whose objects are the subobjects of $A_-$-correspondences of the form $X_-$ for a DHR bimodule $X$.
\end{definition}

In the same way, we define $X_+ \in \Corr(A_+)$ for $X \in \DHR(A_\bullet)$, and define $\cC_+$ as the idempotent completion of the image of the induced tensor functor $F_+ \colon \DHR(A_\bullet) \to \Corr(A_+)$.
Under Condition~\ref{cond:B0-is-simple}, $\cC_\pm$ has a simple unit.
We tacitly assume this condition in the following.

\medskip
Let us record several auxiliary results that we will use later.

\begin{proposition}
Given $X \in \DHR(A_\bullet)$, we have a natural isomorphism of Hilbert $A$-modules
\[
X_- \boxtimes_{A_-} A \simeq X, \quad \xi \otimes a \mapsto \xi a.
\]
\end{proposition}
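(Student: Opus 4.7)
The plan is to define the evaluation map on the algebraic tensor product, verify it is isometric, extend it to the Hilbert module completion, and then use the projective basis for $X$ to obtain surjectivity; naturality will follow essentially for free.

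First I would define $\phi(\xi \otimes a) = \xi a$ on the algebraic tensor product $X_- \otimes A$. Since the right $A$-action on $X$ extends the right $A_-$-action on $X_-$, this map is $A_-$-balanced, so it descends to a right $A$-linear map from the algebraic $A_-$-balanced tensor product into $X$. The $A_-$-valued inner product on $X_-$ is by construction the restriction of the $A$-valued inner product on $X$ (as implicitly used in the proof of Proposition~\ref{prop:X-minus-as-Hilb-bimod}). Hence a direct computation gives
\[
\braket{ \xi \otimes a | \eta \otimes b } = a^{*} \braket{\xi|\eta}_{X_-} b = a^{*} \braket{\xi|\eta}_{X} b = \braket{\xi a | \eta b}_{X},
\]
so $\phi$ is isometric and therefore extends continuously to an isometric right $A$-module map $X_- \boxtimes_{A_-} A \to X$.

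For surjectivity I would invoke the projective basis $(\xi_j)_j$ fixed in Proposition~\ref{prop:X-minus-as-Hilb-bimod}, localized in a finite subset of $(-\infty, -R-L-1]$. By construction each $\xi_j$ lies in $X_-$, and every $x \in X$ decomposes as $x = \sum_j \xi_j \braket{\xi_j | x}$ with $\braket{\xi_j | x} \in A$. Thus $x$ is the image under $\phi$ of $\sum_j \xi_j \otimes \braket{\xi_j | x}$, so the image of $\phi$ is all of $X$. An isometric surjection of Hilbert modules is automatically unitary, giving the desired isomorphism.

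Finally, naturality in $X$ should be essentially formal: any morphism $f \colon X \to Y$ in $\DHR(A_\bullet)$ is an $A$-bimodule intertwiner and so preserves the localization condition defining $X_-$, hence restricts to $F_-(f) \colon X_- \to Y_-$; the naturality square commutes on simple tensors via $f(\xi) a = f(\xi a)$ and thus on the completion by continuity. There is no substantive obstacle, and the only point requiring care is ensuring that the same projective basis $(\xi_j)$ serves simultaneously as a generating set for $X_-$ over $A_-$ and as a projective basis for $X$ over $A$, which is guaranteed by the localization constraints used in Proposition~\ref{prop:X-minus-as-Hilb-bimod}.
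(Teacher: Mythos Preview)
Your proposal is correct and follows essentially the same approach as the paper. The paper's one-line proof simply records the key observation that a projective basis $(\xi_i)_i$ of $X$ localized in a suitable negative interval is simultaneously an $A_-$-projective basis of $X_-$; your argument spells out the standard consequences of this fact (isometry of the evaluation map, surjectivity via the basis expansion), which is exactly what the paper leaves implicit.
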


\begin{proof}
This follows from the fact that, if $(\xi_i)_i$ is a basis of $X$ localized in a negative interval far enough from $-1$ as above, then it is also an $A_-$-basis of $X_-$.
\end{proof}

\begin{proposition}\label{prop:A-min-hom-locality}
Let $R$, $L$ be as in Proposition~\ref{prop:X-minus-as-Hilb-bimod}, and let $T\colon X_- \to Y_-$ be a homomorphism of Hilbert $A_-$-bimodules.
Take bases $(\xi_i)_i \subset X$ and $(\eta_j)_j \subset Y$ that are localized in the interval $[-a, -R-L-1]$.
Then we have $\braket{ \eta_j | T(\xi_i) } \in A_{[-(a + R), -1]}$.
\end{proposition}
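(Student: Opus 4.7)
The plan is to show that $\braket{\eta_j | T(\xi_i)}$ commutes with $A_{I^c}$ for $I = [-a, -R-1]$, so that weak algebraic Haag duality (Condition~\ref{cond:Haag}) yields the desired inclusion $\braket{\eta_j | T(\xi_i)} \in A_{I^{+R}} = A_{[-(a+R), -1]}$.

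The first easy observation is that $\braket{\eta_j | T(\xi_i)} \in A_-$: this follows from Proposition~\ref{prop:X-minus-as-Hilb-bimod} applied to $Y$, since both $\eta_j$ and $T(\xi_i)$ lie in $Y_-$. In particular, the element automatically commutes with $A_+ = A_{[0, \infty)}$.

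The core of the argument is the following matrix-element calculation. For any $c$ lying in either $A_{(-\infty, -a-1]}$ or $A_{[-R-L, -1]}$, the element $c$ belongs to $A_-$ and commutes with both $\xi_i$ and $\eta_j$, because these are localized in $[-a, -R-L-1]$ and both $(-\infty, -a-1]$ and $[-R-L, -1]$ lie in the complement of this interval. Combining $A_-$-bilinearity of $T$ with the sesquilinearity of the inner product yields
\begin{align*}
c \braket{\eta_j | T(\xi_i)} &= \braket{\eta_j c^* | T(\xi_i)} = \braket{c^* \eta_j | T(\xi_i)} = \braket{\eta_j | c T(\xi_i)} \\
&= \braket{\eta_j | T(c \xi_i)} = \braket{\eta_j | T(\xi_i c)} = \braket{\eta_j | T(\xi_i)} c,
\end{align*}
so the matrix entry commutes with both $A_{(-\infty, -a-1]}$ and $A_{[-R-L, -1]}$ inside $A$.

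To finish, I would combine these three commutativities and invoke the covering property (Condition~\ref{cond:generation}) to argue that $A_{(-\infty, -a-1]}$, $A_{[-R-L, -1]}$, and $A_+$ together C$^*$-generate $A_{I^c}$; weak algebraic Haag duality then delivers the desired inclusion. The main obstacle is precisely this last generation step: $I^c$ contains finite subintervals of $[-R, \infty)$ that straddle the $-1/0$ boundary, and expressing the corresponding local algebras in terms of the three pieces we control requires careful bookkeeping with the covering property, exploiting that the overlap $[-L, -1]$ of length $L$ between $[-R-L, -1]$ and the straddling intervals is what is needed to apply Condition~\ref{cond:generation}.
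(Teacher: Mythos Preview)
Your matrix-element calculation is correct, and so is the identification $\braket{\eta_j|T(\xi_i)}\in A_-$. The difficulty you flag at the end, however, is a genuine gap that the covering property cannot close. To apply Haag duality with $I=[-a,-R-1]$ you would need commutation with $A_{[-R,\infty)}$, and in particular with the local algebras $A_{[c,d]}$ for intervals straddling the point~$0$. The covering property only lets you split $A_{I\cup J}$ when $I$ and $J$ \emph{overlap} in an interval of length~$L$; but $[-R-L,-1]$ and $[0,\infty)$ are disjoint, and any decomposition of a straddling interval via covering will necessarily produce a piece (such as $A_{[-L,d]}$ with $d\ge 0$, or $A_{[-R-L,L-1]}$) that itself crosses~$0$. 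For such a piece you have no commutation argument available, because $T$ is only $A_-$-bilinear and the piece is not contained in $A_-$.

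The paper sidesteps this by choosing the larger interval $I=[-a,-1]$ instead of $[-a,-R-1]$. Its complement $(-\infty,-a-1]\cup[0,\infty)$ is a disjoint union of two half-lines, so every finite interval contained in $I^c$ lies wholly in one of them, and $A_{I^c}=A_{(-\infty,-a-1]}\vee A_{[0,\infty)}$ with no appeal to covering needed. Your first observation already gives commutation with $A_{[0,\infty)}$; commutation with $A_{(-\infty,-a-1]}$ follows because $\xi_i$ is central for this algebra and $T$ is a left $A_-$-module map, so $T(\xi_i)$ inherits the same centrality, as does $\braket{\eta_j|T(\xi_i)}$. Weak Haag duality then places the element in $A_{[-(a+R),R-1]}$, and combining this with membership in $A_-$ gives the claim. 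Your additional commutation with $A_{[-R-L,-1]}$ is valid but unnecessary for this route.
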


\begin{proof}
By assumption $\xi_i$ is central for $A_{(-\infty, -(a+1)]} \otimes A_{[0, \infty)}$, hence $T(\xi_i)$ has the same property, and the same holds for $\braket{ \eta_j | T(\xi_i) }$.
We also know that $\braket{ \eta_j | T(\xi_i) }$ belongs to $A_-$.
We then have
\[
\braket{ \eta_j | T(\xi_i) } \in A_{[-(a + R), R-1]} \cap A_-,
\]
hence the claim.
\end{proof}

\begin{remark}
If $(\xi_i)_i$ is localized in $[-a, -b]$, then the vector $\xi'_i = T(\xi_i)$ is central for the algebra $A_{(-\infty, -(a+1)]} \otimes A_{[-(b-1), -1]}$ since $T$ is an $A_-$-bimodule homomorphism.
Moreover, $\xi'_i$ is also central for $A_{[0, \infty)}$, since the target of $T$ is $Y_-$.
But we still don't have the commutation with $A_{[-c, d]}$ for $c, d > 0$, which prevents us from concluding $\braket{ \eta_j | T(\xi_i) } \in A_{[-(a+K), -(b-K)]}$.
(Of course, this would be true for $A$-bimodule homomorphisms.)
\end{remark}

\subsection{Subalgebra from change transporters}

Now, given the definition of charge transporters, we will consider subalgebras defined by them, and introduce a condition on our abstract spin chain which guarantees the charge transporters locally generate the quasi-local algebra.

\begin{definition}
We define $C_0 \subset A$ to be C$^*$-subalgebra generated by $B_0$ and the elements $\braket{ \xi | \eta }$ for $\xi \in X_-$ and $\eta \in X_+$, for all DHR bimodules $X$.
\end{definition}

\begin{proposition}\label{prop:reduced-algebra-generation}
The algebra $C_0$ is linearly spanned by $\braket{ X_- | X_+ }$ for $X\in \DHR(A_{\bullet})$.
\end{proposition}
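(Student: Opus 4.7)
The plan is to show that the linear span $V := \Span\{\braket{\xi|\eta} : \xi \in X_-,\ \eta \in X_+,\ X \in \DHR(A_\bullet)\}$ is a $*$-subalgebra of $A$ containing $B_0$; since $V \subseteq C_0$ trivially and $C_0$ is generated (as a C$^*$-algebra) by $B_0$ together with these inner products, this gives $\overline{V} = C_0$. That $V \supseteq B_0$ follows by taking the trivial DHR bimodule $X = A$: here $X_\pm = A_\pm$ and $\braket{a|b} = a^* b$, whose span equals $A_- A_+ = B_0$ by commutativity.

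Next, I would verify that $V$ is a right $B_0$-module and is closed under involution. For $c \in A_-$, using that $\eta \in X_+$ commutes with $A_-$ and the standard adjoint identity $\braket{\xi|c\eta} = \braket{c^*\xi|\eta}$ for the left action, one has $\braket{\xi|\eta} c = \braket{\xi|\eta c} = \braket{\xi|c\eta} = \braket{c^*\xi|\eta} \in V$, since $c^*\xi \in X_-$ by the left $A_-$-module structure of $X_-$ implicit in Proposition~\ref{prop:X-minus-as-Hilb-bimod}; for $d \in A_+$, we have $\braket{\xi|\eta} d = \braket{\xi|\eta d}$ with $\eta d \in X_+$. For the involution, the conjugate correspondence $\bar X$ of a DHR bimodule is again a DHR bimodule and conjugation preserves localizations, so $\bar\xi \in (\bar X)_-$, $\bar\eta \in (\bar X)_+$, and $\braket{\bar\xi|\bar\eta}_{\bar X} = \braket{\eta|\xi}_X = \braket{\xi|\eta}_X^*$.

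The heart of the argument is closure under multiplication. Given $v_1 = \braket{\xi_1|\eta_1}$ and $v_2 = \braket{\xi_2|\eta_2}$, I would pick projective bases $(\mu_k)_k, (\nu_l)_l \subset X_2$ localized in far-left and far-right intervals respectively, placed so that each charge transporter $\braket{\mu_k|\nu_l}$---which lies in $A_{(F_{\mu_k}\cup F_{\nu_l})^{+R}}$ by weak algebraic Haag duality---commutes with $\eta_1$. Expanding $\xi_2 = \sum_k \mu_k c_k$ and $\eta_2 = \sum_l \nu_l d_l$ with $c_k \in A_-$, $d_l \in A_+$, and applying the right-module manipulation from the previous paragraph, we obtain
\[
v_1 v_2 = \sum_{k,l} \braket{\xi_1|\eta_1}\, c_k^*\, \braket{\mu_k|\nu_l}\, d_l = \sum_{k,l} \braket{c_k \xi_1|\eta_1}\, \braket{\mu_k|\nu_l}\, d_l.
\]
The commutation of $\braket{\mu_k|\nu_l}$ with $\eta_1$ together with right $A$-linearity then identifies $\braket{c_k\xi_1|\eta_1}\braket{\mu_k|\nu_l}$ with $\braket{\mu_k \boxtimes c_k\xi_1 | \nu_l \boxtimes \eta_1}_{X_2 \boxtimes_A X_1}$, an inner product in the DHR bimodule $X_2 \boxtimes X_1$ of a vector in $(X_2 \boxtimes X_1)_-$ with a vector in $(X_2 \boxtimes X_1)_+$ (via monoidality of $F_\pm$), hence in $V$; multiplying by $d_l \in A_+$ and summing preserves membership in $V$. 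The main obstacle is this final orchestration of localizations: arranging the needed commutations relies crucially on both the DHR freedom to relocalize bases and the bounded-support control on charge transporters supplied by weak algebraic Haag duality.
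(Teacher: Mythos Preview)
Your overall strategy --- show that the span $V$ contains $B_0$, is a $B_0$-bimodule, is $*$-closed, and is closed under multiplication --- is exactly the paper's, and your treatment of the first three points is correct. The gap is in the multiplication step, specifically in the commutation you invoke between $\braket{\mu_k|\nu_l}$ and $\eta_1$.

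Two things go wrong. First, weak algebraic Haag duality (Condition~\ref{cond:Haag}) constrains $A_{I^c}' \cap A$ only for \emph{intervals} $I$; since $F_{\mu} \cup F_{\nu}$ is disconnected, you cannot conclude $\braket{\mu_k|\nu_l} \in A_{(F_\mu \cup F_\nu)^{+R}}$. Second, and more seriously, a general $\eta_1 \in (X_1)_+$ is only known to be $A_-$-central --- it is a combination $\sum_j \eta'_j a_j$ with $a_j \in A_+$ completely unconstrained in support. With $\nu_l$ placed in $[0,\infty)$ the transporter $\braket{\mu_k|\nu_l}$ will never lie in $A_-$, so no placement of $F_\mu, F_\nu$ can force $\braket{\mu_k|\nu_l}\eta_1 = \eta_1\braket{\mu_k|\nu_l}$. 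Even after reducing to a localized basis vector $\eta_1$ supported in some $G \subset [0,\infty)$, sandwiching $G$ between $F_\mu$ and $F_\nu$ only gives that $\braket{\mu_k|\nu_l}$ commutes with $A_G$, not that it lies in $A_{G^c}$ --- and the latter is what centrality of $\eta_1$ requires.

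The paper repairs this by a different placement. After reducing via the $B_0$-bimodule property to products $\braket{\eta^1_k|\eta^2_l}\braket{\xi^1_i|\xi^2_j}$ of basis inner products, it nests the $X$-supports \emph{inside} the $Y$-supports: $F^Y_1 < F^X_1 \subset (-\infty,-1]$ and $F^X_2 < F^Y_2 \subset [0,\infty)$. Then $\braket{\xi^1_i|\xi^2_j}$ commutes with $A_{[\max F^X_2 + 1,\infty)}$, the complement of a genuine half-line, so Haag duality puts it in $A_{(-\infty,\max F^X_2 + R]}$; with $F^Y_2$ placed to the right of that, the localized vector $\eta^2_l$ commutes with it. One then reads off
\[
\braket{\xi^1_i \boxtimes \eta^1_k \mid \xi^2_j \boxtimes \eta^2_l} = \braket{\eta^1_k \mid \braket{\xi^1_i|\xi^2_j}\,\eta^2_l} = \braket{\eta^1_k|\eta^2_l}\,\braket{\xi^1_i|\xi^2_j},
\]
which exhibits the product as an element of $\braket{(X\boxtimes_A Y)_- \mid (X\boxtimes_A Y)_+} \subset V$. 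The essential geometric point you were missing is that the transporter appearing in the middle must be localizable in a \emph{half-line} disjoint from the support of the outer basis vector, which forces the nested (not surrounding) arrangement.
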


\begin{proof}
We have $B_0 \braket{ X_- | X_+ } \subset \braket{ X_- | X_+ }$ by the $A_\pm$-bimodule property of $X_\pm$ and the consistency of inner product.
The self-adjointness of the linear span of $\braket{ X_- | X_+ }$ comes from the fact that $\DHR(A_\bullet)$ is closed under duality, so that we can write
\[
\braket{ \xi | \eta }^* = \braket{ \eta | \xi } = \braket{ \bar{\xi} | \bar{\eta} }
\]
using the vectors $\bar{\xi}, \bar{\eta} \in X^*$ corresponding to $\xi$ and $\eta$.

It remains to prove that, when $X$ and $Y$ are DHR bimodules, we can find bases $(\xi^1_i)_i$, $(\xi^2_j)_j$, $(\eta^1_k)_k$, and $(\eta^2_l)_l$ localized in some intervals $F^X_1, F^Y_1 \subset (-\infty, -1]$ and $F^X_2, F^Y_2 \subset [0, \infty)$ such that the charge transporters for some DHR bimodule $Z$ contains the products $\braket{ \eta^1_k | \eta^2_l } \braket{ \xi^1_i | \xi^2_j }$.

We claim that $Z = X \boxtimes_A Y$ will do.
Take $F^Y_1$ to the left of $F^X_1$, and $F^Y_2$ to the right of $F^X_2$.
Then $\braket{ \xi^1_i | \xi^2_j }$ will commute with both $\eta^1_k$ and $\eta^2_l$.
We thus have
\[
\braket{ \xi^1_i \boxtimes \eta^1_k | \xi^2_j \boxtimes \eta^2_l } = \braket{ \eta^1_k | \braket{\xi^1_i | \xi^2_j} \eta^2_l} = \braket{ \eta^1_k | \eta^2_l } \braket{ \xi^1_i | \xi^2_j },
\]
establishing the claim.
\end{proof}

Now, let us consider the following conditions.

\begin{condition}[Charge-transporter generation]\label{cond:reduced-net}
The algebra $C_0$ agrees with $A$.
\end{condition}

Instead of cutting at $0$, we can consider the cut at a different integer $x \in \bZ$.

\begin{condition}\label{cond:reduced-net-arbitrary-cut-point}
Let $C_x \subset A$ be the subalgebra linearly spanned by $\braket{ X_{(-\infty, x-1]} | X_{[x, \infty)} }$ for the DHR bimodules $X$.
Then $C_x$ agrees with $A$.
\end{condition}

Another variation, which we do not use, would be the following.

\begin{condition}
Let $I = [c, d] \subset \bZ$ be an interval.
Consider the subalgebra $C_{I, k}$ of $A_{[-k, k]}$ generated by $A_I$, $A_{[-k, -c-1]}$, $A_{[d+1, k]}$, the charge transporters of DHR bimodules $X$ with respect to bases $(\xi_i)_i$ and $(\xi'_j)_j$, localized in $I$ and $[d+1, k]$, and the charge transporters of $X$ with respect to bases $(\eta_i)_i$ and $(\eta'_j)_j$, localized in $[-k, -c-1]$ and $I$.
For long enough $I$ (we will take $\absv{I} > 3 L$) and big enough $k$ (we will take $k > \absv{I} + L$), the algebra $C_{I, k}$ agrees with $A_{[-k, k]}$.
\end{condition}

This corresponds to the assumption in~\cite{MR1721563}*{Corollary 4.3}.
While it seems a natural requirement, there might be some counterexample of abstract spin chains with this property in the line of Remark 3 following~\cite{MR1721563}*{Corollary 4.3}.
In particular, can we produce a nontrivial decomposition of the form~\eqref{eq:decomp-of-center-by-dhr} from this observation?

\begin{proposition}\label{prop:decomp-A-as-B0-bimod}
Suppose that $A_\bullet$ satisfies charge-transporter generation, Condition~\ref{cond:reduced-net}.
Then there is a surjective $B_0$-bimodule homomorphism of the form
\[
\bigoplus_i (X_i)_- \otimes (Y_i)_+ \to A
\]
for some DHR bimodules $(X_i)_i$ and $(Y_i)_i$.
\end{proposition}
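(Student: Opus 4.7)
The plan is to realize $A$ as the image of a direct sum of tensor products through the $A$-valued pairings $\braket{\xi | \eta}$. By Proposition~\ref{prop:reduced-algebra-generation} combined with Condition~\ref{cond:reduced-net}, the quasi-local algebra $A=C_0$ is linearly spanned by elements $\braket{\xi | \eta}$ with $\xi\in Z_-$, $\eta\in Z_+$, as $Z$ runs over a skeleton of $\DHR(A_\bullet)$. Surjectivity of any candidate map hitting these elements is therefore automatic; the remaining content is to produce a source whose natural $B_0$-bimodule structure intertwines with left--right multiplication by $B_0\subset A$ on $A$.

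For each DHR bimodule $Z$ in the chosen skeleton, I would take the summand $(Z^*)_-\otimes Z_+$, where $Z^*$ is the dual DHR bimodule (again DHR by Appendix~\ref{sec:duality}), and define
\[
\mu_Z \colon (Z^*)_-\otimes Z_+ \to A, \qquad \bar\xi\otimes \eta \mapsto \braket{\xi | \eta},
\]
with $\bar\xi\in (Z^*)_-$ corresponding to $\xi\in Z_-$ under conjugation. The source carries its natural $B_0=A_-\otimes A_+$ bimodule structure, built from the $A_-$-bimodule structure on $(Z^*)_-$ and the $A_+$-bimodule structure on $Z_+$. Setting $X_i=Z_i^*$, $Y_i=Z_i$, and summing over the skeleton gives a map of the required form.

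The only verification is $B_0$-bilinearity, a direct calculation. For the left action of $a_-\otimes a_+$, using $a_-\cdot\bar\xi=\overline{\xi a_-^*}$ together with the Hilbert module identity $\braket{\xi a\,|\,\eta}=a^*\braket{\xi\,|\,\eta}$ gives $\braket{\xi a_-^*\,|\,a_+\eta}=a_-\braket{\xi\,|\,a_+\eta}$. Next $\braket{\xi\,|\,a_+\eta}=\braket{a_+^*\xi\,|\,\eta}=\braket{\xi a_+^*\,|\,\eta}=a_+\braket{\xi\,|\,\eta}$, where the middle step uses the centralization of $Z_-$ by $A_+$ (every vector in $Z_-$ is an $A_-$-combination of vectors localized in the negative half-line, hence commutes with all of $A_+$). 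Combining, the image of the translated source is $a_-a_+\braket{\xi|\eta}=(a_-\otimes a_+)\cdot\mu_Z(\bar\xi\otimes\eta)$. The right action is symmetric: using $\bar\xi\cdot a_-=\overline{a_-^*\xi}$ and the centralization of $Z_+$ by $A_-$, one computes $\braket{a_-^*\xi\,|\,\eta a_+}=\braket{\xi\,|\,\eta a_-a_+}=\braket{\xi\,|\,\eta}a_-a_+$.

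The main conceptual obstacle is subtle: the apparently simpler map $Z_-\otimes Z_+\to A$ sending $\xi\otimes\eta\mapsto\braket{\xi | \eta}$ is \emph{not} $B_0$-bilinear under the standard bimodule structure, because the Hilbert inner product is conjugate linear in its first argument and would drop an unwanted $a_-^*$ on the wrong side. Passing to the dual $(Z^*)_-$ on the left factor absorbs exactly this conjugation; thereafter the verification reduces to bookkeeping with the Hilbert module axioms and the locality/centralization properties characterizing DHR vectors.
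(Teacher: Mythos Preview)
Your proof is correct and follows the same approach as the paper: both use the pairing $(Z^*)_- \otimes Z_+ \to A$, $\bar\xi \otimes \eta \mapsto \braket{\xi | \eta}$, and then invoke Proposition~\ref{prop:reduced-algebra-generation} together with Condition~\ref{cond:reduced-net} for surjectivity. Your write-up is more detailed than the paper's, explicitly verifying $B_0$-bilinearity and explaining why passing to the dual on the left factor is needed to absorb the conjugate-linearity of the inner product; the paper simply asserts the map is a $B_0$-bimodule homomorphism without further comment.
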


\begin{proof}
Given $Y$, we have a $B_0$-bimodule map $(Y^*_-) \otimes Y_+ \to A$ given by the natural pairing
\[
 \eta_1^* \otimes \eta_2 \mapsto \braket{ \eta_1 | \eta_2 }.
\]
By assumption and Proposition~\ref{prop:reduced-algebra-generation}, there are DHR bimodules $(Y_i)_i$ such that the subspaces $\braket{ (Y_i)_- | (Y_i)_+ } \subset A$ linearly span $A$.
\end{proof}

\begin{corollary}\label{cor:decomp-A-as-B0-bimod}
Let $X$ be a DHR bimodule.
As a $B_0$-bimodule, $X$ decomposes as $\bigoplus_i X^-_i \otimes X^+_i$, where $X^{\pm}_i$ are irreducible $A_{\pm}$-bimodules that appear as submodules of $Y_\pm$ for some DHR bimodule $Y$.
\end{corollary}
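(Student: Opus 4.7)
My plan is to deduce the decomposition of $X$ from the $B_0$-bimodule decomposition of $A$ furnished by Proposition~\ref{prop:decomp-A-as-B0-bimod}, propagated through the identification $X \simeq X_- \boxtimes_{A_-} A$.

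First I would promote the natural $A_-$-$A$-bimodule isomorphism $X \simeq X_- \boxtimes_{A_-} A$ to a $B_0$-bimodule isomorphism: vectors spanning $X_-$ are localized far to the left and hence commute with $A_+$, so the left $A_+$-action on $X$ can be absorbed into the right tensor factor. Tensoring the surjection $\bigoplus_i (X_i)_- \otimes (Y_i)_+ \twoheadrightarrow A$ of Proposition~\ref{prop:decomp-A-as-B0-bimod} with $X_-$ over $A_-$, and using that the left $A_-$-action on $(Y_i)_+$ is trivial (since $A_-$ and $A_+$ commute), yields a $B_0$-bimodule surjection
\[
\bigoplus_i \bigl(X_- \boxtimes_{A_-} (X_i)_-\bigr) \otimes (Y_i)_+ \twoheadrightarrow X.
\]
By the monoidality of $F_-$, the first tensor factor rewrites as $(X \boxtimes_A X_i)_- \in \cC_-$.

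Since the surjection splits in the C$^*$ category of Hilbert $B_0$-bimodules (using the orthogonal complement of its kernel), $X$ is a direct summand of a direct sum of external tensor products $M \otimes N$ with $M \in \cC_-$ and $N \in \cC_+$. Decomposing each such $M$ and $N$ into simples in $\cC_\pm$ --- which, by the very definition of $\cC_\pm$, are submodules of $Y_\pm$ for some DHR bimodule $Y$ --- realizes $X$ as a direct summand of a direct sum $\bigoplus_k M_k^- \otimes M_k^+$ of external tensor products of simples. Provided each such $M_k^- \otimes M_k^+$ is itself irreducible as a $B_0$-bimodule, $X$ must then be a direct sum of pieces of this form, giving the claimed decomposition.

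The main obstacle is precisely this last irreducibility statement: a priori, an external tensor product of irreducible $A_\pm$-bimodules could split further as a $B_0$-bimodule. This is where Condition~\ref{cond:B0-is-simple} enters --- the triviality of the centers of the half-line algebras provides the factoriality needed to reduce the bimodule theory of $B_0 = A_- \otimes A_+$ to the product of the bimodule theories of $A_-$ and $A_+$, ruling out non-factorized simple subbimodules of $M^- \otimes M^+$ and ensuring that the uniqueness of the factorized decomposition applies to $X$.
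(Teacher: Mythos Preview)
Your proposal is correct and follows essentially the same route as the paper: the paper also tensors the surjection of Proposition~\ref{prop:decomp-A-as-B0-bimod} by $X_-$ over $A_-$, identifies $X_- \boxtimes_{A_-} Z^-_i$ with a submodule of $(X \boxtimes_A Y)_-$, and invokes semisimplicity of $\cD = \cC_- \boxtimes \cC_+$ (which packages your irreducibility discussion). The only cosmetic difference is that the paper works directly with the surjective product map $X_- \boxtimes_{A_-} A \to X$ rather than first promoting it to a $B_0$-bimodule isomorphism, but this does not affect the argument.
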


\begin{proof}
Let $\cD = \cC_- \boxtimes \cC_+$. The claim for $X = A$ follows from the above proposition by the semisimplicity of $\cD$.
Suppose that $A \simeq \bigoplus_i Z^-_i \otimes Z^+_i$ is such a decomposition.
Given a general DHR bimodule $X$, we have a surjective $B_0$-homomorphism $X_- \boxtimes_{A_-} A \to X$ given by the product map $\xi \otimes a \mapsto \xi a$.
This gives a surjective map $\bigoplus_i (X_- \boxtimes_{A_-} Z^-_i) \otimes Z^+_i \to X$.
Let us fix $i$.
If $Z^-_i$ appears as an $A_-$-submodule of $Y_-$ for some DHR bimodule $Y$, then $X_- \boxtimes_{A_-} Z^-_i$ appears as a submodule of $(X \boxtimes_A Y)_-$.
This shows the claimed decomposition of $X$.
\end{proof}

\subsection{Finiteness of the DHR category}

Let us put $\cD = \cC_- \boxtimes \cC_+$.
We assume that Condition~\ref{cond:B0-is-simple} holds, so that $\cD$ has a simple unit.

Let us denote by $\bA$ the C$^*$-algebra object in $\cD$ associated to the inclusion $B_0 \subset A$ (see~\cites{MR3948170,palomares2023discrete}).
The finite index condition on $E$ allows us to write
\begin{equation*}
\bA(X) = \Hom_{B_0 \mhyph B_0}( X, A) \simeq \Hom_{B_0 \mhyph A}(X \underset{B_0}{\otimes} A,A).
\end{equation*}
Using the rightmost model for $\bA$, the algebra structure can be defined as
\begin{equation*}
\bA(X) \otimes \bA(Y) \ni f \odot g \mapsto f \circ(\id_X \otimes g) \in \bA(X \underset{B_0}{\otimes}Y).
\end{equation*}
The involution $j^{\bA}_X\colon \bA(X) \to \bA(\overline{X})$ is given as the composite
\[
\Hom_{B_0\mhyph A}(X \underset{B_0}{\otimes}A ,A) \to \Hom_{B_0\mhyph A}(A, \overline{X} \underset{B_0}{\otimes} A) \overset{*}{\to} \Hom_{B_0\mhyph A}(\overline{X} \underset{B_0}{\otimes}A,A),
\]
where the first map is the Frobenius reciprocity map $f \mapsto (\id_{\overline{X}} \otimes f) (R_X \otimes \id_A)$.

\begin{remark}
Without the finite index condition, $\Hom_{B_0 \mhyph B_0}(X,A)$ would not give the right object.
\end{remark}

Corollary~\ref{cor:decomp-A-as-B0-bimod} implies that, under Condition~\ref{cond:reduced-net}, the map 
\[
|\bA|_{alg} = \bigoplus_{X \in \Irr(\cD)} X \odot \bA(X) \to A
\]
given by evaluation is an injective homomorphism with dense image.
The summand corresponding to the monoidal unit $1_\cD \in \Irr(\cD)$ is $B_0 \otimes (B_0' \cap A)$, since $\bA(B_0)$ is the space of $B_0$-central elements in $A$.
Thus, the orthogonal projection onto this summand defines a faithful conditional expectation
\[
\bE\colon A \to B_0 \otimes (B_0' \cap A).
\]

To estimate the ``index'' of $\bE$, we freely use notation from~\cite{MR2085108}.
In particular, given C$^*$-algebras $C$, $D$, and a bi-Hilbertian C$^*$-$C$-$D$-module ${}_C X_D$, we consider its \emph{right index element} $\rInd(X) \in C''$ and \emph{right numerical index} $\rI(X) \in [0, \infty]$.
The element $\rInd(X)$ can be formally considered as $F(\id_X)$ for the completely positive map $F$ extending $\cK(X_D) \to C$ given by $\xi \eta^* \mapsto {}_C(\xi, \eta)$, hence we have the additivity $\rInd(X \oplus X') = \rInd(X) + \rInd(X')$.
The number $\rI(X)$ can be defined as $\norm{\rInd(X)}$~\cite{MR2085108}*{discussion after Definition 2.17}.

\begin{lemma}\label{lem:r-ind-X-d-X}
For $X \in \cD$, we have $\rInd(X) = d(X)$.
\end{lemma}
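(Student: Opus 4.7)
The plan is to reduce to the simple case by additivity, and then to identify both $\rInd(X)$ and $d(X)$ with a common expression computed from a right $B_0$-basis. Both quantities are additive under direct sums: for $\rInd$, this is built into the definition of the extension $F$ of $\xi\eta^* \mapsto {}_{B_0}(\xi, \eta)$ on compacts; for $d$, it is a standard property of the categorical dimension in any semisimple rigid C$^*$-tensor category. Since $\cD$ is semisimple, it therefore suffices to establish $\rInd(X) = d(X)$ when $X$ is simple.

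Fix a simple object $X \in \cD$. Being dualizable in $\cD$, $X$ is a bi-Hilbertian $B_0$-$B_0$-correspondence, so admits a right $B_0$-basis $(\xi_i)_i$ with $\id_X = \sum_i \xi_i \braket{\xi_i | \cdot}$ in $\cK(X_{B_0})$. Evaluating the cp map $F$ of~\cite{MR2085108} on this expression yields
\[
\rInd(X) = \sum_i {}_{B_0}(\xi_i, \xi_i),
\]
where ${}_{B_0}(\cdot, \cdot)$ is the left $B_0$-valued inner product provided by the bi-Hilbertian structure.

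To match this with the categorical dimension, I would use the standard solution $(R_X, \bar R_X)$ of the conjugate equations for $X$ in $\cD$. From the same basis, $\bar R_X$ is given by $1_{B_0} \mapsto \sum_i \xi_i \boxtimes_{B_0} \bar\xi_i$, where $\bar\xi_i \in X^*$ is the dual basis element; a direct computation then gives $\bar R_X^* \bar R_X = \sum_i {}_{B_0}(\xi_i, \xi_i)$. Since $\End_\cD(X) = \bC$ in the simple case, standardness is automatic after balancing $R_X^* R_X$ with $\bar R_X^* \bar R_X$, and the resulting scalar is by definition $d(X)$. Combined with the previous display, this yields $\rInd(X) = d(X)$.

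The only delicate point is matching the normalization conventions of~\cite{MR2085108} for $\rInd$ with those used to define categorical dimensions via standard solutions of conjugate equations. Once the conventions are aligned, the argument reduces essentially to a one-line computation in a single basis, and the particular presentation of $\cD$ as $\cC_- \boxtimes \cC_+$ plays no role beyond ensuring dualizability and semisimplicity.
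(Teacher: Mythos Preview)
Your reduction to simple $X$ and the computation $\rInd(X)=\sum_i {}_{B_0}(\xi_i,\xi_i)=\bar R_X^*\bar R_X$ for the solution built from a right basis are both fine, and this is essentially the same mechanism the paper uses. The gap is in the last step. You write that ``standardness is automatic after balancing $R_X^*R_X$ with $\bar R_X^*\bar R_X$'', but balancing is a rescaling $R\mapsto \lambda R$, $\bar R\mapsto \lambda^{-1}\bar R$, which \emph{changes} $\bar R^*\bar R$. After you balance, the common value is $\sqrt{(R^*R)(\bar R^*\bar R)}=\sqrt{\mathrm{l}\text{-}\mathrm{Ind}(X)\cdot\rInd(X)}$, and this is what equals $d(X)$; you no longer have $\bar R^*\bar R=\rInd(X)$. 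So as written your argument proves $d(X)^2=\rInd(X)\cdot\mathrm{l}\text{-}\mathrm{Ind}(X)$, not $d(X)=\rInd(X)$. You correctly flag this as ``the only delicate point'', but the resolution you propose does not close it.

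What is actually needed is that the solution $(R_X,\bar R_X)$ coming from the bi-Hilbertian structure is already standard, so that no rescaling is required and $\bar R_X^*\bar R_X=\rInd(X)$ equals $d(X)$ on the nose. This is precisely the content of \cite{MR2085108}*{Theorem~4.4}, which the paper invokes directly. The paper's proof also avoids the reduction to simple $X$: it uses that $B_0$ is unital and simple (Conditions assumed throughout the section) together with \cite{MR2085108}*{Corollaries~2.25 and~2.26} to conclude that $\rInd(X)$ is a scalar for \emph{any} $X\in\cD$, and then applies Theorem~4.4. Your additivity detour is harmless but unnecessary once you have that input.
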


\begin{proof}
The existence of finite right basis for $X$, combined with $B_0$ being unital, implies that $\rInd(X)$ is an element of $B_0$ by~\cite{MR2085108}*{Corollary 2.25}.
The bimodule $X$ is of finite right numerical index, that is, $\rI(X) < \infty$ holds.
This, combined with the simplicity of $B_0$, implies that $\rInd(X)$ is a scalar by~\cite{MR2085108}*{Corollary 2.26}.
Moreover, solutions to the conjugate equations for $X$ are related to $\rInd(X)$ by $\overline{R}^* \overline{R} = \rInd(X)$, see~\cite{MR2085108}*{Theorem 4.4}.
(We also have $R^* R = \mathrm{l}\mhyph\mathrm{Ind}(X)$.)
We then get the claim by normalization condition $d(X) = \overline{R}^* \overline{R}$.
\end{proof}

Recall that $\bA(X)$ is a bimodule over $\bA(1_\cD)$.
We get a left $\bA(1_\cD)$-valued inner product on $\bA(X)$ by
\[
{}_{\bA(1_\cD)} (f_1, f_2) = f_1 f_2^*,
\]
and the right $\bA(1_\cD)$-valued inner product by
\[
\braket{ f_1 | f_2 }_{\bA(1_\cD)} = (R^* \otimes \id_A) (\id_{\overline{X}} \otimes f_1^* f_2) (R \otimes \id_A).
\]
In general, these inner products are not full, and their ranges are ideals of $\bA(1_\cD)$.

Let $p_X$ be the central projection of $\bA(1_\cD)$ such that $p \bA(1_\cD)$ agrees with the range of left $\bA(1_\cD)$-valued inner product on $\bA(X)$.

\begin{proposition}\label{prop:r-ind-A-X-ge-d-X-inv-p-X}
We have $\rInd(\bA(X)) \ge d(X)^{-1} p_X$ for $X \in \cD$.
\end{proposition}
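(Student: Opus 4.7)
The plan is to estimate $\rInd(\bA(X))$ from below by exhibiting, for each projection $q \le p_X$ in the range of the left inner product, an element of $\bA(X)$ whose left $\bA(1_\cD)$-valued inner product dominates $d(X)^{-1} q$. The starting point is that $\rInd(\bA(X)) = F(\id_{\bA(X)}) \in \bA(1_\cD)''$, where $F$ is the (strict extension of the) completely positive map on $\cK(\bA(X)_{\bA(1_\cD)})$ determined by $\theta_{\xi,\eta} \mapsto {}_{\bA(1_\cD)}(\xi,\eta) = \xi\eta^*$. In particular $F$ is order preserving. Now for any $f \in \bA(X)$ such that $q := \braket{f | f}_{\bA(1_\cD)}$ is a projection with $f q = f$, the rank-one operator $\theta_{f,f} \in \cK(\bA(X))$ is itself a projection, and positivity of $F$ gives $f f^* = F(\theta_{f,f}) \le F(\id_{\bA(X)}) = \rInd(\bA(X))$.

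To produce the factor $d(X)^{-1}$ and to exhaust $p_X$, I would show that every such $f$ satisfies $f f^* \ge d(X)^{-1} q$. This comparison between the left and right $\bA(1_\cD)$-valued inner products is where the categorical dimension enters through the normalization $R^* R = \overline R^* \overline R = d(X)$ from Lemma~\ref{lem:r-ind-X-d-X}. Concretely, I would unwind the defining formula $\braket{f|f}_{\bA(1_\cD)} = (R^* \otimes \id_A)(\id_{\overline X} \otimes f^* f)(R \otimes \id_A)$, interpret $f$ via the Frobenius reciprocity isomorphism $\bA(X) \simeq \Hom_{B_0\mhyph A}(X \boxtimes_{B_0} A, A)$, and compare the resulting expression with $f f^*$ using the algebra multiplication $\bA(X) \otimes \bA(\overline X) \to \bA(X \otimes \overline X)$ post-composed with the pullback along $\overline R^*$. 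Since, by the defining property of $p_X$, the central supports of the projections $q = \braket{f|f}_{\bA(1_\cD)}$ exhaust $p_X$, taking the supremum then yields $\rInd(\bA(X)) \ge d(X)^{-1} p_X$.

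The main obstacle is the explicit comparison in the previous paragraph: a naive bound from the formula for the right inner product produces a factor $\norm{f}^2$ rather than the intrinsic $d(X)$, so one cannot avoid a genuinely categorical argument exploiting the Frobenius algebra structure of $\bA$ inside $\cZ(\cD)$. The cleanest route is to express both $f f^*$ and $\braket{f|f}_{\bA(1_\cD)}$ as pullbacks of the product $\mu(f, j^\bA(f)) \in \bA(X \otimes \overline X)$ along morphisms in $\cD$ built from $R$ and $\overline R$ respectively, and then invoke the conjugate equation identity $(R^* \otimes \id_X)(\id_X \otimes \overline R) = \id_X$, together with the standardness of $R,\overline R$, to read off the desired operator inequality in $\bA(1_\cD)''$.
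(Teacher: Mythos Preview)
Your approach aims at an operator-level inequality $f f^* \ge d(X)^{-1}\braket{f|f}_{\bA(1_\cD)}$, which you correctly flag as ``the main obstacle'' but do not actually prove; the sketch invoking the Frobenius structure of $\bA$ in $\cZ(\cD)$ is too vague to constitute an argument, and no such structure has been established in the paper. More seriously, the final ``taking the supremum'' step is incorrect as stated: from $\rInd(\bA(X)) \ge d(X)^{-1} q$ for a family of projections $q$ one cannot in general conclude $\rInd(\bA(X)) \ge d(X)^{-1} \sup q$, since the order on positive elements does not interact well with the lattice join of projections. (A $2\times 2$ counterexample: $a = \mathrm{diag}(1, 2/5)$ dominates $\tfrac{1}{2} q_\theta$ for the rank-one projections $q_{\pm\pi/4}$, whose join is $I$, yet $a \not\ge \tfrac{1}{2} I$.) You also conflate the right inner-product projections $q = \braket{f|f}$ with the \emph{left} support $p_X$; it is not clear why their central supports should exhaust $p_X$.

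The paper sidesteps all of this by invoking \cite{MR2085108}*{Proposition 2.27}, which yields $\rInd(\bA(X)) \ge \lambda' p_X$ directly from the \emph{norm} inequality
\[
\lambda' \norm{\braket{f|f}_{\bA(1_\cD)}} \le \norm{{}_{\bA(1_\cD)}(f,f)} \qquad (f \in \bA(X)).
\]
For $\lambda' = d(X)^{-1}$ this is immediate: the defining formula for the right inner product together with $\norm{R} = d(X)^{1/2}$ gives $\norm{\braket{f|f}} \le d(X)\norm{f^* f}$, and the C$^*$-identity gives $\norm{f^* f} = \norm{f f^*}$. The missing idea in your proposal is precisely this reduction to a norm comparison via the Kajiwara--Pinzari--Watatani criterion, which makes the categorical manipulations you envisage unnecessary.
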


\begin{proof}
According to~\cite{MR2085108}*{Proposition 2.27}, we have $\rInd(\bA(X)) \ge \lambda' p_X$ for positive scalar $\lambda'$ whenever $\lambda' \norm{\braket{ f | f }_{\bA(1_\cD)}} \le \norm{ {}_{\bA(1_\cD)} (f, f)}$ holds for all $f \in \bA(X)$.
Thus, it is enough to check that $\lambda' = d(X)^{-1}$ satisfies this inequality.

By $\norm{R} = d(X)^{1/2}$, we have
\[
d(X)^{-1} \norm{\braket{ f | f }_{\bA(1_\cD)}} \le  d(X)^{-1} \norm{R}^2 \norm{f^* f} = \norm{f^* f}.
\]
By the C$^*$-identity, the elements $f f^* \in \bA(1_\cD)$ and $f^* f \in \End_{B_0 \mhyph A}(X \otimes A)$ have the same norm.
This proves the claim.
\end{proof}

Now, following~\cite{MR2085108}, we define $\rInd({}_{B_0} A_{B_0})$ to be the right index element $\rInd({}_{B_0} Y_{B_0})$, where $Y$ is the direct sum of the Hilbert $B_0 \otimes \bA(1_\cD)$-bimodules $X \otimes \bA(X)$ for $X \in \Irr(\cD)$.

\begin{theorem}
Let $\supp(A)$ denote the subset of $\Irr \cD$ formed by the irreducible objects $X$ such that $\bA(X) \neq 0$, and let $\sigma(Z(\bA(1_\cD)))$ be the set of minimal central projections of $\bA(1_\cD)$.
Then there is a partition of $\supp(A)$ into subsets $S_p$ labeled by $p \in \sigma(Z(\bA(1_\cD)))$, such that
\[
\rInd({}_{B_0} A_{B_0}) \ge \sum_{p \in \sigma(Z(\bA(1_\cD)))} \absv{S_p} p
\]
holds.
\end{theorem}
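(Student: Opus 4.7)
The plan is to use the direct sum decomposition $Y = \bigoplus_{X \in \Irr(\cD)} X \otimes \bA(X)$ to reduce the estimate on $\rInd({}_{B_0}A_{B_0}) = \rInd(Y)$ to a summand-by-summand comparison against Proposition~\ref{prop:r-ind-A-X-ge-d-X-inv-p-X}, and then convert the resulting central-projection inequality into a partition of $\supp(A)$. First, by additivity of the right index over orthogonal direct sums (noted just before Lemma~\ref{lem:r-ind-X-d-X}), I have
\[
\rInd(Y) = \sum_{X \in \Irr(\cD)} \rInd(X \otimes \bA(X)),
\]
with the terms for $X \notin \supp(A)$ vanishing because $\bA(X) = 0$.

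Next, each $X \otimes \bA(X)$ is the external tensor product of the bi-Hilbertian $B_0$-bimodule $X$ and the bi-Hilbertian $\bA(1_\cD)$-bimodule $\bA(X)$, regarded as a bi-Hilbertian $B_0 \otimes \bA(1_\cD)$-bimodule. I would verify directly, using finite right bases $(\xi_i)_i$ of $X$ and $(f_j)_j$ of $\bA(X)$, that the vectors $\xi_i \otimes f_j$ form a right basis of the external tensor product and that the left inner products factor as
\[
{}_{B_0 \otimes \bA(1_\cD)}(\xi_i \otimes f_j, \xi_i \otimes f_j) = {}_{B_0}(\xi_i, \xi_i) \otimes {}_{\bA(1_\cD)}(f_j, f_j),
\]
whence summing gives $\rInd(X \otimes \bA(X)) = \rInd(X) \otimes \rInd(\bA(X))$. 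Combining with Lemma~\ref{lem:r-ind-X-d-X} (so $\rInd(X) = d(X)$ is a scalar) and Proposition~\ref{prop:r-ind-A-X-ge-d-X-inv-p-X} (so $\rInd(\bA(X)) \ge d(X)^{-1} p_X$), the scalar $d(X)$ cancels and I obtain $\rInd(X \otimes \bA(X)) \ge p_X$ inside $B_0 \otimes \bA(1_\cD)$. Summing yields $\rInd(Y) \ge \sum_{X \in \supp(A)} p_X$.

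For each $X \in \supp(A)$ the central projection $p_X \in Z(\bA(1_\cD))$ is nonzero (since $\bA(X) \ne 0$ makes its left inner product nontrivial), so $p_X$ dominates at least one minimal central projection. I would pick any such $\pi(X) \in \sigma(Z(\bA(1_\cD)))$ with $\pi(X) \le p_X$ for each $X \in \supp(A)$, and set $S_p := \pi^{-1}(p)$. These subsets form a partition of $\supp(A)$ indexed by $\sigma(Z(\bA(1_\cD)))$, and since $\pi(X) \le p_X$, we get
\[
\sum_{X \in \supp(A)} p_X \ge \sum_{X \in \supp(A)} \pi(X) = \sum_{p \in \sigma(Z(\bA(1_\cD)))} \absv{S_p} \cdot p,
\]
yielding the claimed bound. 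The main technical obstacle is rigorously establishing the multiplicativity of $\rInd$ for external tensor products of bi-Hilbertian bimodules: one must make sense of the product as living in $(B_0 \otimes \bA(1_\cD))''$, identify $d(X) \otimes \rInd(\bA(X))$ with an element of $\bA(1_\cD)$ so that the comparison with $p_X$ is meaningful, and handle the case where $\bA(X)$ lacks a finite right $\bA(1_\cD)$-basis by passing to suprema over finite sub-bases as in~\cite{MR2085108}*{Definition 2.17}.
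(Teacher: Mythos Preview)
Your proposal is correct and follows essentially the same approach as the paper: additivity of $\rInd$ over the direct sum, the factorization $\rInd(X \otimes \bA(X)) = \rInd(X) \otimes \rInd(\bA(X))$, then Lemma~\ref{lem:r-ind-X-d-X} and Proposition~\ref{prop:r-ind-A-X-ge-d-X-inv-p-X} combined with a choice of minimal central projection $p'_X \le p_X$ for each $X \in \supp(A)$. The paper's proof is terser and simply asserts the multiplicativity step you flag as the main technical point, so your version is in fact more careful on that issue.
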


\begin{proof}
We have $\rInd(A) = \sum_{X \in \supp(A)} \rInd(X \otimes \bA(X))$ and $\rInd(X \otimes \bA(X)) = \rInd(X) \otimes \rInd(\bA(X))$.
Then the claim follows from Lemma~\ref{lem:r-ind-X-d-X} and Proposition~\ref{prop:r-ind-A-X-ge-d-X-inv-p-X}, by picking $p'_X \in \sigma(Z(\bA(1_\cD)))$ satisfying $p'_X \le p_X$ for each $X \in \supp(A)$.
\end{proof}

If $D \subset C$ is an inclusion of C$^*$-algebras with a conditional expectation $E\colon C \to D$ of finite index $\lambda$, $C$ becomes a bi-Hilbertian $C$-$D$-module and $\norm{\rInd(C)} = \lambda^{\prime -1}$ holds for a (possibly different) constant $\lambda' > 0$ such that $E - \lambda' \id_C$ is completely positive~\cite{MR2085108}*{Proposition 2.12 and remark after Definition 2.17}.
Note also that if $E$ is of finite index, then $C$ becomes a bi-Hilbertian $D$-bimodule such that $\rInd({}_D C_D) = \rInd({}_C C_D)$.

\begin{corollary}\label{cor:finite-index-implies-fusion-cat}
The conditional expectation $\bE \colon A \to B_0 \otimes (B_0' \cap A)$ is of finite index if and only if both $\cC_-$ and $\cC_+$ are unitary fusion categories.
\end{corollary}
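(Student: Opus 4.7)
The plan is to combine the decomposition
\[
\rInd({}_{B_0} A_{B_0}) = \sum_{X \in \Irr(\cD)} d(X)\, \rInd(\bA(X)),
\]
which follows from the $B_0$-bimodule decomposition $A \simeq \bigoplus_X X \otimes \bA(X)$ of Corollary~\ref{cor:decomp-A-as-B0-bimod}, additivity of $\rInd$ over direct sums, and Lemma~\ref{lem:r-ind-X-d-X}, with the equivalence recalled in the paragraph preceding the corollary between finite index of $\bE$ and boundedness of $\norm{\rInd({}_{B_0} A_{B_0})}$.

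For the converse, suppose $\cC_\pm$ are unitary fusion, so that $\Irr(\cD) = \Irr(\cC_-) \times \Irr(\cC_+)$ is finite. Rationality (Condition~\ref{cond:rationality}) provides $\sum_X d(X)\dim_\bC \bA(X) < \infty$, and consequently $\dim_\bC \bA(X) < \infty$ for every $X$. The relative commutant $\bA(1_\cD) = B_0' \cap A$ is finite dimensional by the standard theory of finite-index C$^*$-inclusions with simple base, the simplicity of $B_0$ being a consequence of Condition~\ref{cond:B0-is-simple}. Each $\rInd(\bA(X))$ is therefore automatically a bounded element of $\bA(1_\cD)$, and the displayed sum is a finite sum of bounded elements, hence bounded. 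So $\bE$ is of finite index.

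For the forward direction, assume $\bE$ has finite index, so $\norm{\rInd({}_{B_0} A_{B_0})}$ is finite. Theorem~\ref{thm:indexcanonicalcondexp} yields $|S_p| \le \norm{\rInd({}_{B_0} A_{B_0})}$ for each minimal central projection $p \in \sigma(Z(\bA(1_\cD)))$. Finite dimensionality of $\bA(1_\cD)$ makes $\sigma(Z(\bA(1_\cD)))$ finite, and therefore $|\supp(A)| = \sum_p |S_p| < \infty$. To pass from finiteness of $\supp(A) \subset \Irr(\cC_-) \times \Irr(\cC_+)$ to finiteness of each $\Irr(\cC_\pm)$, I would verify that the projections $\supp(A) \to \Irr(\cC_\pm)$ are surjective. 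Given $Z \in \Irr(\cC_-)$, pick a DHR bimodule $Y$ with $Z$ appearing as an $A_-$-submodule of $Y_-$; the surjective $B_0$-bimodule pairing $(Y^*)_- \otimes Y_+ \to A$ from Proposition~\ref{prop:decomp-A-as-B0-bimod} (available under charge-transporter generation, Condition~\ref{cond:reduced-net}), together with semisimplicity of $A$ as a $B_0$-bimodule, then forces an irreducible $B_0$-summand of $A$ whose $\cC_-$-factor contains $Z$ to lie in $\supp(A)$; the analogous argument handles $\cC_+$.

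I expect the main obstacle to be this last surjectivity step: carefully tracking which irreducible $\cD$-summands of $(Y^*)_- \otimes Y_+$ survive under the pairing into $A$, and aligning them with the decomposition of $Y$ via $Y \simeq Y_- \boxtimes_{A_-} A$, requires some delicate bookkeeping of the tensor structures in play, and is where the charge-transporter generation assumption is doing the real work.
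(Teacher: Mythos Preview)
Your outline matches the paper's approach closely: both directions hinge on the equivalence ``$\bE$ has finite index $\Leftrightarrow$ $\supp(A)$ is finite'' drawn from Theorem~\ref{thm:indexcanonicalcondexp} together with finite-dimensionality of $\bA(1_\cD)$, followed by surjectivity of the projections $\supp(A) \to \Irr(\cC_\pm)$.

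The obstacle you flag in the last paragraph is not serious. The paper dispatches the nondegeneracy in one line: for any nonzero $\xi \in X^Y_-$, take a basis $(\eta_i)_i$ localized in a positive interval and expand $\xi = \sum_i \eta_i \braket{\eta_i|\xi}$; some $\braket{\eta_i|\xi}$ must be nonzero, so the pairing $X^Y_- \otimes \overline{X^Y}_+ \to A$, $\xi \otimes \bar\eta \mapsto \braket{\eta|\xi}$, is nondegenerate in the first slot. Restricting to the $Y$-isotypic part of $X^Y_-$ produces some $Y' \in \Irr(\cC_+)$ with $Y \boxtimes Y' \in \supp(A)$. (Minor correction: the individual pairing is not surjective onto $A$; Proposition~\ref{prop:decomp-A-as-B0-bimod} asserts surjectivity only for a direct sum over many DHR bimodules.)

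One genuine divergence: for $\Irr(\cC_+)$ you propose the mirror pairing argument, which is valid and self-contained. The paper instead establishes only the $\cC_-$ surjectivity and then invokes a forward reference to Proposition~\ref{prop:F_fully_faithful}: if $\Irr(\cC_-)$ is finite, then $\DHR(A_\bullet)$ embeds fully into $\cZ(\cC_-)$, hence is fusion, forcing $\Irr(\cC_+)$ to be finite. Your route avoids that forward reference; the paper's route ties the corollary to the main structural theorem.

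In the converse direction, the sentence ``Rationality provides $\sum_X d(X)\dim_\bC \bA(X) < \infty$'' is neither needed nor obviously justified as stated. All that is required is $\supp(A) \subset \Irr(\cD)$ finite (immediate from the fusion hypothesis) and that each $\rInd(\bA(X))$ is a bounded element of the finite-dimensional algebra $\bA(1_\cD)$; the displayed sum is then finite with bounded summands.
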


\begin{proof}
By the above estimate, $\bE$ is of finite index if and only if $\supp(A)$ is a finite set.
Recall that the irreducible objects of $\cD = \cC_- \boxtimes \cC_+$ can be parameterized as 
\[
\Irr(\cD) = \set{ Y \boxtimes Y' | Y \in \Irr(\cC_-), Y' \in \Irr(\cC_+) }.
\]

Given $Y \in \Irr(\cC_-)$, let us take a DHR bimodule $X^Y$ such that $Y$ appears as a direct summand of $X^Y_-$.
By Section~\ref{sec:duality}, we have $\overline{X^Y} \in \DHR(A_{\bullet})$ as well.
Then $X^Y_- \otimes \overline{X^Y}_+$ maps to $A$ by the $B_0$-bimodule map
\[
\xi \otimes \overline{\eta} \mapsto \braket{ \eta | \xi }.
\]
Moreover, if $\xi$ is nonzero, then we can find $\overline{\eta}$ such that this pairing is nonzero.
Indeed, by choosing a basis $(\eta_i)_i$ localized in a positive interval, the equality $\xi = \sum_i \eta_i \braket{ \eta_i | \xi }$ implies that $\braket{ \eta_i | \xi } \neq 0$ should happen for some $i$.
Thus, for each $Y$ we have some $Y' \in \Irr(\cC_+)$ such that $\bA(Y \boxtimes Y')$ is nonzero.
This shows that, if $\Irr(\cC_-)$ is an infinite set, then $\supp(A)$ is also infinite.

It remains to show that $\Irr(\cC_-)$ is infinite if and only if $\Irr(\cC_+)$ is infinite.
By symmetry it is enough to show that the finiteness of $\Irr(\cC_-)$ implies the same for $\Irr(\cC_+)$.
If $\Irr(\cC_-)$ is finite, then Proposition~\ref{prop:F_fully_faithful} implies that $\DHR(A_{\bullet})$ is a full subcategory of $\cZ(\cC_-)$, hence is a fusion category.
Then, $\Irr(\cC_+)$ has to be finite as well.
\end{proof}

\subsection{Equivalence between \texorpdfstring{$\mathcal{C}_{+}$}{C+} and \texorpdfstring{$\mathcal{C}_{-}$}{C-} under algebraic Haag duality}

Let us further assume that $A_\bullet$ satisfies the algebraic Haag duality.
We will just use the property $A_- = A \cap A_+'$.

\begin{proposition}\label{prop:F-min-is-A-plus-central}
Given $X \in \DHR(A_\bullet)$, the $A_-$-bimodule $X_-$ can be described as
\[
X \cap A_+' = \set{ \xi \in X | \forall a \in A_+,   a \xi = \xi a },
\]
that is, the subspace of $A_+$-central vectors in $X$.
\end{proposition}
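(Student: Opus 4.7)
The plan is to prove both inclusions $X_- \subseteq X \cap A_+'$ and $X \cap A_+' \subseteq X_-$ separately, using a projective basis localized far to the left. Throughout I fix a projective basis $(\xi_i)_i$ for $X$ localized in some finite subset $F \subset (-\infty, -L-1]$, which exists by the definition of DHR bimodule (and since under algebraic Haag duality we can take $R = 0$).

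For the inclusion $X_- \subseteq X \cap A_+'$, a general element of $X_-$ is of the form $\xi = \sum_i \xi_i a_i$ with $a_i \in A_-$. Given $a \in A_+$, the element $a$ commutes with each $\xi_i$ by the localization condition (since $F \subset (-\infty,-1]$ and $A_+ = A_{[0,\infty)}$) and with each $a_i \in A_-$ by the locality axiom of the spin chain. Hence $a\xi = \sum_i \xi_i a_i a = \xi a$, and this direction follows.

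The crucial direction is $X \cap A_+' \subseteq X_-$. Let $\xi \in X$ satisfy $a\xi = \xi a$ for all $a \in A_+$. By the projective basis property, $\xi = \sum_i \xi_i \langle \xi_i | \xi \rangle$, so it suffices to show $\langle \xi_i | \xi \rangle \in A_-$ for all $i$. For $a \in A_+$, using the right $A$-linearity of the inner product in the second slot and conjugate-linearity via left action in the first slot,
\[
\langle \xi_i | \xi \rangle a = \langle \xi_i | \xi a \rangle = \langle \xi_i | a\xi \rangle = \langle a^* \xi_i | \xi \rangle = \langle \xi_i a^* | \xi \rangle = a \langle \xi_i | \xi \rangle,
\]
where the second equality uses $\xi \in A_+'$, and the fourth uses that $a^*$ commutes with $\xi_i$ by the localization of $\xi_i$. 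Thus $\langle \xi_i | \xi \rangle \in A \cap A_+'$, which equals $A_-$ by algebraic Haag duality. Therefore $\xi \in X_-$.

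The argument is essentially a one-line consequence of algebraic Haag duality once the right computation with the inner product is set up; the only mild subtlety is tracking which conjugates appear when moving a right-action element through the inner product to a left-action element, which is handled by the standard identity $\langle \xi | a\eta \rangle = \langle a^*\xi | \eta \rangle$ for the left action on a correspondence. No obstacle of substance arises.
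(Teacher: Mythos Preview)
Your proof is correct and follows essentially the same approach as the paper's: both directions use a projective basis localized to the left, and the key step for the reverse inclusion is showing $\langle \xi_i \mid \xi \rangle \in A \cap A_+' = A_-$ via algebraic Haag duality. The paper's proof is terser (it calls the easy inclusion ``obvious by construction'' and does not write out the inner-product computation), but the argument is the same.
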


\begin{proof}
The inclusion of $X_-$ in $X \cap A_+'$ is obvious by construction, so let us show the reverse inclusion.
Let us fix a basis $(\xi_i)$ localized in $F \subset (-\infty, -1]$, and $\eta$ be a vector in $X \cap A_+'$.
Then the inner product $\braket{ \xi_i | \eta }$ commutes with $A_+$, and by the algebraic Haag duality, belongs to $A_-$.
This shows that $\eta$ is in the span of $\xi_i A_-$, which shows the claim.
\end{proof}
Given an $A_-$-bimodule map $f\colon X_-\to Y_-$ and a basis $(\xi_j)_j$ of $X$ localized in $F \subset (-\infty, -1]$ define 
\begin{equation}
 \tilde{f}\colon X \to Y,\quad x \mapsto \sum_j f(\xi_j) \braket{\xi_j | x}\,. 
\end{equation}
\begin{proposition}\label{prop:Mor_B0}
Let $A_\bullet$ be an abstract spin system satisfying algebraic Haag duality.
Given $X, Y \in \DHR(A_\bullet)$, the assignment 
\begin{equation}\label{eq:C_morphism_spaces}
\Hom_{\cC}(X_-, Y_-) \to \Hom_{B_0 \mhyph A}(X, Y),\quad f \mapsto \tilde{f}
\end{equation}
is a natural isomorphism.
\end{proposition}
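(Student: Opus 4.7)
The plan is to construct an explicit inverse to the claimed map and verify both compositions are the identity, together with bimodule compatibility and well-definedness.

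First I would check that $\tilde{f}$ is a well-defined $B_0$-$A$-bimodule map from $X$ to $Y$, independent of the chosen basis. Independence of the basis follows from a charge-transporter calculation: if $(\xi'_k)_k$ is another basis of $X$ localized deep in $(-\infty,-1]$, then the matrix entries $t_{kj} = \braket{\xi'_k|\xi_j}$ lie in $A_-$ (as in the proof of Proposition~\ref{prop:X-minus-as-Hilb-bimod} or directly from algebraic Haag duality), so $A_-$-linearity of $f$ together with the reproducing property $\sum_j t_{kj}\braket{\xi_j|x} = \braket{\xi'_k|x}$ converts one expression into the other. Right $A$-linearity of $\tilde{f}$ is immediate from the right $A$-linearity of the inner product. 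For left $A_-$-linearity, given $a \in A_-$, I would expand $a\xi_j = \sum_k \xi_k\braket{\xi_k|a\xi_j}$ inside $X_-$ and use the reproducing property to rewrite $\sum_j\braket{\xi_k|a\xi_j}\braket{\xi_j|x} = \braket{\xi_k|ax}$. For left $A_+$-linearity, I would use that each $\xi_j$ is $A_+$-central (so $a\xi_j = \xi_j a$ for $a \in A_+$) and that $f(\xi_j) \in Y_-$ is likewise $A_+$-central by Proposition~\ref{prop:F-min-is-A-plus-central}, so that $a$ commutes past both $f(\xi_j)$ and inside $\braket{\xi_j|\,\cdot\,}$.

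Next I would construct the inverse. Given a $B_0$-$A$-bimodule map $g\colon X \to Y$, define $\Phi(g) = g|_{X_-}$. The key point is that $g$ maps $X_-$ into $Y_-$: by Proposition~\ref{prop:F-min-is-A-plus-central}, $X_-$ consists of the $A_+$-central vectors in $X$, and since $g$ is left $A_+$-linear, it preserves $A_+$-centrality; moreover $g|_{X_-}$ is automatically $A_-$-bilinear. To verify that the two assignments are mutually inverse, for $\xi \in X_-$ I compute
\[
\tilde{f}(\xi) = \sum_j f(\xi_j)\braket{\xi_j|\xi} = f\Bigl(\sum_j \xi_j\braket{\xi_j|\xi}\Bigr) = f(\xi),
\]
where the middle equality uses that $\braket{\xi_j|\xi} \in A_-$ (again by the centrality characterization plus algebraic Haag duality) and the $A_-$-linearity of $f$. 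Conversely, for $g\colon X \to Y$, the right $A$-linearity of $g$ and the projective basis identity give
\[
\widetilde{g|_{X_-}}(x) = \sum_j g(\xi_j)\braket{\xi_j|x} = g\Bigl(\sum_j \xi_j\braket{\xi_j|x}\Bigr) = g(x).
\]

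Naturality of the assignment $f \mapsto \tilde{f}$ in $X$ and $Y$ is a formal check from its definition in terms of inner products and basis vectors. The main subtlety I anticipate is keeping the left $A_+$-linearity honest: this is precisely where algebraic Haag duality enters, via the identification in Proposition~\ref{prop:F-min-is-A-plus-central} that forces $f(\xi_j)$ to be $A_+$-central, which in turn allows $A_+$ to pass across both $f(\xi_j)$ and the subsequent inner product. Without this identification the formula would not produce a $B_0$-bimodule map, and this is the essential use of the hypothesis.
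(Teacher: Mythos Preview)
Your proposal is correct and follows essentially the same approach as the paper: verify basis-independence via charge transporters, check right $A$-linearity, left $A_+$-linearity using $A_+$-centrality of both $\xi_j$ and $f(\xi_j)$ (via Proposition~\ref{prop:F-min-is-A-plus-central}), left $A_-$-linearity by expanding through the projective basis, and construct the inverse as restriction to $X_-$. The paper carries out exactly these steps with the computations written out in full.
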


\begin{proof}
First notice that the definition of $\tilde{f}$ is independent of the choice of projective basis of $X_-$.
This follows from Proposition~\ref{prop:interaction-T-and-P1} relating the charge transporters that connect two different choices of such bases.
That $\tilde{f}$ is a right $A$-module map is an immediate consequence of the $A$-linearity of the inner-product of $X$.
Now, for $b\in A_+$ we have that
\begin{equation}
\begin{aligned}
\tilde{f}(b x)=\sum_j f(\xi_j) \braket{\xi_j | b x}
&= \sum_j f(\xi_j) \braket{b^* \xi_j | x} = \sum_j f(\xi_j) \braket{ \xi_j b^* | x}\\
&= \sum_j f(\xi_j) b \braket{\xi_j | x} = \sum_j b f(\xi_j) \braket{ \xi_j | x} = b \tilde{f}(x),
\end{aligned}
\end{equation}
where the second equality uses that $(\xi_i)$ is negatively localized, and the last equality holds due to Proposition~\ref{prop:F-min-is-A-plus-central} since $f(\xi_j)\in Y_-$.
For $b\in A_-$
\begin{equation}
\begin{aligned}
\tilde{f}(b x)&= \sum_j f(\xi_j) \braket{ b^* \xi_j | x }= \sum_{i,j} f(\xi_j) \braket{\xi_i \braket{\xi_i | b^* \xi_j} | x} \\
&= \sum_{i,j} f(\xi_j) \braket{ \xi_i | b^* \xi_j }^{*} \braket{ \xi_i | x }= \sum_{i,j} f(\xi_j) \braket{ \xi_j | b \xi_i } \braket{ \xi_i | x } \\
&= \sum_{i} f\biggl(\sum_j\xi_j\braket{ \xi_j | b \xi_i }\biggr) \braket{ \xi_i | x }= \sum_{i} f\left(b \xi_i\right) \braket{ \xi_i | x }\\
&= \sum_i b f(\xi_i) \braket{ \xi_i | x } = b \tilde{f}(x),
\end{aligned}
\end{equation}
where the second equality expresses $b^* \xi_j$ in terms of the projective basis, and the fifth and sixth equalities uses the fact that $f$ is an $A_-$-bimodule map.
It follows that $\tilde{f}$ is a $B_0$-module map, and thus the assignment~\eqref{eq:C_morphism_spaces} is well-defined.

Restriction to the negative component of a DHR bimodule is the inverse of~\eqref{eq:C_morphism_spaces}.
Given a $B_0$-$A$-bimodule map $h\colon X\to Y$ and $x\in X_-$, then for every $a\in A_+$ we have that $h(x) a = h(x a)=h(a x)=a h(x)$ by Proposition~\ref{prop:F-min-is-A-plus-central}.
It follows again from Proposition~\ref{prop:F-min-is-A-plus-central} that $h(x)\in Y_-$.
For $f\in \Hom_{\cC}(X_-, Y_-)$ and $h\in\Hom_{B_0 \mhyph A}(X, Y)$, it follows from the definition of a projective basis that $\tilde{f}(x')=f(x')$ for $x'\in X_-$ and that $\tilde{h}(x)=h(x)$ for any $x\in X$.
\end{proof}

As a byproduct, we see that the category of negative half-line bimodules and the one of positive half-line bimodules are equivalent.

\begin{corollary}\label{cor:MorC-C+}
Given $X \in \DHR(A_\bullet)$, let $X_+$ be the $A_+$-bimodule constructed in the same way as $X_-$ from a basis $(\eta_i)_i$ localized in some $F \subset [0, \infty)$.
Let $\cC_+$ be the idempotent completion of the category of $A_+$-bimodules $X_+$.
The assignment
\begin{equation*}
\Hom_{\cC_-}(X_-, Y_-) \to \Hom_{\cC_+}(X_+, Y_+),\quad f\mapsto \tilde{f}|_{X_+}
\end{equation*}
extends to a natural isomorphism between $\cC_-$ and $\cC_+$.
\end{corollary}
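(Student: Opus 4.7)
The plan is to reduce Corollary~\ref{cor:MorC-C+} to Proposition~\ref{prop:Mor_B0}, by observing that the latter has an exact positive-half-line analogue, and that both yield natural isomorphisms landing in the same intermediate morphism space $\Hom_{B_0 \mhyph A}(X, Y)$.

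First, I would invoke Proposition~\ref{prop:Mor_B0} to get a natural isomorphism
\[
\Phi_- \colon \Hom_{\cC_-}(X_-, Y_-) \xrightarrow{\sim} \Hom_{B_0 \mhyph A}(X, Y), \qquad f \mapsto \tilde{f},
\]
whose inverse is restriction $h \mapsto h|_{X_-}$. By the reflection symmetry $(-\infty,-1] \leftrightarrow [0,\infty)$ (the algebraic Haag duality hypothesis $A_- = A \cap A_+'$ has a mirror companion $A_+ = A \cap A_-'$, which follows from the same hypothesis applied to the shifted cut; this is why Proposition~\ref{prop:F-min-is-A-plus-central} also holds with $\pm$ swapped), the identical argument yields a natural isomorphism
\[
\Phi_+ \colon \Hom_{\cC_+}(X_+, Y_+) \xrightarrow{\sim} \Hom_{B_0 \mhyph A}(X, Y), \qquad g \mapsto \tilde{g},
\]
where now $\tilde{g}(x) = \sum_i g(\eta_i) \braket{\eta_i | x}$ for a basis $(\eta_i)_i$ localized in $F \subset [0,\infty)$, with inverse given by restriction $h \mapsto h|_{X_+}$.

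Combining, I define the candidate functor $G \colon \cC_- \to \cC_+$ on morphisms between generating objects by $G(f) = \Phi_+^{-1}(\Phi_-(f)) = \tilde{f}|_{X_+}$, and on generating objects by $X_- \mapsto X_+$. The key check is that $\tilde{f}|_{X_+}$ actually lands in $Y_+$ and is an $A_+$-bimodule map: this is immediate since $\tilde{f}$ is $B_0$-$A$-bilinear and (by Proposition~\ref{prop:F-min-is-A-plus-central} applied with swapped sign) $X_+$ consists of the $A_-$-central vectors of $X$, whose image under the left $A_-$-linear map $\tilde{f}$ is $A_-$-central in $Y$, hence contained in $Y_+$. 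Functoriality of $G$ (compatibility with composition and identities) is a formal consequence of $\Phi_-$ and $\Phi_+$ being natural isomorphisms into the common space $\Hom_{B_0 \mhyph A}(X, Y)$, on which composition of DHR bimodule endomorphisms is simply composition of maps. The functor $G$ is then extended to the idempotent completions in the canonical way.

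Finally, essential surjectivity of $G$ on the generating objects is automatic because a single DHR bimodule $X$ gives rise to both $X_-$ and $X_+$; passage to idempotent completions preserves essential surjectivity since the splittings of a given projection $p$ in $\cC_+$ correspond to the splittings of $G^{-1}(p)$ in $\cC_-$ via full-faithfulness of $G$. The main technical obstacle I anticipate is the verification that $G$ is functorial, i.e.\ that $\widetilde{f \circ g}|_{X_+} = \tilde{f}|_{Y_+} \circ \tilde{g}|_{X_+}$; but this reduces to the identity $\widetilde{f \circ g} = \tilde{f} \circ \tilde{g}$ inside $\Hom_{B_0 \mhyph A}(X,Z)$, which is already contained in the bijectivity of $\Phi_-$ established in Proposition~\ref{prop:Mor_B0} and is independent of the choice of projective basis thanks to Proposition~\ref{prop:interaction-T-and-P1}.
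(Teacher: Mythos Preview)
Your approach is essentially identical to the paper's: both obtain the positive-half-line analogue of Proposition~\ref{prop:Mor_B0} and then compose the two isomorphisms through the common space $\Hom_{B_0\mhyph A}(X,Y)$. Your write-up simply fills in details (the mirror Haag duality, why $\tilde{f}|_{X_+}$ lands in $Y_+$, functoriality, idempotent completion) that the paper's two-sentence proof leaves implicit.
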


\begin{proof}
In analogy to Proposition~\ref{prop:Mor_B0}, the morphism space $\Hom_{\cC_+}(X_+, Y_+)$ is isomorphic to $\Hom_{B_0 \mhyph A}(X, Y)$.
Thus, composing the corresponding isomorphisms of $\cC_-$ and $\cC_+$ morphism spaces the assertion follows.
\end{proof}

\begin{proposition}
If the net $A_\bullet$ fulfills strict algebraic Haag duality, then the C$^*$-tensor categories $\cC_-$ and $\cC_+^\opo$ are tensor equivalent.
\end{proposition}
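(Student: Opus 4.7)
The strategy is to upgrade the C$^*$-categorical equivalence $X_- \mapsto X_+$ from Corollary~\ref{cor:MorC-C+} to a tensor equivalence by using the DHR braiding to install an anti-monoidal structure. Since the natural monoidal structures on $F_-$ and $F_+$ fit together to give a covariant monoidal equivalence $\cC_- \to \cC_+$, one must twist that structure in order to reach $\cC_+^{\mathrm{mop}}$; the braiding of $\DHR(A_\bullet)$ is exactly the piece of data that supplies this twist.

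Concretely, I would take the functor $G\colon \cC_- \to \cC_+^{\mathrm{mop}}$ defined on objects by $G(X_-) = X_+$ and on morphisms by $f \mapsto \widetilde{f}|_{X_+}$, as in Corollary~\ref{cor:MorC-C+}. For the anti-monoidal comparison I would define the natural isomorphism
\[
J_{X, Y}\colon G\bigl((X \boxtimes_A Y)_-\bigr) = (X \boxtimes_A Y)_+ \xrightarrow{\beta_{X,Y}|_+} (Y \boxtimes_A X)_+ \xrightarrow{\sim} Y_+ \boxtimes_{A_+} X_+ = G(X_-) \otimes_{\cC_+^{\mathrm{mop}}} G(Y_-),
\]
where the middle arrow is the restriction of the DHR braiding $\beta_{X,Y}\colon X \boxtimes_A Y \to Y \boxtimes_A X$ to the $A_-$-central subspace (well-defined because $\beta_{X,Y}$ is an $A$-bimodule map and therefore preserves $A_-$-centrality), and the final arrow is the inverse of the monoidal structure on $F_+$.

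Next I would check the monoidal-functor axioms for $(G, J)$. Unit coherence is immediate from unitality of $F_\pm$, and naturality of $J$ in each slot follows from naturality of $\beta$ together with bifunctoriality of $\boxtimes_{A_+}$. The main point is the hexagon: writing out the two bracketings of $G(X_- \boxtimes_{A_-} Y_- \boxtimes_{A_-} Z_-) \to Z_+ \boxtimes_{A_+} Y_+ \boxtimes_{A_+} X_+$ via $J$ and the associator of $\cC_+^{\mathrm{mop}}$, and identifying $(W_1 \boxtimes_A W_2 \boxtimes_A W_3)_+$ with $W_1{}_{+} \boxtimes_{A_+} W_2{}_{+} \boxtimes_{A_+} W_3{}_{+}$ via iterated applications of the monoidal structure of $F_+$, both composites become the two sides of the hexagon identity
\[
\beta_{X, Y \boxtimes_A Z} = (1_Y \boxtimes \beta_{X, Z}) \circ (\beta_{X, Y} \boxtimes 1_Z)
\]
for the braided category $\DHR(A_\bullet)$. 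Finally, $G$ is fully faithful by Corollary~\ref{cor:MorC-C+}, and essentially surjective since every object of $\cC_+$ is a subobject of $X_+$ for some $X \in \DHR(A_\bullet)$, whose preimage is the corresponding subobject of $X_-$.

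\textbf{Main obstacle.} The real technical work is in the hexagon verification: the monoidal structure of $F_+$ on triple products interacts with the braiding through explicit basis expansions, so care is required to track how restriction to the $A_-$-invariant subspace commutes with the basis decomposition $(X \boxtimes_A Y \boxtimes_A Z)_+ \simeq X_+ \boxtimes_{A_+} Y_+ \boxtimes_{A_+} Z_+$. Choosing positively localized bases for $X$, $Y$, $Z$ and reducing to how $\beta$ permutes them factor by factor (using its naturality in each slot) should make the hexagon of $J$ reduce cleanly to the hexagon of $\beta$, completing the proof.
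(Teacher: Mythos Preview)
Your approach is the same as the paper's: define the functor on objects and morphisms via Corollary~\ref{cor:MorC-C+}, and install the anti-monoidal structure by restricting the DHR braiding $\beta_{X,Y}$ to the positive half. However, you have misidentified where the work lies, and this creates a genuine gap.

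You assert that ``naturality of $J$ in each slot follows from naturality of $\beta$''. But the DHR braiding is natural only for $A$-bimodule homomorphisms, whereas morphisms in $\cC_-$ are merely $A_-$-bimodule maps. For $f\in\Hom_{\cC_-}(X_-,X'_-)$, the extension $\tilde f\colon X\to X'$ coming from Proposition~\ref{prop:Mor_B0} is only a $B_0$-$A$-bimodule map, not an $A$-bimodule map, so the naturality of $\beta$ cannot be invoked off the shelf. What actually has to be verified is commutativity of
\[
\begin{tikzcd}
(X\boxtimes_A Y)_+ \ar[d,swap,"G(f\otimes g)"]\ar[r,"\beta"]&Y_+\boxtimes_{A_+}X_+\ar[d,"G(g)\otimes G(f)"]\\
(X'\boxtimes_A Y')_+\ar[r,"\beta",swap]&Y'_+\boxtimes_{A_+}X'_+
\end{tikzcd}
\]
for arbitrary $f,g$ in $\cC_-$, where the left vertical arrow is $\widetilde{(f\boxtimes_{A_-} g)}$ restricted to $(X\boxtimes_A Y)_+$. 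This is exactly the content of the paper's proof: one chooses positively localized bases, expands $\widetilde{(f\otimes g)}$ via charge transporters, and uses that $\tilde g(\eta)\in Y'_+$ is $A_-$-central to move coefficients past basis vectors. By contrast, the hexagon you flag as the ``main obstacle'' is essentially free: the hexagon identity for $\beta$ is an equality of $A$-bimodule maps and restricts to $(\cdot)_+$ without further argument.
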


\begin{proof}
The functor given by the assignment
\begin{equation*}
\Omega\colon \cC_- \to \cC_+^\opo, \quad
X_-\mapsto X_+, \quad
f\mapsto \tilde{f} |_{X_+} \quad (X \in \DHR(A_\bullet))
\end{equation*}
is essentially surjective by definition, and fully faithful by Corollary~\ref{cor:MorC-C+}, and thus, an equivalence of categories.
The restriction of the DHR braiding $\beta\colon (X \boxtimes_A Y)_+ \to Y_+ \boxtimes_{A_+} X_+$ endows $\Omega$ with a monoidal structure.
Indeed, $\beta$ obeys the corresponding hexagon axiom and is natural for $A_-$-bimodule maps, that is, the diagram
\begin{equation*}
\begin{tikzcd}
(X\boxtimes_A Y)_+ \ar[d,swap,"\Omega(f\otimes g)"]\ar[r,"\beta"]&Y_+\boxtimes_{A_+}X_+\ar[d,"\Omega(g)\otimes \Omega(f)"]\\
(X'\boxtimes_A Y')_+\ar[r,"\beta",swap]&Y'_+\boxtimes_{A_+}X'_+
\end{tikzcd}
\end{equation*}
commutes for every $f\in\Hom_{\cC_-} (X_-, X'_-)$ and $g\in \Hom_{\cC_-} (Y_-, Y'_-)$.
In order to check this, choose bases $(\eta_l^X)_l$ and $(\eta_k^Y)_k$ of $X$ and $Y$, respectively localized in $F_1<F_2\subset [0,\infty)$.
On one hand, we have that
\[
\Omega(g)\otimes \Omega(f)\circ \beta (\eta_t^X\otimes\eta_s^Y)=\tilde{g}(\eta_s^Y)\otimes \tilde{f}(\eta_t^X)
\]
On the other hand, $\Omega(f\otimes g)$ is explicitly given by
\[
\Omega(f\otimes g)(\eta_t^X\otimes\eta_s^Y) = \sum_{i,j} f(\xi_i^X) \otimes g(\xi^Y_j) \braket{\xi_j^Y | \braket{\xi_i^X | \eta_t^X} \eta_s^Y}.
\]
Now, $\eta_s^Y$ is localized to the right of $\eta_t^X$ and thus
\[
\braket{\xi_j^Y | \braket{\xi_i^X | \eta_t^X} \eta_s^Y} = \braket{ \xi_j^Y | \eta_s^Y }\braket{ \xi_i^X | \eta_t^X },
\]
which in turn leads to
\[
\Omega(f\otimes g)(\eta_t^X\otimes\eta_s^Y)=\sum_{i} f(\xi_i^X)\otimes \tilde{g}(\eta_s^Y)\braket{ \xi_i^X | \eta_t^X }.
\]
Now, by writing $\Omega(f\otimes g)$ in terms of the basis $(\eta_k^X\otimes\eta_l^Y)_{l,k}$, we obtain that
\[ 
\beta\circ \Omega(f\otimes g)(\eta_t^X\otimes\eta_s^Y) =
\sum_{i}\sum_{l,k} \eta_l^Y \otimes \eta_k^X \braket{\eta_l^Y | \braket{\eta_k^X | f(\xi_i^X)} \tilde{g}(\eta_s^Y)} \braket{ \xi_i^X | \eta_t^X }.
\]
Since $\eta_l^Y$ is localized to the right of $\eta_k^X$, it commutes with the adjoint of $\braket{ \eta_k^X | f(\xi_i^X) }$ and therefore
\[
\braket{\eta_l^Y | \braket{\eta_k^X | f(\xi_i^X)} \tilde{g}(\eta_s^Y)} = \braket{ \eta_k^X | f(\xi_i^X) }\braket{ \eta_l^Y | \tilde{g}(\eta_s^Y) }.
\]
After summing over $k$ we have that
\[ 
\beta\circ \Omega(f\otimes g)(\eta_t^X\otimes\eta_s^Y) =
\sum_{i,j}\sum_{l} \eta_l^Y\otimes f(\xi_i^X) \braket{ \eta_l^Y | g(\xi_j^Y) } \braket{ \xi_j^Y | \eta_s^Y }\braket{ \xi_i^X | \eta_t^X }.
\]
Finally, since $\tilde{g}(\eta_s^Y)$
is positively localized, then
\[ 
\beta\circ \Omega(f\otimes g)(\eta_t^X\otimes\eta_s^Y)=
\sum_{i}\sum_{l} \eta_l^Y \braket{ \eta_l^Y | \tilde{g}(\eta_s^Y) } \otimes f(\xi_i^X) \braket{ \xi_i^X | \eta_t^X }=
\tilde{g}(\eta_s^Y)\otimes \tilde{f}(\eta_t^X),
\]
which proves the assertion.
\end{proof}

\section{Comparison with the Drinfeld center}\label{sec:realiz-center}

\subsection{Fully faithful embedding into the center}

Our next goal is to show that the category of DHR bimodules over a net satisfying Conditions~\ref{cond:B1-is-DHR} and~\ref{cond:reduced-net} can be realized as the Drinfeld center of an appropriate C$^*$-tensor category.
The ansatz again comes from the correspondence between the regular half braiding $Z^\reg$ and the basic construction $B_1$.
Recall that when $\cC$ is a fusion category, $Z^\reg$ is an algebra in $\cZ(\cC)$ such that the category of $Z^\reg$-modules in $\cZ(\cC)$ is equivalent to $\cC$.
The forgetful functor $\cZ(\cC) \to \cC$ corresponds to the free module functor $Z \mapsto Z^\reg \otimes Z$.

\begin{lemma}\label{lem:nat_restr_braiding}
Given $X,Y\in \DHR(A_\bullet)$, the restriction 
\[c_{Y_-} \colon X_- \boxtimes_{A_-} Y_- \to Y_- \boxtimes_{A_-} X_-\]
of the DHR braiding $\beta$ is natural in $Y$ for $A_-$-bimodule maps and natural in $X$ for $A$-bimodule homomorphisms.
\end{lemma}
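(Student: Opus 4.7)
The plan is to prove the two naturality claims separately. The naturality in $X$ for $A$-bimodule maps will be formal, dropping out of the general naturality of the DHR braiding on $\DHR(A_\bullet)$; the naturality in $Y$ for $A_-$-bimodule maps is the substantive part, and will rest on Proposition~\ref{prop:A-min-hom-locality} together with the freedom to place the $X$-basis arbitrarily far to the left.

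For naturality in $X$, suppose $f \colon X \to X'$ is a morphism in $\DHR(A_\bullet)$. Naturality of $\beta$ in $\DHR(A_\bullet)$ gives
\[
(\id_Y \boxtimes f) \circ \beta_{X, Y} = \beta_{X', Y} \circ (f \boxtimes \id_Y)
\]
on $X \boxtimes_A Y$. Since $f$ is an $A$-bimodule map it sends $X_-$ into $X'_-$ (for instance via the characterization in Proposition~\ref{prop:F-min-is-A-plus-central} of $X_-$ as the space of $A_+$-central vectors), so the identity restricts to the desired naturality square for $c_{Y_-}$ on the sub-$A_-$-bimodule $X_- \boxtimes_{A_-} Y_- \subset X \boxtimes_A Y$.

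For naturality in $Y$, let $g \colon Y_- \to Y'_-$ be an $A_-$-bimodule map. I would choose bases $(\eta_j)_j$ of $Y$ and $(\eta'_k)_k$ of $Y'$ both localized in a common interval $[-a, -R-L-1]$, so that Proposition~\ref{prop:A-min-hom-locality} yields
\[
a_{kj} := \braket{ \eta'_k | g(\eta_j) } \in A_{[-(a+R), -1]},
\]
together with the expansion $g(\eta_j) = \sum_k \eta'_k a_{kj}$. Next pick a basis $(\xi_i)_i$ of $X$ localized in some $F_X \subset (-\infty, -(a+R)-1]$; this is possible because $X$ is DHR. Two consequences follow: (i) each $\xi_i$ is central for $A_{F_X^c}$, which contains $A_{[-(a+R), -1]}$, so $\xi_i a_{kj} = a_{kj} \xi_i$; and (ii) $F_X$ lies strictly to the left of $[-a, -R-L-1]$, so the explicit formula for the DHR braiding gives $c_{Y_-}(\xi_i \boxtimes \eta_j) = \eta_j \boxtimes \xi_i$ and $c_{Y'_-}(\xi_i \boxtimes \eta'_k) = \eta'_k \boxtimes \xi_i$.

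Chasing the diagram, one side sends $\xi_i \boxtimes \eta_j$ to $g(\eta_j) \boxtimes \xi_i = \sum_k \eta'_k \boxtimes a_{kj} \xi_i$ (using the $A_-$-balance over $\boxtimes_{A_-}$), while the other produces $\sum_k \eta'_k \boxtimes \xi_i a_{kj}$ after applying $c_{Y'_-}$ term by term. These agree by (i), and since the $A_-$-linear combinations of the $\xi_i \boxtimes \eta_j$ exhaust $X_- \boxtimes_{A_-} Y_-$, the naturality square commutes. The main obstacle is precisely that $g$ is only assumed $A_-$-bimodular, so naturality of $\beta$ cannot be invoked directly; Proposition~\ref{prop:A-min-hom-locality} provides the locality control on the matrix coefficients of $g$, and the DHR property is what lets us push $(\xi_i)_i$ far enough to the left to enforce the commutation in (i).
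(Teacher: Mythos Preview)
Your argument for naturality in $Y$ is essentially the same as the paper's: use Proposition~\ref{prop:A-min-hom-locality} to pin the matrix coefficients of the $A_-$-bimodule map into a fixed interval $A_{[-b,-1]}$, then slide the $X$-basis far enough left to make it commute with those coefficients, reducing to the standard braiding-naturality computation. The paper phrases this more tersely by citing~\cite{2304.00068}*{Theorem 3.13} for the final step, but your explicit chase of the square is exactly what that reference unpacks to.

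One small caveat on the $X$-naturality part: you invoke Proposition~\ref{prop:F-min-is-A-plus-central} to argue $f(X_-) \subset X'_-$, but that proposition lives in Section~\ref{sec:equiv-half-line-cats-strict-haag-duality} and assumes \emph{strict} algebraic Haag duality, which is not in force at this point (only weak Haag duality and the covering property are). The conclusion you need is still available without it, since $F_-$ is already a functor (Definition~\ref{def:X_-}), so an $A$-bimodule morphism $f$ automatically restricts to $X_- \to X'_-$; alternatively one can argue directly that $\braket{\xi'_j \mid f(\xi_i)} \in A_-$ via weak Haag duality as in the proof of Proposition~\ref{prop:X-minus-as-Hilb-bimod}. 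Just replace that parenthetical reference and the argument is clean.
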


\begin{proof}
Let $T\colon Y_- \to Y'_-$ be a homomorphism of $A_-$-bimodules.
Let $F_1 < F_2$ be regions in $(-\infty, -1]$, and pick bases $(\xi_i)_i \subset X$, $(\eta_j)_j \subset Y$ localized in $F_1$ and $F_2$, respectively.
Then, by Proposition~\ref{prop:A-min-hom-locality}, when $(\eta'_k)_k \subset Y'$ is a basis localized in $F_2$, the coefficients $\braket{ \eta'_k | T(\eta_j) }$ belong to $A_{[-b, -1]}$ for some $b$.
By arranging $F_1$ to be disjoint from $[-b, -1]$, we can follow the same argument as the naturality of braiding for $A$-bimodule homomorphisms~\cite{2304.00068}*{Theorem 3.13}.
\end{proof}

\begin{remark}
If we try to apply $A_-$-bimodule homomorphisms to $X$, the above argument breaks down in the following way: $T(\xi_i)$ would be supported in $[-a, -1]$ that contains $F_2$, and the argument of~\cite{2304.00068}*{Theorem 3.13} does not carry over.
\end{remark}

To simplify the notation, we will write $\cC$ for $\cC_-$ from now on.

\begin{proposition}
The functor $F_-$ induces a braided monoidal functor
\[
\tilde{F} \colon \DHR(A_\bullet) \to \cZ(\cC), \quad X\mapsto (X_-,c).
\]
\end{proposition}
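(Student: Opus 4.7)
The plan is to lift the monoidal functor $F_-$ through the forgetful functor $\cZ(\cC) \to \cC$ by equipping each $F_-(X) = X_-$ with a natural half-braiding $c^X$ obtained by restricting the DHR braiding. Once $c^X$ is constructed and its defining properties verified, the remaining claims reduce to the corresponding properties of $\beta$ already established in~\cite{2304.00068}.

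First, for $Y' \in \DHR(A_\bullet)$ I would set
\[
c^X_{Y'_-} \colon X_- \boxtimes_{A_-} Y'_- \to Y'_- \boxtimes_{A_-} X_-
\]
to be the restriction of the DHR braiding $\beta_{X, Y'}$, transported across the monoidal isomorphism $X_- \boxtimes_{A_-} Y'_- \simeq (X \boxtimes_A Y')_-$ already established for $F_-$. To see that this restriction lands in $Y'_- \boxtimes_{A_-} X_-$, pick bases $(\xi_i)_i \subset X$ and $(\eta_j)_j \subset Y'$ localized in disjoint intervals $F_1 < F_2 \subset (-\infty, -1]$; then $\beta_{X, Y'}(\xi_i \boxtimes \eta_j) = \eta_j \boxtimes \xi_i$ visibly lies in $Y'_- \boxtimes_{A_-} X_-$. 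Invertibility of $\beta_{X, Y'}$ makes $c^X_{Y'_-}$ an $A_-$-bimodule isomorphism.

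To extend $c^X$ to the idempotent completion $\cC$, note that for a projection $p \in \End_{A_-\mhyph A_-}(Y'_-)$ cutting out an object $Y = pY'_- \in \cC$, Lemma~\ref{lem:nat_restr_braiding} (naturality in $Y$ for $A_-$-bimodule maps) yields $c^X_{Y'_-} \circ (\id_{X_-} \boxtimes p) = (p \boxtimes \id_{X_-}) \circ c^X_{Y'_-}$, so $c^X_{Y'_-}$ restricts to an isomorphism $c^X_Y$ between the corresponding direct summands, independent of the containing $Y'$ by the same naturality. Naturality of $c^X$ with respect to arbitrary morphisms in $\cC$ follows from the same lemma. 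The hexagon axiom for $c^X$, in the form $c^X_{Y \boxtimes_{A_-} Z} = (\id_Y \boxtimes c^X_Z)(c^X_Y \boxtimes \id_Z)$, reduces on generators $Y = Y'_-$, $Z = Z'_-$ to the hexagon identity for $\beta$, and then propagates through the idempotent completion by naturality.

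For the functoriality and monoidality of $\tilde F$, the second half of Lemma~\ref{lem:nat_restr_braiding} (naturality in $X$ for $A$-bimodule maps) ensures that every morphism $f \colon X \to X'$ in $\DHR(A_\bullet)$ restricts to a morphism $(X_-, c^X) \to (X'_-, c^{X'})$ in $\cZ(\cC)$. The monoidal coherence isomorphisms of $\tilde F$ are inherited from those of $F_-$, and their compatibility with the product half-braiding on $\cZ(\cC)$ is another instance of the hexagon for $\beta$. Finally, $\tilde F$ is braided: by definition~\eqref{eq:braiding_center} the Drinfeld braiding at $(X_-, c^X)$ and $(Y_-, c^Y)$ is $c^X_{Y_-}$, which is precisely the restriction of $\beta_{X, Y}$, i.e.\ $\tilde F(\beta_{X, Y})$. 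The main technical subtlety, I expect, is keeping clean the asymmetric naturality afforded by Lemma~\ref{lem:nat_restr_braiding} -- the $Y$-slot is only $A_-$-natural, which is exactly what makes $(X_-, c^X)$ an object of $\cZ(\cC)$ rather than of some weaker category, while the $X$-slot is only $A$-natural, matching the morphisms $\tilde F$ is required to see -- but all the substantive input has already been absorbed into that lemma, leaving only bookkeeping.
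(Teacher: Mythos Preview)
Your proposal is correct and follows essentially the same approach as the paper: define the half-braiding on $X_-$ by restricting the DHR braiding, invoke Lemma~\ref{lem:nat_restr_braiding} for well-definedness and naturality, inherit the monoidal structure from $F_-$, and read off the braided property from the definition~\eqref{eq:braiding_center} of the braiding in $\cZ(\cC)$. You spell out more explicitly than the paper does the extension of $c^X$ to the idempotent completion and the role of the asymmetric naturality in Lemma~\ref{lem:nat_restr_braiding}, but these are elaborations of the same argument rather than a different route.
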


\begin{proof}
For $X\in \DHR(A_\bullet)$, we assign as a half braiding the restriction $c_{Y_-} \colon X_- \boxtimes_{A_-} Y_- \to Y_- \boxtimes_{A_-} X_-$ of the braiding on $\DHR(A_\bullet)$.
By Lemma~\ref{lem:nat_restr_braiding} this is a well-defined assignment.
The monoidal structure of $F_-$ induces a monoidal structure on $\tilde{F}$.
Finally, to see that $\tilde{F}$ is braided notice that the braiding in $\cZ(\cC)$ is defined via the half braiding~\eqref{eq:braiding_center}.
But, by construction, the half braiding of an object $X_-$ in $\cZ(\cC)$ comes from the braiding in $\DHR(A_\bullet)$.
\end{proof}

\begin{proposition}\label{prop:F_fully_faithful}
Suppose that $A_\bullet$ satisfies charge-transporter generation (Condition~\ref{cond:reduced-net}).
Then the functor $\tilde{F}$ is fully faithful.
\end{proposition}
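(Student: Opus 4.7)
The plan is to handle faithfulness and fullness separately. For faithfulness, I note that any basis of $X$ localized sufficiently far to the left also serves as a projective $A_-$-basis of $X_-$, so $X = X_- \cdot A$ as a right $A$-module. Consequently any $A$-bimodule map $f \colon X \to Y$ is determined by its restriction to $X_-$, and $\tilde{F}$ is faithful.

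For fullness, given $g \colon X_- \to Y_-$ in $\cZ(\cC)$, I would define $\tilde{g}\colon X \to Y$ by the formula
\[
\tilde{g}(x) = \sum_i g(\xi_i) \braket{\xi_i | x}
\]
for a basis $(\xi_i)_i$ of $X$ localized sufficiently far to the left. Independence of basis choice, right $A$-linearity, and left $A_-$-linearity follow by the routine arguments from the proof of Proposition~\ref{prop:Mor_B0}; left $A_+$-linearity is automatic because both $\xi_i \in X_-$ and $g(\xi_i) \in Y_-$ commute with $A_+$. So $\tilde{g}$ is already a $B_0$-$A$-bimodule map without any further assumption.

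The hard part is to verify that $\tilde{g}$ commutes with every charge transporter $t_{lm} = \braket{\zeta_l | \zeta'_m}$ coming from a DHR bimodule $Z$ with bases $(\zeta_l)_l$, $(\zeta'_m)_m$ localized respectively far to the left and far to the right; in view of Condition~\ref{cond:reduced-net}, this will upgrade $\tilde{g}$ to a left $A$-module map. The strategy I would follow is to compute $\beta_{X, Z}(\xi_i \boxtimes \zeta'_m)$ in two ways, for $\xi_i$ a basis vector of $X$ localized still further to the left: once by direct swap as $\zeta'_m \boxtimes \xi_i$, and once by expanding $\zeta'_m = \sum_l \zeta_l t_{lm}$ and invoking right $A$-linearity of $\beta$. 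Pairing the resulting equality in $Z \boxtimes_A X$ against $\bar{\zeta}_{l''} \boxtimes -$ through $\bar{R}_Z \boxtimes \id_X$ and simplifying with $\sum_l p_{l''l} t_{lm} = t_{l''m}$, where $p_{l''l} = \braket{\zeta_{l''} | \zeta_l}$, yields
\[
t_{l''m} \xi_i = \sum_l (p_{l''l} \xi_i) \, t_{lm} \quad \text{in } X.
\]
Repeating the same computation on the $Y$-side, but with the half-braiding condition $\beta_{Y, Z}(g(\xi_i) \boxtimes \zeta_l) = \zeta_l \boxtimes g(\xi_i)$ replacing the direct braiding formula, produces the parallel identity $t_{l''m} g(\xi_i) = \sum_l (p_{l''l} g(\xi_i)) \, t_{lm}$ in $Y$. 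Applying $\tilde{g}$ to the $X$-identity, and using right $A$-linearity together with $A_-$-linearity of $g$ to move $p_{l''l}$ across, delivers exactly $t_{l''m} g(\xi_i)$, giving the required commutation. A minor technical check in the $Y$-side swap step is that the $A_-$-coefficients appearing in a far-left basis expansion of $g(\xi_i)$ commute with $\zeta'_m \in Z_+$, which is automatic. Having established left $A$-linearity of $\tilde{g}$, the identity $\tilde{F}(\tilde{g}) = g$ holds by construction, completing the proof.
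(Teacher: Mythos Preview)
Your argument is correct and arrives at the same conclusion as the paper, but by a genuinely different route. The paper invokes rigidity of $\DHR(A_\bullet)$ for faithfulness and then uses Frobenius reciprocity to reduce fullness to the scalar case $\Hom_{\DHR(A_\bullet)}(A,X)\simeq\Hom_{\cZ(\cC)}(A_-,X_-)$: a morphism on the right is a single $A_-$-central vector $\xi\in X_-$, and one checks directly that the compatibility with half-braidings forces $\xi$ to commute with every charge transporter, hence (by Condition~\ref{cond:reduced-net}) to be $A$-central. Your approach instead stays at the level of a general $g\colon X_-\to Y_-$, builds the candidate lift $\tilde g$ by hand, and proves left $A$-linearity by deriving the two parallel identities $t_{l''m}\xi_i=\sum_l(p_{l''l}\xi_i)t_{lm}$ and $t_{l''m}g(\xi_i)=\sum_l(p_{l''l}g(\xi_i))t_{lm}$ from a two-way computation of $\beta_{X,Z}$ and $\beta_{Y,Z}$; the second of these is exactly where the $\cZ(\cC)$-morphism hypothesis on $g$ enters, via $c^Y_{Z_-}(g(\xi_i)\otimes\zeta_l)=\zeta_l\otimes g(\xi_i)$.

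The trade-off: the paper's reduction to central vectors is cleaner and makes the role of charge transporters more transparent (one literally checks $[t_{ij},\xi]=0$), whereas your direct lift avoids appealing to rigidity and Frobenius reciprocity at the cost of a heavier braiding computation. One small point worth making explicit in your write-up: once you have $\tilde g(t_{l''m}\xi_i)=t_{l''m}\tilde g(\xi_i)$ on basis vectors, the extension to all $x\in X$ is immediate because both $x\mapsto\tilde g(t\cdot x)$ and $x\mapsto t\cdot\tilde g(x)$ are right $A$-linear; you say ``giving the required commutation'' but a reader might wonder whether agreement on a basis suffices.
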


\begin{proof}
We know that $\DHR(A_\bullet)$ is a rigid C$^*$-tensor category by Corollary~\ref{cor:localized-left-basis-for-DHR}.
In particular, $\tilde{F}$ is faithful.
To show fullness, we can use the Frobenius reciprocity to reduce the claim to
\begin{equation*}
\Hom_{\DHR(A_\bullet)}(A, X) \simeq \Hom_{\cZ(\cC)}(A_-, X_-).
\end{equation*}

Let us consider a morphism in the right hand side.
It is given by an $A_-$-central vector $\xi$ in $X_-$ such that $\beta_{X, Y}(\xi \boxtimes_A \eta) = \eta \boxtimes_A \xi$ holds for all $Y \in \DHR(A_\bullet)$ and $\eta \in Y_-$.

By construction $\xi$ is also $A_+$-central, hence it is a $B_0$-central vector.
We then have an interval $[-L, L]$ around $0$ such that $\xi$ is localized on $[-L, L]$.

Let us fix $Y$, and a right projective basis $(\eta^1_i)_i$ localized in an interval to the left of $-L$, and another basis $(\eta^2_j)_j$ localized to the right of $L$.
Let $t^Y_{i j}$ be the corresponding charge transporters.

On one hand, by $\eta^1_i \in Y_-$, we have $\beta_{X, Y}(\xi \boxtimes_A \eta^1_i) = \eta^1_i \boxtimes_A \xi$ as remarked above.
On the other, by definition of the braiding, we also have $\beta_{X, Y}(\xi \boxtimes_A \eta^2_j) = \eta^2_j \boxtimes_A \xi$.
We thus have an equality of the form
\begin{equation*}
\sum_i \eta^1_i \otimes t^Y_{i j} \xi = \sum_i \eta^1_i \otimes \xi t^Y_{i j}
\end{equation*}
This, together with commutation of $\braket{ \eta^1_{i'} | \eta^1_i }$ and $\xi$, implies that we have
\[
\sum_i \braket{ [t^Y_{i j}, \xi] | [t^Y_{i j}, \xi] } = 0.
\]
This implies $[t^Y_{i j}, \xi] = 0$ for all $i$ and $j$.

Now, by Condition~\ref{cond:reduced-net}, $\xi$ commutes with a generating set of $A$, hence it is $A$-central.
Thus, $\xi$ represents a morphism in $\Hom_{\DHR(A_\bullet)}(A, X)$.
\end{proof}

\begin{remark}
If we further assume Condition~\ref{cond:B1-is-DHR} and that $\DHR(A_\bullet)$ has only finitely many irreducible classes, there is an alternative argument as follows.
In this case we know that $\DHR(A_\bullet)$ is a modular tensor functor from Theorem~\ref{thm:triviality}, and $\tilde{F}$ is a braided tensor functor.
Then the claim follows from~\cite{Davydov2013a}*{Corollary 3.26}.
\end{remark}

\subsection{Nondegeneracy of the DHR category}\label{sec:mod-dri-cent}

In this section, our goal will be to show that under some assumption on the inclusion $B_0 \subset A$, the category of DHR bimodules is non-degenerately braided.

We assume that the net $A_{\bullet}$ is rational (Condition~\ref{cond:rationality}), i.e., there is a conditional expectation of finite index $E\colon A \to B_0$.
This allows us to consider the basic extension
\[
A \subset B_1 = \langle A, e \rangle,
\]
and the dual expectation $E_1 \colon B_1 \to A$.
In particular, $B_1$ admits an $A$-valued inner product induced by $E_1$.

We introduce the following condition, which will be used to prove non-degeneracy.
We will later see that $B_1$ agrees with the algebra $Z^{\reg}$ of Section~\ref{sec:dist-alg-objs} under this condition.

\begin{condition}[Locally aligned]\label{cond:B1-is-DHR}
We say that a rational net $A_{\bullet}$ is \emph{locally aligned} if the algebra $B_1$ is a DHR bimodule over $A_\bullet$.
\end{condition}

Observe that we are stating Condition~\ref{cond:B1-is-DHR} as stronger than rationality. That is, whenever we say an abstract spin chain $A_{\bullet}$ is locally aligned, it will be implicitly assumed that it is rational.

Our goal is to prove the following, motivated by Proposition~\ref{prop:comm-with-reg-hlf-br-impl-triv}.

\begin{proposition}\label{prop:trivial-bimod-characterization}
Let $A_\bullet$ be a net satisfying Condition~\ref{cond:B1-is-DHR}.
Suppose that $X$ is a DHR bimodule such that $\beta_{B_1, X} \beta_{X, B_1} = \id$.
Then, $X$ contains a nonzero $B_0$-central vector, hence contains $B_0$ as a $B_0$-bimodule.
\end{proposition}

For the moment let us assume that $X$ satisfies the above assumption.
Choose $F_1 < F_3 < F_2$ such that there is a basis $(\eta_k)_{k=1}^m$ of $B_1$ localized in $F_3$, such that $e$ appears as one of the $\eta_k$, say $\eta_0$.

As before, choose bases $(\xi^1_i)_{i=1}^n$ and $(\xi^2_j)_{j=1}^n$ of $X$ localized in $F_1$ and $F_2$.
We may and do assume that the elements $\braket{ \xi^1_i | \xi^1_j } = p^{(1)}_{i j}$ and $\braket{ \xi^2_i | \xi^2_j } = p^{(2)}_{i j}$ belong to $B_0$, hence commute with $e$.

\begin{lemma}\label{lem:t-ij-commute-with-e}
Let $(t_{i j})_{i, j}$ be the associated charge transporters.
We then have $t_{i j} e = e t_{i j}$.
\end{lemma}

\begin{proof}
Consider the matrices
\begin{align*}
C &= [ t_{i' j} e - e t_{i' j}]_{i', j} \in M_{n}(B_1),&
D &= [ t_{i j}^*]_{j, i} \in M_n(A)
\end{align*}
and the row vector $V = [\xi^1_{i'}]_{i'}$.

We first claim $V C D = 0$, which means
\[
\sum_{i', j} \xi^1_{i'} \otimes [t_{i', j}, e] t_{i j}^* = 0
\]
for all $i$.
By Proposition~\ref{prop:monodromy-formula}, this can be written as
\[
\beta_{B_1, X} \beta_{X, B_1}(\xi^1_i \otimes e) - \sum_{i', j} \xi^1_{i'} \otimes e t_{i' j} t_{i j}^*.
\]
The first terms is equal to $\xi^1_i \otimes e$ by our assumption.
As for the second term, after summation over $j$, on the right of tensor symbol Proposition~\ref{prop:interaction-T-and-P1} gives $e p^{(1)}_{i' i}$.
Since $p = \braket{ \xi^1_{i'} | \xi^1_i }$ commutes with $e$, we have
\[
\xi^1_{i'} p_{i' i} \otimes e = \xi^1_i \otimes e,
\]
which indeed has the claimed cancellation.

Next, we claim that the matrix $D^* = [t_{i j}]_{i, j}$ satisfies $C D D^* = C$, which will then imply $V C = 0$.
Using Proposition~\ref{prop:t-star-t-and-right-support-of-t} and the commutation of $e$ and $p^{(2)}_{j k}$, we compute the components of $C D D^*$ as
\[
\sum_{i, j} [t_{i' j}, e] t_{i j}^* t_{i k} = \sum_j [t_{i' j}, e] p^{(2)}_{j k} = [t_{i' k}, e],
\]
which are indeed the components of $C$.

Now we are ready to get the claim.
Let us fix $i$ and $j$, and take any vector $\eta$ in the bimodule $B_1$.
The vanishing of $V C$ means
\[
\sum_{i'} \xi^1_{i'} \otimes [t_{i' j}, e] = 0.
\]
If we write out the inner product of the left hand side with $\xi_i \otimes \eta$, we get
\[
\sum_{i'} \braket{ \xi^1_{i} \otimes \eta | \xi^1_{i'} \otimes [t_{i' j}, e] } = \sum_{i'} \braket{ \eta | \braket{ \xi^1_{i} | \xi^1_{i'}} (t_{i' j} e - e t_{i' j}) } = \sum_{i'}\braket{ \eta | p^{(1)}_{i i'} (t_{i' j} e - e t_{i' j}) }.
\]
Again using the commutation of $P_1$ with $e$ and Proposition~\ref{prop:interaction-T-and-P1}, we obtain $\braket{ \eta | [t_{i j}, e] } = 0$.
Since this must be $0$ for arbitrary $\eta$, we get the vanishing of $t_{i j} e - e t_{i j}$.
\end{proof}

\begin{remark}
Morally speaking, if $X$ has trivial monodromy with $B_1$, the charge transporters of $X$ cannot interact with the observables sitting at the ``gap'' $0 \in \bZ$ separating the regions $F_1$ and $F_2$.
This should be a property characterizing the vacuum state.
\end{remark}

\begin{proof}[Proof of Proposition~\ref{prop:trivial-bimod-characterization}]
As before, let us choose bases $(\xi^1_i)_i$ and $(\xi^2_j)_j$ be projective bases localized in a negative region $F_1$ and a positive region $F_2$.
Let us take a $B_0$-bimodule decomposition $\bigoplus_{i \in I} X^-_i \otimes X^+_i \simeq X$ from Corollary~\ref{cor:decomp-A-as-B0-bimod}.

We have $e t_{ij} e = E(t_{ij}) e$ from the characterzing condition of $e$, while Lemma~\ref{lem:t-ij-commute-with-e} implies $e t_{ij} e = t_{ij} e^2 = t_{ij}e$.
Since $a \mapsto a e$ is an injective linear map from $A$ to $B_1$, we obtain $t_{i j} \in B_0$.

Now, consider the span $Y = \bigvee_i \xi^1_i B_0$.
On the one hand, it is a $B_0$-submodule of $X$, since $A_- \xi^1_i$ is contained in the span of $\xi^1_{i'} A_-$ for $1 \le i' \le n$.
On the other, it contains the vectors $\xi^2_i$ by $t_{i j} \in B_0$, hence we have the equality $Y = \bigvee_j \xi^2_j B_0$.

Let us take a subset $J \subset I$ satisfying $Y = \bigoplus_{i \in J} X^-_i \otimes X^+_i$.
On one hand, as the vectors $\xi^1_i$ are $A_+$-central, we have $X^+_i \simeq A_+$ for $i \in J$.
On the other, as the vectors $\xi^2_j$ are $A_-$-central, we also have $X^-_i \simeq A_-$ for $i \in J$.

This implies $Y \simeq B_0^k$ as $B_0$-bimodules, and we find $B_0$-central vectors.
\end{proof}

\begin{theorem}\label{thm:triviality}
Suppose that $A_\bullet$ satisfies charge-transporter generation (Condition~\ref{cond:reduced-net})  and that it is locally aligned (Condition~\ref{cond:B1-is-DHR}). Let $X$ be in the Müger center of $\DHR(A_\bullet)$.
Then $X$ is isomorphic to the trivial bimodule $A^k$.
\end{theorem}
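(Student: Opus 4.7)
The proof is built from three already-established ingredients: Proposition~\ref{prop:B0-central-vector-in-bimod-with-triv-monodromy}, which under locally alignedness (Condition~\ref{cond:B1-is-DHR}) yields $B_0$-central vectors in any object whose monodromy with $B_1$ is trivial; Proposition~\ref{prop:F_fully_faithful}, which under charge-transporter generation (Condition~\ref{cond:reduced-net}) realizes $\DHR(A_\bullet)$ as a full subcategory of $\cZ(\cC_-)$ via $\tilde F$; and semisimplicity of $\DHR(A_\bullet)$.

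\textbf{Step 1.}  Applying Proposition~\ref{prop:B0-central-vector-in-bimod-with-triv-monodromy} to $X$ produces a nonzero $\xi\in X$ centralizing $B_0$.  By Proposition~\ref{prop:F-min-is-A-plus-central}, $\xi\in X_-$, and $A_-$-centrality yields a morphism $\tilde\xi\colon A_-\to X_-$ in $\cC_-$.

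\textbf{Step 2.}  The crux is to verify that $\tilde\xi$ is a morphism in the Drinfeld center $\cZ(\cC_-)$.  Since $A_-$ carries the trivial half-braiding, this compatibility amounts to
\[
\beta_{X,Y}(\xi\boxtimes_A\eta) = \eta\boxtimes_A\xi \qquad (Y\in\DHR(A_\bullet),\ \eta\in Y_-).
\]
Equivalently, via the trivial monodromy $\beta_{X,Y}\beta_{Y,X}=\id$ enjoyed by $X$, it suffices to verify $\beta_{Y,X}(\eta\boxtimes_A\xi) = \xi\boxtimes_A\eta$.  Write $\eta=\sum_i\eta_ia_i$ in a projective basis of $Y$ localized in an interval $F_\eta$ far to the left, and expand $\xi=\sum_k\hat\xi_k\braket{\hat\xi_k|\xi}$ in a projective basis $(\hat\xi_k)_k$ of $X$ localized in an interval $F_X$ sitting to the right of $F_\eta$.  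After using $A_-$-centrality $a_i\xi=\xi a_i$ and the standard braiding identity $\beta_{Y,X}(\eta_i\boxtimes_A\hat\xi_k)=\hat\xi_k\boxtimes_A\eta_i$, the desired equality collapses to the commutation of $\braket{\hat\xi_k|\xi}$ with each basis vector $\eta_i$ in $Y$.  Now $\braket{\hat\xi_k|\xi}$ commutes with everything that centralizes both $\hat\xi_k$ and $\xi$; the former gives $A_{F_X^c}$ and the $B_0$-centrality of $\xi$ gives $A_-\vee A_+$, so weak algebraic Haag duality confines $\braket{\hat\xi_k|\xi}$ to an algebra localized in a small neighborhood of $F_X$.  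Choosing $F_\eta$ sufficiently far to the left of this neighborhood makes $\braket{\hat\xi_k|\xi}$ lie in the commutant of $A_{F_\eta}$, hence commute with $\eta_i$ in $Y$.

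\textbf{Step 3.}  By fullness of $\tilde F$, $\tilde\xi$ lifts to a nonzero morphism $A\to X$ in $\DHR(A_\bullet)$, so $\Hom_{\DHR(A_\bullet)}(A,X)\neq 0$.  Semisimplicity then gives $X\simeq A^{\oplus k}\oplus X'$ with $k=\dim\Hom_{\DHR(A_\bullet)}(A,X)$ and $\Hom_{\DHR(A_\bullet)}(A,X')=0$.  The summand $X'$ inherits membership in the Müger center and all standing assumptions; if $X'\neq 0$, Steps 1--2 applied to $X'$ would provide a nonzero morphism $A\to X'$, a contradiction.  Hence $X'=0$ and $X\simeq A^{\oplus k}$.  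The main obstacle is Step 2: the braiding identity must be extracted from the abstract triviality of monodromy using careful control of the localization of the $A$-valued inner products $\braket{\hat\xi_k|\xi}$.
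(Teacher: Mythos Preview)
Your Step~2 computation is essentially the same localization argument as the paper's: confine the coefficients $c_k=\braket{\hat\xi_k|\xi}$ to a bounded region via weak Haag duality, then place a basis of $Y$ far enough away that it commutes with the $c_k$. The paper carries this out directly: with $(\xi_i)_i$ localized in $[-a,-1]$ and $a_i=\braket{\xi_i|\xi}\in A_{[-a-R,R]}$, it chooses bases $(\eta^1_p)$ of $Y$ to the left of $-a-R$ and $(\eta^2_q)$ to the right of $R$, observes $\beta_{Y,X}(\eta^1_p\boxtimes\xi)=\xi\boxtimes\eta^1_p$ and $\beta_{X,Y}^{-1}(\eta^2_q\boxtimes\xi)=\xi\boxtimes\eta^2_q$ from the braiding definition and the localization of $a_i$, and then the M\"uger-center identity $\beta_{Y,X}=\beta_{X,Y}^{-1}$ combined with $\eta^2_q=\sum_p\eta^1_p t_{pq}$ yields $\sum_p\eta^1_p\boxtimes[t_{pq},\xi]=0$, hence $\xi$ is $A$-central by Condition~\ref{cond:reduced-net}. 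Your route performs half of this and outsources the remainder to Proposition~\ref{prop:F_fully_faithful}.

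There is a genuine gap in this packaging, however: you invoke Proposition~\ref{prop:F-min-is-A-plus-central} to conclude $\xi\in X_-$, but that proposition lives in Section~\ref{sec:equiv-half-line-cats-strict-haag-duality} and requires \emph{algebraic} Haag duality, which Theorem~\ref{thm:triviality} does not assume. Under only weak Haag duality, $A_+$-centrality of $\xi$ gives $\braket{\hat\xi_k|\xi}\in A_{(-\infty,R-1]}$, not $A_-$, so $\xi$ need not lie in $X_-$ and you cannot form $\tilde\xi\colon A_-\to X_-$ in $\cC_-$ as required to invoke Proposition~\ref{prop:F_fully_faithful}. The remedy is to bypass the $\cZ(\cC_-)$ detour altogether: your Step~2 already gives $\beta_{Y,X}(\eta^1_p\boxtimes\xi)=\xi\boxtimes\eta^1_p$; add the symmetric observation $\beta_{X,Y}(\xi\boxtimes\eta^2_q)=\eta^2_q\boxtimes\xi$ for a right-localized basis (same localization argument, no M\"uger needed), then compare the two via the charge transporters and the M\"uger identity to obtain $[t_{pq},\xi]=0$ directly.
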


\begin{proof}
By induction, it is enough to find a copy of $A$ as a direct summand of $X$ as a $A$-bimodule.
Proposition~\ref{prop:trivial-bimod-characterization} gives a nonzero $B_0$-central vector $\xi \in X$.
It is enough to show that $\xi$ is in fact $A$-central.

Let us take a basis $(\xi_i)_i$ of $X$ localized in an interval $[-a, -1]$.
Then we claim that $\xi$ is of the form $\sum_i \xi_i a_i$, with $a_i \in A_{[-a - R, R]}$ where $R$ is the controlling constant from weak algebraic Haag duality (Condition~\ref{cond:Haag}).
Indeed, the $B_0$-centrality of $\xi$ implies that $a_i = \braket{ \xi_i | \xi }$ should commute with $A_{(-\infty, -a]}$ and $A_{[0, \infty)}$.
Then Condition~\ref{cond:Haag} implies the claim.

Let us analyze the condition $\beta_{Y, X} = \beta_{X, Y}^{-1}$ when $Y$ is another DHR bimodule.
We choose two localized bases $(\eta^1_p)_p$ and $(\eta^2_q)_q$ of $Y$, respectively to the left of $-a-R$ and to the right of $R$.
Let $t_{p q} = \braket{ \eta^1_p | \eta^2_q }$ be the corresponding charge transporter, so that we have
\[
\eta^2_q = \sum_p \eta^1_p t_{p q}.
\]
By definition of the braiding we have
\begin{align*}
\beta_{Y, X}(\eta^1_p \otimes \xi_i) &= \xi_i \otimes \eta^1_p,&
\beta_{X, Y}^{-1}(\eta^2_q \otimes \xi_i) &= \xi_i \otimes \eta^2_q,
\end{align*}
and by the commutation of the coefficients $a_i$ with $\eta^1_p$ and $\eta^2_q$, we have the same formulas when we replace $\xi_i$ with $\xi$.

Then the equality $\beta_{Y, X} = \beta_{X, Y}^{-1}$ implies that we have
\begin{equation*}
\sum_p \eta^1_p \otimes t_{p q} \xi = \sum_p \eta^1_p \otimes \xi t_{p q}
\end{equation*}
for each $q$.
We then obtain the claim by an argument analogous to the last part of the proof for Proposition~\ref{prop:F_fully_faithful}.
\end{proof}

\subsection{Essential surjectivity of comparison functor}

Let us keep Conditions~\ref{cond:reduced-net} and~\ref{cond:B1-is-DHR}.
Having established that $\DHR(A_\bullet)$ is a nondegenerate full braided subcategory of $\cZ(\cC)$,~\cite{MR1990929}*{Theorem~4.2} gives an equivalence of braided categories
\begin{equation}\label{eq:decomp-of-center-by-dhr}
\cZ(\cC) \simeq \DHR(A_\bullet) \boxtimes \cZ',
\end{equation}
where $\cZ'$ is a modular tensor category (more specifically $\cZ'$ is the centralizer of the essential image of $\DHR(A_\bullet)$ under $\tilde{F}$ in $\cZ(\cC)$).

\begin{proposition}\label{prop:Z-prime-full-emb-to-C}
The tensor functor $\cZ' \to \cC$ obtained by the restriction of the forgetful functor $\cZ(\cC) \to \cC$ is fully faithful.
\end{proposition}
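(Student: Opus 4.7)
The plan is to first note that faithfulness is automatic: a morphism in $\cZ(\cC)$ is, by definition, a $\cC$-morphism subject to an additional half-braiding compatibility, so the forgetful functor cannot identify distinct morphisms. The nontrivial content is fullness, which amounts to showing that for $(Z,c), (Z',c') \in \cZ'$ and any $\cC$-morphism $f \colon Z \to Z'$, the identity $c'_Y \circ (f \otimes \id_Y) = (\id_Y \otimes f) \circ c_Y$ holds for every $Y \in \cC$.

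Next I would reduce this to test objects of the form $Y = X_-$ with $X \in \DHR(A_\bullet)$. By construction, $\cC = \cC_-$ is the idempotent completion of the full tensor subcategory of $\Corr(A_-)$ spanned by such $X_-$, so every $Y \in \cC$ is a direct summand of some $X_-$; and the half-braiding compatibility for $f$ at a direct summand is immediate from the same compatibility at the ambient object via naturality of $c$ and $c'$ in the test variable. Hence it suffices to check the identity for $Y = X_-$.

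For such $Y$, the core ingredient is the description of $\cZ'$ as the Müger centralizer of $\tilde{F}(\DHR(A_\bullet))$ in $\cZ(\cC)$. Writing $c^X$ for the half-braiding making $\tilde{F}(X) = (X_-, c^X)$ and using the formula $\beta^{\cZ(\cC)}_{(W,d),(W',d')} = d_{W'}$ for the braiding of the Drinfeld center, the vanishing monodromy of $(Z,c)$ with $(X_-, c^X)$ unpacks to $c^X_Z \circ c_{X_-} = \id_{Z \otimes X_-}$, equivalently $c_{X_-} = (c^X_Z)^{-1}$, and similarly $c'_{X_-} = (c^X_{Z'})^{-1}$. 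Applying naturality of $c^X$ in its $\cC$-argument to the morphism $f$ yields $c^X_{Z'} \circ (\id_{X_-} \otimes f) = (f \otimes \id_{X_-}) \circ c^X_Z$; inverting both sides gives precisely $c'_{X_-} \circ (f \otimes \id_{X_-}) = (\id_{X_-} \otimes f) \circ c_{X_-}$, which is what needed to be shown. There is no substantial obstacle: the only conceptual input is the observation that membership in the Müger centralizer forces the half-braiding with $\DHR$ generators to be the inverse of the $\DHR$-side half-braiding, which reduces the whole question to a tautological naturality diagram, with no further operator-algebraic or analytic content needed.
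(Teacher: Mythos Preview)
Your argument is correct. The faithfulness observation and the reduction to test objects $Y = X_-$ are exactly right, and the key step---that trivial monodromy with $\tilde F(X)$ forces $c_{X_-} = (c^X_Z)^{-1}$, after which the required compatibility diagram is literally the naturality square of $c^X$ read backwards---goes through with no gaps.

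Your route differs from the paper's in presentation rather than substance. The paper first invokes Frobenius reciprocity to reduce fullness to the special case $\Hom_{\cZ'}(1_\cC,(Y,c)) = \Hom_\cC(1_\cC,Y)$, then represents a morphism $1_\cC \to Y$ by an $A_-$-central vector $\eta$ and verifies elementwise that $c_{X_-}(\eta \otimes \xi) = \xi \otimes \eta$ using the centralizer condition and a localized basis. You bypass the Frobenius step entirely and argue for arbitrary $f\colon Z \to Z'$ directly at the level of natural isomorphisms. Both arguments rest on the same observation---membership in the centralizer pins down $c_{X_-}$ as the inverse of the DHR half-braiding $c^X_Z$---but yours extracts the conclusion from naturality of $c^X$ in one line, while the paper's vector computation makes that naturality explicit in coordinates. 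Your version is slightly more efficient and manifestly categorical; the paper's version stays closer to the bimodule model and may make the operator-algebraic content more visible to readers tracking the analogy with conformal nets.
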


\begin{proof}
Since $\cZ'$ is a rigid tensor category, we only need to prove fullness.
By Frobenius reciprocity, we can reduce this to $\Hom_{\cZ'}(1, Y) = \Hom_{\cC}(1, Y)$ for objects $(Y, c) \in \cZ'$.

An element in $T \in \Hom_{\cC}(1, Y)$ is represented by a $A_-$-central vector $\eta \in Y$.
We need to show that, for any $X \in \DHR(A_\bullet)$ and any $\xi \in X_-$, the half-braiding $c_X \colon Y \boxtimes_{A_-} X_- \to X_- \boxtimes_{A_-} Y$ sends $\eta \otimes \xi$ to $\xi \otimes \eta$.

Fix a basis $(\xi_i)_i$ in $X$ localized to the left of the support of $\eta$, and write $\xi = \sum_i \xi_i a_i$.
We then have
\[
\xi \otimes \eta = \sum_i \xi_i \otimes \eta a_i
\]
by the $A_-$-centrality of $\eta$.
In view of the $A_-$-bimodularity of $c_X$, it is enough to check $c_X(\eta \otimes \xi_i) = \xi_i \otimes \eta$.
This follows from the commutation of $(Y, c)$ and $(X, \beta_{X, \bullet})$.
\end{proof}

Let us assume algebraic Haag duality now.
We are going to observe that $B_1$ becomes isomorphic in $\cZ(\cC)$ to  the algebra object $Z^{\reg}$ of Section~\ref{sec:dist-alg-objs}, from which we can conclude $\DHR(A_\bullet) \simeq \cZ(\cC)$.
Recall that, under this assumption, we have an equivalence $\Omega\colon \cC = \cC_- \to \cC_+^{\opo}$ characterized by the isomorphism of $B_0$-$A$-bimodules $Y \boxtimes_{A_-} A \simeq \Omega(Y) \boxtimes_{A_+} A$.

The first step is to see that the image of $B_1$ in $\cZ(\cC)$ is a direct summand of $Z^\reg$.
Recall that $Z^\reg$ is the image of $1_\cC$ under the functor $L \colon \cC \to \cZ(\cC)$ which is (both left and right) adjoint to the forgetful functor $U \colon \cZ(\cC) \to \cC$.

The $B_0$-$A$-bimodule corresponding to $1_\cC$ is $A$.
Moreover, up to the full embedding of $\cC$ into the category of $B_0$-$A$-modules, the composition of functors
\[
\begin{tikzcd}[column sep=small]
\DHR(A_\bullet) \arrow[r] & \cZ(\cC) \arrow[r, "U"] & \cC
\end{tikzcd}
\]
corresponds to the restriction of scalars from $A$-bimodules to $B_0$-$A$-modules.
Thus, the adjoint functor $\cC \to \DHR(A_\bullet)$ sends $1_\cC$ to $B_1 = A \boxtimes_{B_0} A$.

This shows that $B_1$ is the image of $Z^\reg$ for the functor $\cZ(\cC) \to \DHR(A_\bullet)$ adjoint to the inclusion of $\DHR(A_\bullet)$ to $\cZ(\cC)$, that is, $B_1$ is a subobject of $Z^\reg$ in $\cZ(\cC)$.
We want to show that they agree.

\begin{proposition}
Let $Y \in \cC$ and $Y' \in \cC_+$ be irreducible bimodules.
The $B_0$-bimodule $Y \otimes Y'$ appears as a direct summand of $A$ if and only if $\Omega(Y)$ is the dual object of $Y'$.
\end{proposition}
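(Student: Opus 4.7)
The plan is to compute the multiplicity space $\Hom_{B_0\mhyph B_0}(Y \otimes Y', A)$ directly via a chain of adjunctions, identify it with a morphism space in $\cC_-$, and then read off the iff statement from irreducibility. As a bonus this will pin down the multiplicity of each summand as exactly one.

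First I would exploit the factorization $B_0 = A_- \otimes A_+$ (commuting subalgebras) to apply a Hom-tensor adjunction, obtaining
\[
\Hom_{B_0\mhyph B_0}(Y \otimes Y', A) \simeq \Hom_{A_-\mhyph A_-}\bigl(Y, \Hom_{A_+\mhyph A_+}(Y', A)\bigr),
\]
where the inner Hom inherits an $A_-$-bimodule structure from the $A_-$-action on $A$ (which commutes with $A_+$). Then Frobenius reciprocity in the rigid C$^*$-tensor category $\cC_+$ (extended to its ind-completion so that $A$ is accommodated) rewrites the inner Hom as $((Y')^* \boxtimes_{A_+} A)^{A_+}$, the subspace of $A_+$-central vectors, and it is routine to check that this isomorphism intertwines the two $A_-$-bimodule structures.

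The next step is to identify $((Y')^* \boxtimes_{A_+} A)^{A_+}$ with $\Omega^{-1}((Y')^*)$ in $\cC_-$. Pick a DHR bimodule $X$ with $(Y')^* \hookrightarrow X_+$ as a direct summand in $\cC_+$; then $(Y')^* \boxtimes_{A_+} A$ embeds as a $B_0$-$A$-subbimodule of $X \simeq X_+ \boxtimes_{A_+} A$, compatibly with the decomposition of $X_+$ into $\cC_+$-irreducibles. Taking $A_+$-central vectors, and invoking $X^{A_+} = X_-$ from Proposition~\ref{prop:F-min-is-A-plus-central} together with the fact that $\Omega^{-1}$ respects direct summands, the matching $\cC_-$-summand of $X_-$ is precisely $\Omega^{-1}((Y')^*)$.

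Assembling the chain yields
\[
\Hom_{B_0\mhyph B_0}(Y \otimes Y', A) \simeq \Hom_{\cC_-}\bigl(Y, \Omega^{-1}((Y')^*)\bigr).
\]
Since $\cC_-$ is a C$^*$-category with simple unit (Condition~\ref{cond:B0-is-simple}) and both $Y$ and $\Omega^{-1}((Y')^*)$ are irreducible, this space is one-dimensional when $Y \simeq \Omega^{-1}((Y')^*)$ -- equivalently, when $\Omega(Y) \simeq (Y')^*$ -- and vanishes otherwise. This proves both directions of the iff and shows each matched pair appears with multiplicity one. The main technical obstacle is the $A_-$-bilinearity bookkeeping through the Frobenius-reciprocity step, and extending the identification with $\Omega^{-1}$ from the DHR setting to arbitrary objects of $\cC_+$; both are handled by reducing to an ambient DHR bimodule and using compatibility of $\Omega$ with summand decompositions.
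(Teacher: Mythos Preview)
Your argument is correct and runs parallel to the paper's, but from the opposite side. The paper extends scalars on the right: it passes from $\Hom_{B_0\mhyph B_0}(Y \otimes Y', A)$ to $\Hom_{B_0\mhyph A}((Y \otimes Y') \boxtimes_{B_0} A, A)$, then invokes the defining isomorphism $Y \boxtimes_{A_-} A \simeq \Omega(Y) \boxtimes_{A_+} A$ of $B_0$-$A$-bimodules to rewrite the source as $Y' \boxtimes_{A_+} \Omega(Y) \boxtimes_{A_+} A$, and finally lands in $\Hom_{\cC_+}(Y' \boxtimes_{A_+} \Omega(Y), A_+)$. You instead take an inner Hom on the $A_+$-side, use Frobenius reciprocity to get $((Y')^* \boxtimes_{A_+} A)^{A_+}$, and identify this with $\Omega^{-1}((Y')^*)$ via Proposition~\ref{prop:F-min-is-A-plus-central}, ending in $\Hom_{\cC_-}(Y, \Omega^{-1}((Y')^*))$. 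The two routes are dual to one another under the symmetry $\cC_- \leftrightarrow \cC_+$. The paper's version is marginally cleaner because the characterizing isomorphism for $\Omega$ is already stated in exactly the form needed, whereas you must derive the inverse identification by embedding into an ambient DHR bimodule and tracking summands; on the other hand, your version makes the multiplicity-one statement explicit, which the paper leaves implicit until the decomposition displayed after Corollary~\ref{cor:AisSEApartII}.
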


\begin{proof}
On one hand, having a nonzero $B_0$-bimodule homomorphism $Y \otimes Y' \to A$ is equivalent to having a nonzero $B_0$-$A$-module homomorphism $Y \otimes Y' \boxtimes_{B_0} A \to A$.
On the other, by the above characterization of $\Omega$, we have
\[
Y \otimes Y' \boxtimes_{B_0} A \simeq Y' \boxtimes_{A_+} \Omega(Y) \boxtimes_{A_+} A.
\]
Combining these, an embedding of $Y \otimes Y'$ into $A$ corresponds to a nonzero $A_+$-bimodule homomorphism from $Y' \boxtimes_{A_+} \Omega(Y)$ to $A_+$, which is a characterization of $\Omega(Y)$ being dual to $Y'$.
\end{proof}

\begin{corollary}
Let $Y$ be an irreducible object in $\cC$.
Then $Y \otimes \Omega(Y^*)$ appears as a direct summand of $A$.
\end{corollary}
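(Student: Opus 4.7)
The plan is to apply Proposition~\ref{prop:AisSEApartI} with the choice $Y' := \Omega(Y^*)$. Two items need to be verified for the proposition to apply: (i) that $Y'$ is an irreducible object of $\cC_+$, and (ii) that $\Omega(Y)$ is dual to $Y'$ in $\cC_+$. Item (i) is immediate since $\cC_-$ is rigid and semisimple, so $Y^*$ is irreducible in $\cC_-$, and the tensor equivalence $\Omega\colon \cC_- \to \cC_+^{\opo}$ provided by Proposition~\ref{prop:mirrorsymmetry} preserves irreducibility (an equivalence of categories reflects and preserves simple objects).

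The substantive point is (ii), which is where the reversal in $\Omega$ plays a role. Since $\Omega$ is a monoidal equivalence $\cC_-\to \cC_+^{\opo}$, it sends dual pairs in $\cC_-$ to dual pairs in $\cC_+^{\opo}$. In particular, the standard duality $Y \dashv Y^*$ in $\cC_-$ transports to a duality $\Omega(Y) \dashv \Omega(Y^*)$ in $\cC_+^{\opo}$. Unpacking the opposite monoidal structure $X \otimes^{\opo} Z = Z \otimes X$, a duality $X \dashv Z$ in $\cC_+^{\opo}$ is precisely a duality $Z \dashv X$ in $\cC_+$. Thus $\Omega(Y^*) \dashv \Omega(Y)$ in $\cC_+$, i.e.\ $\Omega(Y)$ is the dual of $\Omega(Y^*)=Y'$ in $\cC_+$, as required. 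Proposition~\ref{prop:AisSEApartI} then yields that $Y \otimes \Omega(Y^*)$ appears as a direct summand of $A$. No genuine obstacle is anticipated; the corollary is a direct translation of the preceding proposition through the duality-preserving equivalence $\Omega$.
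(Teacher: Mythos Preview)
Your argument is correct. You apply Proposition~\ref{prop:AisSEApartI} with $Y' = \Omega(Y^*)$ and verify its hypothesis directly: $\Omega$ is a monoidal equivalence $\cC_- \to \cC_+^{\opo}$ (Proposition~\ref{prop:mirrorsymmetry}), so it carries the duality $Y \dashv Y^*$ to $\Omega(Y) \dashv \Omega(Y^*)$ in $\cC_+^{\opo}$, which unwinds to $\Omega(Y^*) \dashv \Omega(Y)$ in $\cC_+$; since $\cC_+$ is a unitary tensor category, left and right duals coincide, so $\Omega(Y)$ is dual to $\Omega(Y^*)$ and the proposition applies.

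The paper argues differently. Rather than verifying the duality condition abstractly, it invokes the concrete observation from the proof of Corollary~\ref{cor:finite-index-implies-fusion-cat} that for every irreducible $Y \in \cC_-$ there exists \emph{some} $Y' \in \Irr(\cC_+)$ with a nonzero $B_0$-bimodule map $Y \otimes Y' \to A$ (coming from the pairing $\xi \otimes \overline{\eta} \mapsto \braket{\eta|\xi}$). Proposition~\ref{prop:AisSEApartI} then forces this $Y'$ to be the dual of $\Omega(Y)$; uniqueness of duals identifies it with $\Omega(Y^*)$. Your route is more self-contained and purely categorical, needing only that $\Omega$ is a monoidal equivalence; the paper's route is more constructive, exhibiting the embedding via an explicit pairing before identifying the summand. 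Either way the corollary follows.
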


\begin{proof}
By the above proposition, it is enough to check that there is some nonzero homomorphism from $Y \otimes Y'$ to $A$, for some $Y' \in \cC_+$.
This was indeed observed in the proof of Corollary~\ref{cor:finite-index-implies-fusion-cat}.
\end{proof}

We now have a decomposition
\[
A \simeq \bigoplus_{Y \in \Irr(\cC)} Y \otimes \Omega(Y^*),
\]
refining Proposition~\ref{prop:decomp-A-as-B0-bimod}.

\begin{proposition}
The image of $B_1$ in $\cZ(\cC)$ is isomorphic to $Z^\reg$.
\end{proposition}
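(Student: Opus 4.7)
My strategy is to show that the inclusion $\tilde{F}(B_1) \hookrightarrow Z^{\reg}$ in $\cZ(\cC)$ established in the preceding paragraphs is in fact an isomorphism, by checking it at the level of the underlying objects in $\cC$. Since $\cC$ is fusion by Corollary~\ref{cor:finite-index-implies-fusion-cat}, so is $\cZ(\cC)$, hence semisimple, and the forgetful functor $U \colon \cZ(\cC) \to \cC$ is faithful. Writing $Z^{\reg} \simeq \tilde{F}(B_1) \oplus Z''$ by semisimplicity, any identification $U(\tilde{F}(B_1)) \simeq U(Z^{\reg})$ will force $U(Z'') = 0$ and hence $Z'' = 0$, as desired.

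The key computation is therefore to identify $(B_1)_- = U(\tilde{F}(B_1))$ with $U(Z^{\reg}) = \bigoplus_i U_i^* \boxtimes_{A_-} U_i$. The plan is to substitute the refined decomposition $A \simeq \bigoplus_{Y \in \Irr(\cC)} Y \otimes \Omega(Y^*)$ stated just before this proposition into $B_1 = A \boxtimes_{B_0} A$ to get
\[
B_1 \simeq \bigoplus_{Y, Y'} (Y \boxtimes_{A_-} Y') \otimes (\Omega(Y^*) \boxtimes_{A_+} \Omega(Y'^*))
\]
as $B_0$-bimodules. Passing to $(B_1)_-$ amounts, by Proposition~\ref{prop:F-min-is-A-plus-central}, to extracting the $A_+$-central part of the right-hand factor, which is $\Hom_{\cC_+}(1, \Omega(Y^*) \boxtimes_{A_+} \Omega(Y'^*))$.

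To evaluate that $\Hom$ space I would invoke the monoidal equivalence $\Omega \colon \cC_- \to \cC_+^{\opo}$ of Proposition~\ref{prop:mirrorsymmetry}. Because the opposite monoidal structure reverses the tensor order, monoidality of $\Omega$ gives $\Omega(Y^*) \boxtimes_{A_+} \Omega(Y'^*) \simeq \Omega(Y'^* \boxtimes_{A_-} Y^*)$, and since $\Omega$ is an equivalence preserving units and $\Hom$ spaces, the $\Hom$ space transports to $\Hom_\cC(1, Y'^* \boxtimes_{A_-} Y^*) \simeq \Hom_\cC(Y \boxtimes_{A_-} Y', 1)$ by Frobenius reciprocity. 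For irreducible $Y, Y'$ this is one-dimensional exactly when $Y' \simeq Y^*$ and zero otherwise, so summing yields
\[
(B_1)_- \simeq \bigoplus_{Y \in \Irr(\cC)} Y \boxtimes_{A_-} Y^* \simeq U(Z^{\reg}),
\]
and the argument of the first paragraph closes the proof.

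The only delicate point I anticipate is carefully tracking the opposite monoidal structure of $\cC_+^{\opo}$ together with the duality conventions between $\cC_-$ and $\cC_+$ when rewriting the $A_+$-central vectors; once that bookkeeping is set up correctly, the rest is formal.
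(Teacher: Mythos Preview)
Your argument is correct and follows essentially the same route as the paper: reduce to comparing the underlying $\cC$-objects via the subobject relation $\tilde F(B_1)\hookrightarrow Z^\reg$, expand $B_1=A\boxtimes_{B_0}A$ using the decomposition $A\simeq\bigoplus_Y Y\otimes\Omega(Y^*)$, and identify $(B_1)_-$ by extracting the $A_+$-central part, which is nonzero precisely when $Y'\simeq Y^*$. Your explicit use of the monoidality of $\Omega$ and Frobenius reciprocity just spells out what the paper leaves implicit in the line ``this is nontrivial if and only if $Y_2\simeq Y_1^*$''.
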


\begin{proof}
We already observed that $B_1$ becomes a subobject of $Z^\reg$, hence it is enough to show that the underlying $\cC$-objects agree.
By the above decomposition of $A$, we have
\[
B_1 \simeq \bigoplus_{Y_1, Y_2 \in \Irr(\cC)} Y_1 \boxtimes_{A_-} Y_2 \otimes \Omega(Y_1^*) \boxtimes_{A_+} \Omega(Y_2^*).
\]
To get the underlying object $(B_1)_-$ in $\cC$, we take the $A_+$-central vectors, or equivalently, replace $\Omega(Y_1^*) \boxtimes_{A_+} \Omega(Y_2^*)$ by $\Hom_{\cC_+}(A_+, \Omega(Y_1^*) \boxtimes_{A_+} \Omega(Y_2^*))$.
Since the $Y_i$ are irreducible, this is nontrivial (then $1$-dimensional) if and only if $Y_2$ is isomorphic to $Y_1^*$.
We thus have
\[
(B_1)_- \simeq \bigoplus_{Y \in \Irr(\cC)} Y \boxtimes_{A_-} Y^*,
\]
which proves the claim.
\end{proof}

\begin{theorem}
Suppose that the net $A_{\bullet}$ satisfies charge-transporter generation (Condition~\ref{cond:reduced-net}), algebraic Haag duality (Condition~\ref{cond:Haag}), and that it is locally aligned (Condition~\ref{cond:B1-is-DHR}).
Then $\DHR(A_\bullet)$ is equivalent to $\cZ(\cC)$.
\end{theorem}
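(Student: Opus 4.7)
The plan is to combine all the earlier ingredients: the fully faithful braided embedding $\tilde{F}\colon \DHR(A_\bullet) \to \cZ(\cC)$ from Proposition~\ref{prop:F_fully_faithful}, the identification of $B_1$ with $Z^\reg$ in $\cZ(\cC)$ from the preceding proposition, the criterion for triviality via $Z^\reg$ from Proposition~\ref{prop:comm-with-reg-hlf-br-impl-triv}, and the fully faithful forgetful embedding $\cZ' \to \cC$ from Proposition~\ref{prop:Z-prime-full-emb-to-C}. Since under the current hypotheses $\DHR(A_\bullet)$ is a nondegenerate full modular subcategory of $\cZ(\cC)$, the Müger decomposition~\eqref{eq:decomp-of-center-by-dhr} gives $\cZ(\cC)\simeq \DHR(A_\bullet)\boxtimes \cZ'$; it therefore suffices to show that the centralizer $\cZ'$ is trivial, i.e.\ consists only of direct sums of the unit object.

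Let $(Y,c)\in \cZ'$ be arbitrary. Since $B_1$ lies in the image of $\tilde{F}$ and is isomorphic to $Z^\reg$ in $\cZ(\cC)$, the defining centralizer property forces $\beta_{Z^\reg,Y}\beta_{Y,Z^\reg}=\id$. Proposition~\ref{prop:comm-with-reg-hlf-br-impl-triv} then guarantees that the underlying $\cC$-object $U(Y)$ is isomorphic to $1_\cC^{\oplus k}$ for some integer $k$. In particular $\dim \Hom_\cC(1_\cC,U(Y))=k$. By Proposition~\ref{prop:Z-prime-full-emb-to-C} the forgetful functor $\cZ'\to \cC$ is fully faithful, so $\dim\Hom_{\cZ'}(1_{\cZ'},Y)=k$, and since $\cZ'$ is a full subcategory of $\cZ(\cC)$, the same dimension equals $\dim\Hom_{\cZ(\cC)}(1_{\cZ(\cC)},Y)$.

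Now decompose $(Y,c)$ in $\cZ(\cC)$ into simples. Because $U(Y)\simeq 1_\cC^{\oplus k}$, each simple summand must itself have underlying $\cC$-object a copy of $1_\cC$, i.e.\ must be of the form $(1_\cC,c^\phi)$ for some character $\phi$ of the universal grading group $\Gamma_\cC$ (cf.\ Example~\ref{ex:hlf-br-on-triv-by-grading}), giving $Y\simeq \bigoplus_\phi (1_\cC,c^\phi)^{\oplus k_\phi}$ with $\sum_\phi k_\phi=k$. Since the $(1_\cC,c^\phi)$ are pairwise non-isomorphic simples and only the trivial character yields a nonzero morphism from $1_{\cZ(\cC)}$, we have $\dim\Hom_{\cZ(\cC)}(1_{\cZ(\cC)},Y)=k_{\mathrm{triv}}$. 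Combining with $k_{\mathrm{triv}}=k$ forces $k_\phi=0$ for every non-trivial $\phi$, so $(Y,c)\simeq 1_{\cZ(\cC)}^{\oplus k}$. Hence $\cZ'$ contains only direct sums of the unit, and $\cZ(\cC)\simeq\DHR(A_\bullet)$.

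The main obstacle, conceptually, is the gap between Proposition~\ref{prop:comm-with-reg-hlf-br-impl-triv} giving only information about the underlying $\cC$-object and the need to pin down the half-braiding as well; this is resolved precisely by the full faithfulness of the forgetful functor on $\cZ'$ established in Proposition~\ref{prop:Z-prime-full-emb-to-C}, which is why algebraic Haag duality (used to identify $\cC_-$ with $\cC_+^{\opo}$ and hence to set up the identification of $A$ as $\bigoplus_Y Y\otimes \Omega(Y^*)$) is needed in the hypothesis. Everything else in the argument is a bookkeeping application of results already proved in the excerpt.
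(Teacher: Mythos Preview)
Your proof is correct and follows the same strategy as the paper: show $\cZ'$ is trivial by combining $B_1\simeq Z^\reg$, Proposition~\ref{prop:comm-with-reg-hlf-br-impl-triv}, and the full faithfulness of $\cZ'\to\cC$ from Proposition~\ref{prop:Z-prime-full-emb-to-C}. However, your detour through the classification of half-braidings on $1_\cC$ via grading characters is unnecessary: once you know the forgetful functor $U\colon\cZ'\to\cC$ is fully faithful and $U(Y)\simeq 1_\cC^{\oplus k}=U(1_{\cZ'}^{\oplus k})$, full faithfulness \emph{directly} lifts this isomorphism to $Y\simeq 1_{\cZ'}^{\oplus k}$ in $\cZ'$, which is exactly how the paper argues in one line. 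Also, your closing commentary misplaces the role of algebraic Haag duality: Proposition~\ref{prop:Z-prime-full-emb-to-C} itself does not require it; Haag duality enters only in establishing the isomorphism $B_1\simeq Z^\reg$ (via the equivalence $\Omega\colon\cC_-\to\cC_+^{\opo}$ and the resulting decomposition of $A$).
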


\begin{proof}
We need to check that the category $\cZ'$ is spanned by the monoidal unit of $\cZ(\cC)$.
By Proposition~\ref{prop:Z-prime-full-emb-to-C}, it is enough to check this at the level of underlying objects.
Since we know that $B_1$ is isomorphic to $Z^\reg$, the claim follows from Proposition~\ref{prop:comm-with-reg-hlf-br-impl-triv}.
\end{proof}

\subsection{Converse for local alignment}

In this subsection we assume the algebraic Haag duality condition, and consider the unitary tensor equivalence $\Omega\colon \cC_- \to \cC^{\opo}$ from above.

\begin{theorem}
Let $A_\bullet$ be a rational abstract spin chain satisfying the algebraic Haag duality, i.e., assume Conditions~\ref{cond:rationality} and~\ref{cond:Haag}.
Suppose moreover that the canonical functor $\DHR(A_\bullet) \to \cZ(\cC_-)$ is an equivalence.
Then the basic construction $B_1$ is a DHR bimodule.
\end{theorem}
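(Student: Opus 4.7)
The plan is to identify $B_1$ with the DHR bimodule $X_0 \coloneqq \tilde F^{-1}(Z^\reg)$ corresponding under the given equivalence to the regular algebra object $Z^\reg \in \cZ(\cC_-)$. Once $B_1 \simeq X_0$ as an $A$-bimodule, $B_1$ inherits the DHR structure of $X_0$.

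First, I would construct a canonical comparison map. Composing $\tilde F^{-1}$ with the left adjoint $L \colon \cC_- \to \cZ(\cC_-)$ of the forgetful functor gives a left adjoint $\tilde L = \tilde F^{-1} \circ L$ to $F_- \colon X \mapsto X_-$, so $X_0 = \tilde L(1_{\cC_-})$, and the unit of the adjunction at $1_{\cC_-}$ produces a distinguished $A_-$-central vector $\eta_0 \in (X_0)_-$. By Proposition~\ref{prop:F-min-is-A-plus-central}, $\eta_0$ is also $A_+$-central, hence $B_0$-central in $X_0$. The universal property of $B_1 \simeq A \otimes_{B_0} A$ as an $A$-bimodule then yields a canonical $A$-bimodule map
\[
\phi \colon B_1 \to X_0, \qquad a \otimes b \mapsto a \eta_0 b.
\]

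The main step is to show $\phi$ is an isomorphism. Since $\tilde F$ is an equivalence onto a rigid semisimple target, $\cC_-$ is a unitary fusion category, so under algebraic Haag duality the arguments of Proposition~\ref{prop:AisSEApartI} and Corollary~\ref{cor:AisSEApartII}---which only require Haag duality, the fusion property of $\cC_-$, and duality of DHR bimodules---yield the $B_0$-bimodule decomposition $A \simeq \bigoplus_{Y \in \Irr(\cC_-)} Y \otimes \Omega(Y^*)$. The same $B_0$-bimodule computation as in the preceding subsection then gives $(B_1)_- \simeq \bigoplus_Y Y \otimes_{A_-} Y^* \simeq Z^\reg \simeq (X_0)_-$ as $\cC_-$-objects, and the indices match: $[B_1 : A] = [A : B_0] = \sum_Y d(Y)^2 = d(Z^\reg) = d(X_0)$. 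Tracking the image of the Jones projection $e$, which lies in the unit summand of $(B_1)_-$, to $\eta_0$, which lies in the unit summand of $(X_0)_-$, under the restriction $\phi|_{(B_1)_-}$, together with the equal indices, forces $\phi$ to be an $A$-bimodule isomorphism.

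The principal obstacle is verifying that $\phi|_{(B_1)_-}$ actually coincides with the canonical iso of the two copies of $Z^\reg$, rather than some nontrivial $\cC_-$-endomorphism. Because $Z^\reg$ is not simple in $\cZ(\cC_-)$ and its $\cC_-$-endomorphism algebra is generally larger than $\End_{\cZ(\cC_-)}(Z^\reg)$, a naive ``unit generates'' argument does not suffice. What does work is the rigidity of the $A$-bimodule structure: the left $A$-action on $X_0$ is reconstructed from the half-braiding of $Z^\reg$, which in turn is pinned down by the duality pairings appearing in the $B_0$-bimodule decomposition of $A$; these are precisely the data governing the $A$-bimodule structure of $B_1 = A \otimes_{B_0} A$, so the $A$-bilinearity of $\phi$ together with $\phi(e) = \eta_0$ uniquely determines $\phi|_{(B_1)_-}$ as the canonical identification.
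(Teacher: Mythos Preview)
Your approach has a genuine gap at precisely the step you flag. Knowing that $(B_1)_-$ and $(X_0)_-$ are isomorphic in $\cC_-$ and that the indices agree shows $B_1$ and $X_0$ are \emph{abstractly} isomorphic only once both are known to lie in a common semisimple category where these invariants suffice; but $B_1$ is not yet known to be DHR, so you cannot invoke the equivalence $\tilde F$ to conclude. What remains is to show that the specific map $\phi$ is bijective, and your final paragraph does not do this: the claim that ``the $A$-bilinearity of $\phi$ together with $\phi(e)=\eta_0$ uniquely determines $\phi|_{(B_1)_-}$ as the canonical identification'' is an assertion, not an argument. You would need either to produce an explicit inverse or to prove $\phi$ is isometric for the $E_1$-inner product on $B_1$, and neither is immediate since the inner products on $B_1$ and $X_0$ have a priori unrelated origins. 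A smaller issue: the decomposition $A \simeq \bigoplus_Y Y \otimes \Omega(Y^*)$ you invoke rests, via Corollary~\ref{cor:decomp-A-as-B0-bimod}, on charge-transporter generation, which is not among the stated hypotheses; you would have to extract it from the assumed equivalence.

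The paper's argument is quite different and avoids any comparison map. It introduces the full subcategory $\mathcal{E} \subset \Corr^d(A)$ of correspondences whose $B_0$-restriction lies in the image of $\cC_- \boxtimes \cC_+$, and uses Q-system completeness of the bicategory of C$^*$-algebras and correspondences to identify $\mathcal{E}$ with the category of $A$-$A$-bimodules internal to $\cC_- \boxtimes \cC_+$; since $A$ realizes the symmetric enveloping Q-system, this gives $\mathcal{E} \simeq \cZ(\cC_-)$. One checks directly that $B_1 = A \boxtimes_{B_0} A \in \mathcal{E}$ and $\DHR(A_\bullet) \subseteq \mathcal{E}$, and then the hypothesis $\DHR(A_\bullet) \simeq \cZ(\cC_-)$ forces the full inclusion $\DHR(A_\bullet) \subseteq \mathcal{E}$ to be an equality by comparing the finite number of irreducibles on each side. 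This counting argument never singles out $Z^\reg$ or the preimage $X_0$, and so sidesteps entirely the question of whether a particular map is an isomorphism.
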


\begin{proof}
Given an $A$-$A$-correspondence $H$ with right $A$-valued inner product $\langle \xi , \eta \rangle_A$, we restrict $H$ to a $B_0$-$B_0$-bimodule and equip it with the right $B_0$-valued inner product
\[
\langle \xi\ |\ \eta \rangle_{B_0} = E \left( \langle \xi\ |\ \eta \rangle_A \right).
\]
Since $B_0 \overset{E}{\subset} A$ is a finite index inclusion, $H$ is complete for this $B_0$-valued inner product, and it becomes a $B_0$-$B_0$ correspondence.
This construction provides a functor $\pi\colon \Corr(A) \to \Corr(B_0)$. Let $\mathcal{E}$ be the full sub-category of dualizable $A$-$A $-correspondences $H$ such that $\pi(H)$ is in the image of $\cC_- \boxtimes \cC_+$ inside $\Corr(B_0)$. Then since the 2-category of C$^*$-algebras and correspondences is Q-system complete~\cite{MR4419534}, $\mathcal{E}$ is equivalent to the category of $A$-$A$ bimodules internal to $\cC_- \boxtimes \cC_+$. Then $B_{1}=A\boxtimes_{B_{0}} A\in \mathcal{E}$.

On the other hand, since $A$ is a realization of the symmetric enveloping Q-system in $\cC_-\boxtimes\cC^\opo_{-}\simeq \cC_- \boxtimes \cC_+$, then $\mathcal{E}\simeq Z(\cC_{-})$. But since $\pi(\DHR(A_\bullet))\subset \cC_-\boxtimes \cC_{+}$, $\DHR(A_\bullet)\subseteq \mathcal{E}$. Finally since $\DHR(A_\bullet)\simeq Z(\cC_-)$, then $\DHR(A_\bullet)=\mathcal{E}$ so $B_{1}\in \DHR(A_\bullet)$. \end{proof}

\section{Abstract spin chains from C\texorpdfstring{${}^*$}{*}-2-categories}\label{sec:lattice-from-inv-bimod}

In this section, we will construct abstract spin chains in the spirit of the fusion spin chains studied in~\cite{2304.00068}.
However, the categories we consider will be more general, and the resulting net of algebras will not be translation invariant.
We will illustrate the various conditions we have introduced in the previous section for this class of abstract spin chains.

Let us fix a rigid C$^*$-2-category $\cC=(\cC_{i j})_{i, j\in S}$ with finitely many $0$-cells.
Thus, we have a finitely many collection of rigid C$^*$-categories $\cC_i$, and invertible $\cC_i$-$\cC_j$-bimodule categories $\cC_{ij}$, together with monoidal products $\cC_{i j} \times \cC_{j k} \to \cC_{i k}$ admitting standard associativity.
Given $X \in \cC_{i j}$, we denote the normalized categorical trace on $\End(X)$ by $\tr_X$.
Note that we have
\[
\tr_{X \otimes Y} = \tr_X \circ (\id \otimes \tr_Y) = \tr_Y \circ (\tr_X \otimes \id)
\]
on $\End(X \otimes Y)$ for $Y \in \cC_{j k}$.

We label the discrete lattice $\bZ$ with objects in $\cC$, so that we have a sequence $(i_n)_{n \in \bZ}$ of indexes in $S$.
For $a,b\in\bZ$ denote $\cC_{a,b}=\cC_{i_a,i_b}$.
We fix a sequence of composable $1$-cells $X_n\in \cC_{{n},{n+1}}$ fitting into the following diagram of $0$- and $1$-cells:
\begin{equation*}
\begin{tikzcd}
\cdots \arrow[r] & i_n \arrow[r, "X_n"] & i_{n + 1} \arrow[r] & \cdots
\end{tikzcd}
\end{equation*}

We then have a net $A_\bullet$ defined by
\begin{equation}\label{eq:net-from-2-category}
A_{[a, b]} = \End(X_a \otimes \cdots \otimes X_b).
\end{equation}
As a shorthand notation, we will write
\[
X_{[a,b]} = X_a \otimes X_{a+1} \otimes \cdots \otimes X_b,
\]
so that we have $A_{[a,b]} = \End(X_{[a, b]})$.

\begin{assumption}
We choose $(X_n)_n$ so that $A$ has the trivial center.
\end{assumption}

\subsection{Longo--Roberts type recognition for C\texorpdfstring{$^*$}{*}-algebraic models}
Let us recall a variation of the Longo--Roberts realization of C$^*$-tensor categories~\cite{MR1444286}, in the framework of $2$-categories and bimodules.

We fix a sequence of composable $1$-cells $(X_{n})_{n < 0}$ indexed by negative integers.
In this section we write
\[
A = \varinjlim_{a \to -\infty} \End(X_{[a, -1]}).
\]
We keep the standing assumption that $A$ has trivial center.
Let $j$ be a $0$-cell, and $Y$ be an object of $\cC_{i_0, j}$.
We then have an algebra
\[
A_Y = \varinjlim_{a \to -\infty} \End(X_{[a, -1]} \otimes Y),
\]
which naturally contains $A$.

The algebra $A_Y$ has an distinguished faithful state $\omega$, characterized by $\omega(a) = \tr_{X_{[a, -1]} \otimes Y}(a)$ for $a \in \End(X_a \otimes \cdots X_{-1} \otimes Y)$.

We will cosider the conditional expectation $E^Y \colon A_Y \to A$ given as the limit of $\id_{X_{[a, -1]}} \otimes \tr_Y$ as $a \to -\infty$.

\begin{lemma}\label{lem:rel-comm-fin-dim}
The relative commutant $A_Y \cap A'$ is finite-dimensional.
\end{lemma}

\begin{proof}
On one hand, we have the Pimsner--Popa type inequality
\[
(\id_{X_{[a, -1]}} \otimes \tr_Y)(a) \ge \frac{1}{d_Y^2} a
\]
for positive elements $a \in \End(X_{[a, -1]} \otimes Y)$.
This shows that the conditional expectation $E^Y$ satisfies the same inequality.

On the other hand, when $a$ is an element of $A_Y \cap A'$, its expectation $E^Y(a)$ must be in the center of $A$, hence is a scalar.
Then this scalar should agree with $\omega(a)$.

This shows that $\omega$ is a state on $A_Y \cap A'$ with the Pimsner--Popa type inequality.
A C$^*$-algebra admits such a state if and only if it is finite-dimensional, hence we obtain the claim.
\end{proof}

Consider the conditional expectation $E_n \colon A_Y \to \End(X_{[-n, -1]} \otimes Y)$ given as the limit of
\[
E^m_n = (\tr_{X_{[-m, -n-1]}} \otimes \id) \colon \End(X_{[-m, -1]} \otimes Y) \to \End(X_{[-n, -1]} \otimes Y),
\]
under $m \to \infty$.
As the extreme case, we write $E_0$ for the conditional expectation $A_Y \to \End(Y)$.

\begin{proposition}\label{prop:LR-recog}
Suppose that $A$ has a trivial center.
The natural homomorphism from $\End(Y)$ into $A_Y \cap A'$ is an isomorphism.
\end{proposition}

\begin{proof}
First let us assume that there is a subsequence of integers $n_k$ such that the $0$-cells $i_{-n_k}$ agree with $i_0$, hence $X_{[-n_k, -1]}$ are all in the tensor category $\cC_0$.

Let $a$ be an element of $A_Y \cap A'$, and put $a_k = E_{n_k}(a)$.
Then $a_k$ belongs to the relative commutant $\End(X_{[-{n_k}, -1]} \otimes Y) \cap \End(X_{[-{n_k}, -1]})'$.
This means that, for each irreducible class $V$ in $\cC_0$, and when we take an isometric inclusion $u \colon V \to X_{[-{n_k}, -1]}$, the element
\begin{equation}\label{eq:isotypic-component}
(u^* \otimes \id_Y) a_k (u \otimes \id_Y) \in \End(V \otimes Y)
\end{equation}
is independent of $u$.
Let $a_{k, 0}$ be such an element for the case of $V = 1_{\cC_0}$, the monoidal unit.

Then $a_{k, 0}$ is an element of $\End(Y)$.
We are going to check $a = \lim_k a_{k,0}$, which implies the assumption in this case.
Since $A_Y \cap A'$ is finite-dimensional by Lemma~\ref{lem:rel-comm-fin-dim}, it is enough to check the convergence in the $2$-norm for $\omega$, that is,
\[
\omega((a - a_{k,0})^* (a - a_{k,0})) \to 0 \quad (k \to \infty).
\]

We first claim that
\[
\omega(a^* b) = \tr_Y(a_{k,0}^* b)
\]
holds for any $b \in \End(Y)$.
Indeed, $a^* b$ is still an element of $A_Y \cap A'$, hence $E^Y(a^* b)$ is a scalar equal to the left hand side.
On the other hand, computing~\eqref{eq:isotypic-component} for $a^* b$ and $V = 1_{\cC_0}$ picks up the factor $\tr_Y(a_{k,0}^* b)$, hence the claim.

Using this claim for $b = a_{k, 0}$, and taking conjugate, we now have
\[
\begin{split}
\omega((a - a_{k,0})^* (a - a_{k,0})) &= \omega(a^* a) - \tr_Y(a_{k,0}^* a_{k,0}) - \tr_Y(a_{k,0}^* a_{k,0}) + \tr_Y(a_{k,0}^* a_{k,0})\\
&= \omega(a^* a) - \tr_Y(a_{k,0}^* a_{k,0}).
\end{split}
\]

Now, since the $a_k$ converges to $a$ in norm, we have a norm approximation
\[
a_k^* a_k \sim a^* a \sim E_{n_k}(a^* a) = (a^* a)_k
\]
for big $k$.
In particular we can replace $a_{k,0}^* a_{k,0}$ by $(a^* a)_{k, 0}$ by a small norm error for big $k$.
Since the scalar $\tr_Y((a^* a)_{k, 0})$ agrees with $\omega(a^* a)$, we have the desired convergence.

Finally, let us remove the assumption that $i_0$ happens infinitely many times amonth the $i_{-n}$.
By the finiteness of $0$-cells, we can first find some $i_{-m}$ such that $i_{-n_k} = i_{-m}$ holds for some sequence $(n_k)_k$.
Then, by what we proved for $X_{[-m+1, -1]} \otimes Y$, we have
\[
A_Y \cap A' = \End(X_{[-m+1, -1]} \otimes Y) \cap A'.
\]
Now, the categories $\cC_{i j}$ all appear as non-full subcategories of $\cC_{-m}$.
In particular, we can find a copy of the projection
\[
e = \frac{1}{d_{X_{[-m+1, -1]}}} R_{X_{[-m+1, -1]}} R_{X_{[-m+1, -1]}}^*
\]
inside $A$.
If $a$ is an element of $A_Y \cap A'$, the commutation of $a$ and $e$ implies that we have $a = (\tr_{X_{[-m+1, -1]}} \otimes \id)(a)$, hence we obtain $a \in \End(Y)$.
\end{proof}

\subsection{Algebraic Haag duality}

Let us come back to the net $A_\bullet$ from a bi-infinite sequence $(X_n)_{n \in \bZ}$ of $1$-cells.
We are goint to show that the net $A_\bullet$ has the algebraic Haag duality property.

\begin{proposition}
Given two integers $a < b$, the natural inclusion
\[
A_{[a, b]} \to (A_{(-\infty, a-1]} \vee A_{[b + 1, \infty)})' \cap A
\]
is surjective.
\end{proposition}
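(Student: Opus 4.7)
The plan is to prove this in two stages: reduce to a finite-dimensional intertwiner problem inside one of the local algebras, and then resolve that problem using the categorical structure afforded by strong generation.

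For the reduction, let $T$ lie in the right-hand side. Since $A$ is AF with $A=\varinjlim A_{[c,d]}$, the element $T$ lies in some local algebra $A_{[a_0,b_0]}$ with $[a,b]\subset [a_0,b_0]$. I will enlarge the interval by choosing $a'\le a_0$ and $b'\ge b_0$ with both $a-a'\ge r$ and $b'-b\ge r$, so that $T\in A_{[a',b']}=\End(X_L\otimes X_M\otimes X_R)$, where $X_L=X_{[a',a-1]}$, $X_M=X_{[a,b]}$, $X_R=X_{[b+1,b']}$. The commutation hypothesis then yields, at the very least, that $T$ commutes with $\End(X_L)\otimes\id_M\otimes\id_R$ and $\id_L\otimes\id_M\otimes\End(X_R)$; moreover, for any $a''\le a'-1$, the element $\id_{X_{[a'',a'-1]}}\otimes T$ of $A_{[a'',b']}$ commutes with the whole of $\End(X_{[a'',a-1]})\otimes\id_{X_{[a,b']}}$, and symmetrically on the right.

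It remains to show that, under these commutation conditions, $T=\id_{X_L}\otimes S\otimes\id_{X_R}$ for some $S\in\End(X_M)$. Here strong generation enters: since $[a',a-1]$ and $[b+1,b']$ both have length at least $r$, we can decompose $X_L\simeq\bigoplus_\alpha Y_\alpha^{\oplus n_\alpha}$ and $X_R\simeq\bigoplus_\beta Z_\beta^{\oplus m_\beta}$ where every simple of $\cC_{a',a}$ (resp.\ $\cC_{b+1,b'+1}$) appears with positive multiplicity. Choosing isometries $w_{\alpha,k}\colon Y_\alpha\to X_L$ and $w'_{\beta,l}\colon Z_\beta\to X_R$, commutation with $\End(X_L)\otimes\id\otimes\id$ and $\id\otimes\id\otimes\End(X_R)$ forces
\[
T=\bigoplus_{\alpha,\beta}\id_{\bC^{n_\alpha}}\otimes T_{\alpha,\beta}\otimes\id_{\bC^{m_\beta}},\qquad T_{\alpha,\beta}\in\End(Y_\alpha\otimes X_M\otimes Z_\beta).
\]

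The crux of the proof is to promote these isotypic components $T_{\alpha,\beta}$ to a single endomorphism $S\in\End(X_M)$ with $T_{\alpha,\beta}=\id_{Y_\alpha}\otimes S\otimes\id_{Z_\beta}$. For this I will use the extra freedom to embed $T$ into larger local algebras: for $a''\ll a'$ and $b''\gg b'$, strong generation guarantees that $X_{[a'',a-1]}=X_{[a'',a'-1]}\otimes X_L$ contains every simple $W\in\cC_{a'',a}$, and its endomorphism algebra carries fusion and splitting morphisms $v\colon W\to X_{[a'',a'-1]}\otimes X_L$ that do \emph{not} lie in $\End(X_{[a'',a'-1]})\otimes\End(X_L)$. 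Commutation of $\id\otimes T$ with these morphisms — translated via Frobenius reciprocity through the standard solutions to the conjugate equations in the rigid $\mathrm{C}^*$-$2$-category $\cC$ — identifies the various $T_{\alpha,\beta}$ for different $\alpha$; the symmetric argument on the right identifies them across $\beta$. This yields the common $S\in\End(X_M)$ witnessing $T\in A_{[a,b]}$.

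The main obstacle is this last identification step. The commutation with $\End(X_L)$ and $\End(X_R)$ alone leaves the $T_{\alpha,\beta}$ essentially unconstrained within each fiber, and it is only by invoking the fusion morphisms inside the larger algebras $\End(X_{[a'',a-1]})$ and $\End(X_{[b+1,b'']})$ — whose existence is precisely what strong generation with parameter $r$ guarantees — that one obtains a Longo--Roberts-type recognition of the commutant. Once this is in place, the conclusion of Proposition~\ref{prop:LR-recog} is immediate.
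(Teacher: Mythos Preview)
Your reduction step contains a genuine gap. You write ``Since $A$ is AF with $A=\varinjlim A_{[c,d]}$, the element $T$ lies in some local algebra $A_{[a_0,b_0]}$.'' This is false: an element of an inductive limit C$^*$-algebra is in general only a \emph{norm limit} of elements in the finite stages, not itself an element of any one of them. The union $\bigcup_{[c,d]} A_{[c,d]}$ is dense in $A$ but not equal to it, and a priori there is no reason the commutant $(A_{(-\infty,a-1]}\vee A_{[b+1,\infty)})'\cap A$ should sit inside this union. Everything after this point in your argument presupposes that $T$ is already a morphism in some $\End(X_{[a',b']})$, so the entire finite-dimensional categorical analysis that follows is built on an unjustified hypothesis.

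The paper's proof confronts exactly this difficulty. It introduces compatible conditional expectations $E^\infty_{m,n}\colon A\to C_{m,n}=\End(X_{[a-m,b+n]})$ coming from the categorical trace, sets $x_{m,n}=E^\infty_{m,n}(x)$, and observes that $x_{m,n}\to x$ in norm while each $x_{m,n}$ lies in the relative commutant $D_{m,n}'\cap C_{m,n}$. The real work is then an analytic estimate: using the categorical trace and a $2$-norm argument, one shows that the ``tensor-unit component'' $x_{k,0}\in A_{[a,b]}$ of $x_{m_k,n_k}$ (for suitable sequences $m_k,n_k$) is close to $x_{m_k,n_k}$ itself, so that $x$ is a norm limit of elements of the finite-dimensional $A_{[a,b]}$ and hence belongs to it. Only after this step does the paper carry out a Jones-projection argument, similar in spirit to your isotypic decomposition, to pass from a possibly larger $A_{[a',b']}$ down to $A_{[a,b]}$. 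Your second stage is thus reasonable in outline, but it cannot substitute for the missing approximation argument.
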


\begin{proof}
Conisder the conditional expectations
\[
E'_n \colon A \to A_{(-\infty, n]},
\]
given as the limit of expectations
\[
(\id \otimes \tr_{X_{[n, m]}}) \colon A_{(-\infty, m]} \to A_{(-\infty, n]} \quad (m > n).
\]
We then have
\[
E'_n({A_{(-\infty, a-1]}}' \cap A) = {A_{(-\infty, a-1]}}' \cap A_{(-\infty, n]},
\]
which is equal to $A_{[a, n-1]} = \End(X_{[a, n-1]})$ by Proposition~\ref{prop:LR-recog}.

Let us fix $a \in (A_{(-\infty, a-1]} \vee A_{[b + 1, \infty)})' \cap A$, and set $a_n = E'_n(a)$.
We then have $a = \lim_n a_n$ and $a_n \in A_{[a, n-1]} \cap {A_{[b + 1, n-1]}}'$.
This implies $a \in A_{[a, \infty)} \cap {A_{[b+1, \infty)}}'$, and again by Proposition~\ref{prop:LR-recog} we get $a \in A_{[a, b]}$.
\end{proof}

\subsection{Charge-transporter generation}

Now, let us further assume that each $\cC_i$ is a unitary fusion category, so that it has only finitely many irreducible classes.
Under this assumption, we can consider the following condition.

\begin{definition}
Let $X=\{X_n\}_{n\in \mathbb{Z}}$ be a sequence of composable $1$-cells in $\cC$.
We say that it has the \emph{strong generation} property if there exists $r>0$ such that for every interval $[a,b]$ of length larger than $r$ we have $Y \leq X_{[a,b]}$ for any simple $Y\in \cC_{a,b}$.
\end{definition} 

Note that, under strong generation, the C$^*$-algebras of the form $A_{[a, \infty)}$ and $A_{(-\infty, a]}$ have trivial center.
Indeed, given two minimal central projections $z$, $z'$ of $A_{[a, b]}$ (which correspond to simple objects in $\cC_{a, b}$), the generating property implies that there is an element $a \in A_{[a, b + r]}$ such that $z a z'$ is nontrivial.
This implies that there is no nontrivial ideal in $A_{[a, \infty)}$, see for example~\cite{MR1402012}*{Section III.4}.

We now produce a braided tensor functor $F\colon \cZ(\cC) \to \DHR(A_\bullet)$, and show the charge transporters for DHR bimodules in the image of $F$ generate the quasi-local algebra $A$.

The center $\cZ( \cC)$ is the braided C$^*$-category $\mathrm{\textbf{PseudoNat}}(\id_ \cC,\id_ \cC)$ of pseudo-natural equivalences of the identity.
An object $Z$ in $\cZ( \cC)$ amounts to a collection of invertible $1$-cells $Z_i\in \cC_i$, for $i \in S$, and invertible $2$-cells $c_Y\colon Z_i\otimes Y\to Y\otimes Z_j$ for a $1$-cell $Y\in\cC_{ij}$, obeying appropriate hexagon axioms.

Given such an object $Z=(\{Z_i\}_{i\in S}, c_Y)$ in $\cZ( \cC)$, we denote $Z_a= Z_{i_a}$ for $a\in\mathbb{Z}$.
For an interval $I=[a,b]$ consider the correspondence
\begin{equation}
F_{[a,b]}(Z) = \Hom_\cC(X_{[a,b]}, X_{[a,b]} \otimes Z_b)
\end{equation}
where $f\in A_{[a,b]}$ acts on $F_{[a,b]}(Z)$ by precomposition on the right and by postcomposition with $f\otimes\id_{Z_a}$ on the left.
Now, for $I=[a,b]\subset[c,d]=J$ we use the half-braidings of $Z$ to define an intertwiner
\[
\eta_{I,J}\colon F_{[a,b]}(Z)\to F_{[c,d]}(Z),\quad f\mapsto \id\otimes c_{X_{[c,a]}}\circ\id\otimes f\otimes \id.
\]
We obtain an $A$-correspondence by taking the inductive limit
\begin{equation}\label{eq:bicat_F(Z)}
F(Z) = \varinjlim_{a,b} F_{[a,b]}(Z).
\end{equation}
For an interval $I=[a,b]$, the canonical map into the limit is denoted by $\eta_I\colon F_{[a,b]}(Z)\to F(Z)$.

\begin{lemma}
For any $Z \in \cZ( \cC)$, the $A$-correspondence $F(Z)$ defined in~\eqref{eq:bicat_F(Z)} fulfills the locality condition and thus belongs to $\DHR(A_\bullet)$.
\end{lemma}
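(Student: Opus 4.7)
The plan is to exhibit, for every interval $I=[a,b]$ of length at least the strong generation parameter $r$, a projective basis of $F(Z)$ over $A$ consisting of vectors localized in $I$. The candidates will be the images $\eta_I(\xi_k) \in F(Z)$ of a projective basis $(\xi_k)_k$ of the finite-dimensional Hilbert $A_I$-module $F_I(Z) = \Hom_\cC(X_{[a,b]}, X_{[a,b]} \otimes Z_b)$; such a basis exists because $A_I = \End_\cC(X_{[a,b]})$ is a finite-dimensional C$^*$-algebra and $F_I(Z)$ is a finitely generated projective right Hilbert $A_I$-module under the categorical inner product.

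First I would verify localization. Any $g \in A_{I^c}$ belongs to $A_{[c,a-1]} \vee A_{[b+1,d]}$ for some $[c,d] \supset I$, and the representative of $\xi_k$ in $F_{[c,d]}(Z)$ is obtained by tensoring $\xi_k$ with identities on the tails $X_{[c,a-1]}$ and $X_{[b+1,d]}$ and applying the half-braiding of $Z$ to reposition the $Z$-factor on the right. Since $c_Y$ is natural in $Y$, the component of $g$ in the tail $[b+1,d]$ can be moved past the half-braiding; the component in $[c,a-1]$ is a tensor factor that commutes with everything; and on the central $[a,b]$-factor $g$ acts by the identity. A short diagram chase then yields $g \cdot \eta_I(\xi_k) = \eta_I(\xi_k) \cdot g$.

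The substantive step is showing that $(\eta_I(\xi_k))_k$ is in fact a projective basis for $F(Z)$ as a right Hilbert $A$-module. I plan to prove that for every $J = [c,d] \supset I$ the right-$A_J$-linear extension
\[
F_I(Z) \boxtimes_{A_I} A_J \to F_J(Z)
\]
of $\eta_{I,J}$ is a unitary isomorphism of right Hilbert $A_J$-modules. Passing to the inductive limit over $J$ then identifies $F_I(Z) \boxtimes_{A_I} A$ with $F(Z)$, under which $(\xi_k \boxtimes 1_A)_k$ is a projective basis that maps to $(\eta_I(\xi_k))_k$. Strong generation enters here crucially: because every simple of $\cC_{a,b+1}$ appears as a summand of $X_{[a,b]}$, semisimplicity together with Frobenius reciprocity allows any morphism $X_{[c,d]} \to X_{[c,d]} \otimes Z_d$ to be written as $\sum_k \eta_{I,J}(\xi_k) \circ a_k$ with $a_k \in A_J$, giving surjectivity of the extension map.

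I expect the delicate point to be verifying that the above extension is isometric, i.e., that the $A_J$-valued inner product on $F_J(Z)$ agrees with the one induced from $F_I(Z) \boxtimes_{A_I} A_J$. This reduces to comparing the categorical traces on $\End(X_{[c,d]})$ and $\End(X_{[a,b]})$ under the partial-trace maps implicit in the definition of the inner product, together with the unitarity of the half-braiding used in $\eta_{I,J}$. Once this compatibility is established, the standard transport of projective bases along Hilbert module isomorphisms completes the argument.
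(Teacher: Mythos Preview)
Your approach is correct and essentially matches the paper's. Both show that a projective basis of $F_I(Z)$ over $A_I$ transports to one of $F_J(Z)$ over $A_J$ for every $J\supset I$: the paper by directly verifying $\sum_k \eta_{I,J}(w^Y_k)\,\eta_{I,J}(w^Y_k)^* = \id$ for the explicit basis $w^Y_k = v^Y_k\circ p^Y_{a,b}$ (built from isometries $v^Y_k\colon Y\to X_{[a,b]}\otimes Z_b$ and projections $p^Y_{a,b}\colon X_{[a,b]}\to Y$, the latter existing by strong generation), you by arguing that the induced map $F_I(Z)\boxtimes_{A_I}A_J\to F_J(Z)$ is a unitary isomorphism and then transporting bases. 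The localization check is the same in both.

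One correction: the isometry step you flag as delicate is in fact immediate and involves no traces. The right $A_I$-valued inner product on $F_I(Z)=\Hom_\cC(X_{[a,b]},X_{[a,b]}\otimes Z_b)$ is simply $\langle f\mid g\rangle=f^*\circ g\in\End(X_{[a,b]})=A_I$; unitarity of the half-braiding $c$ then gives
\[
\eta_{I,J}(f)^*\,\eta_{I,J}(g)=(\id\otimes f^*\otimes\id)(\id\otimes c^*c\otimes\id)(\id\otimes g\otimes\id)=\id\otimes f^*g\otimes\id,
\]
which is exactly the image of $\langle f\mid g\rangle_I$ under $A_I\hookrightarrow A_J$. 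There are no partial traces in sight. The actual content lies where you put it, in surjectivity: strong generation ensures every irreducible summand of $X_{[a,b]}\otimes Z_b$ already occurs in $X_{[a,b]}$, whence any projective basis $(\xi_k)$ of $F_I(Z)$ over $A_I$ satisfies $\sum_k\xi_k\xi_k^*=\id_{X_{[a,b]}\otimes Z_b}$, and the projective basis property at level $J$ follows by the one-line computation the paper carries out.
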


\begin{proof}
The proof follows mutatis mutandis from the argument in~\cite{2304.00068}*{Lemma 4.15}.
We spell out some details next.

Let $[a,b]$ be an interval of diameter greater or equal to $r$.
For $Y \in \Irr(\cC_{a,b})$, let $\{v^Y_k\colon Y \to X_{[a,b]} \otimes Z_b\}_k$ be mutually orthogonal partial isometries with $\sum_k v^Y_k\circ (v^Y_k)^*$ equal to the projection of $X_{[a,b]} \otimes Z_b$ onto the $Y$-isotypical component.
In particular, 
\[
\sum_{Y \in \Irr(\cC_{a,b})} \sum_k v^Y_k \circ(v^Y_k)^* = \id_{X_{[a,b]} \otimes Z_b}.
\]
For each $Y 
\in \Irr(\cC_{a,b})$ there exists a projection $p^Y_{a,b}\colon X_{[a,b]} \to Y$.
Define
\[
w^Y_k = v^Y_k \circ p^Y_{a,b}\colon X_{[a,b]} \to X_{[a,b]} \otimes Z_b.
\]
By construction,
\[
\sum_{Y \in \Irr(\cC_{a,b})} \sum_k w^Y_k \circ(w^Y_k)^* = \id_{X_{[a,b]} \otimes Z_b}.
\]
In other words, $\{w^Y_k\}_{Y,k}$ constitute a projective basis for $F_{a,b}(Z)$ over $\End(X_{[a,b]})$.

Let $I = [a,b] \subset [c,d] =J$.
A routine computation shows that the map $\eta_{I,J}\colon F_{[a,b]}(Z) \to F_{[c,d]}(Z)$ preserves the projective basis, that is, $\{\eta_{I,J}(w^Y_k)\}_{Y,k}$ is also a projective basis for $F_{[c,d]}(Z)$ over $\End(X_{[c,d]})$:
\[ 
\begin{aligned}
\sum_{Y \in \Irr(\cC_{a,b}), k} \eta_{I,J}(w^Y_k) \circ\eta_{I,J}(w^Y_k)^* &=
\sum_{Y \in \Irr(\cC_{a,b}), k} \id\otimes c_{X_{[c,a]}}\circ\id\otimes w^Y_k\circ
 (w^Y_k)^*\otimes\id\circ\id\otimes c_{X_{[c,a]}}^*\\
&=
\id\otimes c_{X_{[c,a]}}\circ\id_{X_{[a,b]} \otimes Z_b}\circ\id\otimes c_{X_{[c,a]}}^*= \id_{X_{[c,d]} \otimes Z_d}
\end{aligned}
\]
Therefore, in the inductive limit $F(Z)$, we obtain a projective basis over $A$.

As in the fusion case, one verifies that $\{\eta_I(w^Y_k)\}_{Y,k}$ can be localized in any interval of diameter at least $r$.
Indeed, consider intervals $I = [a,b]$ and $J=[c,d]\subset (-\infty,a)$.
For $f\in A_J$ we have, by naturality of the half-braiding and functoriality of $\otimes$, that
\[
\begin{aligned}
\iota_{J}(f)\rhd\eta_I(w^Y_k)&=\eta_{[c,b]}(f\otimes \id_{X_{[d,b]}}\circ 
\id_{X_{[a,b]}}\otimes c_{X_{[c,a]}}\circ \id_{X_{[c,a]}}\otimes w^Y_k)\\
&=\eta_{[c,b]}(\id_{X_{[a,b]}}\otimes c_{X_{[c,a]}}\circ f\otimes \id_{X_{[d,a]}}\otimes w^Y_k)\\
&=\eta_{[c,b]}(\id_{X_{[a,b]}}\otimes c_{X_{[c,a]}}\circ \id_{X_{[c,a]}}\otimes w^Y_k \circ f\otimes \id_{X_{[d,b]}})= \eta_I(w^Y_k)\lhd\iota_{J}(f)
\end{aligned}
\]
A similar argument shows locality in the case that $J=[c,d]\subset (b,\infty)$.
\end{proof}
We obtain a functor 
\[
\cZ( \cC) \rightarrow \DHR(A_\bullet), \quad Z \mapsto F(Z).
\]

\begin{proposition}\label{prop:pointcutgenerationformultifusionmodel}
The abstract spin chain $A_{\bullet}$ satisfies charge-transporter generation, Condition~\ref{cond:reduced-net}.
\end{proposition}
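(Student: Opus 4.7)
The plan is to use the braided tensor functor $F \colon \cZ(\cC) \to \DHR(A_\bullet)$ just constructed, together with the strong generation hypothesis, to show that the charge transporters coming from bimodules in the image of $F$ suffice to generate $A$. By Proposition~\ref{prop:reduced-algebra-generation}, Condition~\ref{cond:reduced-net} amounts to showing that the linear span of the inner products $\langle F(Z)_-, F(Z)_+ \rangle$, as $Z$ ranges over $\cZ(\cC)$, together with $B_0 = A_- \otimes A_+$, exhausts $A$. Since $A = \overline{\bigcup_{[a,b]} A_{[a,b]}}$, I would reduce to showing $A_{[a,b]} \subseteq C_0$ for each interval $[a,b]$ with $-a, b \geq r$. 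Under the factorization $X_{[a,b]} = X_{[a,-1]} \otimes X_{[0,b]}$, the block-diagonal subalgebra $A_{[a,-1]} \otimes A_{[0,b]}$ lies in $B_0$, so only the off-diagonal matrix units of $\End(X_{[a,b]})$, namely those that factor through a simple $W \in \Irr\cC_{i_a, i_{b+1}}$ but do not belong to $A_{[a,-1]} \otimes A_{[0,b]}$, need to be exhibited as charge transporters.

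For $Z = (Z_i) \in \cZ(\cC)$ and a pair of morphisms $f \in \Hom_\cC(X_{[a,-1]}, X_{[a,-1]} \otimes Z_L)$, $g \in \Hom_\cC(X_{[0,b]}, X_{[0,b]} \otimes Z_R)$, with $Z_L$ and $Z_R$ denoting the components of $Z$ at the cut point and right endpoint respectively, I would use the structural maps $\eta$ of the inductive limit to obtain $\bar f = (\id_{X_{[a,-1]}} \otimes c_{X_{[0,b]}}) \circ (f \otimes \id_{X_{[0,b]}}) \in F(Z)_-$ and $\bar g = \id_{X_{[a,-1]}} \otimes g \in F(Z)_+$ inside $F_{[a,b]}(Z) = \Hom(X_{[a,b]}, X_{[a,b]} \otimes Z_R)$, where $c_{X_{[0,b]}} \colon Z_L \otimes X_{[0,b]} \to X_{[0,b]} \otimes Z_R$ is the half-braiding. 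Since the right $A_{[a,b]}$-valued inner product on this space is $(h_1, h_2) \mapsto h_1^* \circ h_2$, this yields
\[
\langle \bar f, \bar g \rangle = (f^* \otimes \id_{X_{[0,b]}}) \circ (\id_{X_{[a,-1]}} \otimes c_{X_{[0,b]}}^{-1}) \circ (\id_{X_{[a,-1]}} \otimes g) \in \End(X_{[a,b]}),
\]
an endomorphism factoring through $X_{[a,-1]} \otimes Z_L \otimes X_{[0,b]}$.

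To cover every off-diagonal matrix unit, I would use the right adjoint $R_{i_0} \colon \cC_{i_0} \to \cZ(\cC)$ of the evaluation functor at the vertex $i_0$, i.e., the standard induction construction in the multifusion $2$-category $\cC$; it sends $Y \in \cC_{i_0}$ to the central object whose component at vertex $j$ is $\bigoplus_{V \in \Irr\cC_{i_j,i_0}} V \otimes Y \otimes V^*$, endowed with a canonical half-braiding built from rigidity. For each simple $W \in \Irr\cC_{i_a,i_{b+1}}$ appearing in $X_{[a,b]}$ (all of them, by strong generation), the choice $Y = X_{[a,-1]}^* \otimes W \otimes X_{[0,b]}^* \in \cC_{i_0}$ makes $Y$ a direct summand of $R_{i_0}(Y)_{i_0}$, and Frobenius reciprocity together with the adjunction $\Hom_{\cZ(\cC)}(Z,R_{i_0}(Y)) \cong \Hom_{\cC_{i_0}}(Z_{i_0},Y)$ allows one to select $f$ and $g$ so that the charge transporter above realizes any prescribed matrix unit between the $W$-isotypic components of $X_{[a,b]}$. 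Summing over $W$ and over a basis of such matrix units then spans every off-diagonal piece of $A_{[a,b]}$. The main obstacle will be this last compatibility check, namely verifying that the canonical half-braiding on $R_{i_0}(Y)$ interacts correctly with the chosen $f$ and $g$ to produce the desired matrix unit; this should reduce to a coherence computation using the universal property of the induction $R_{i_0}$ and the rigid structure of $\cC$.
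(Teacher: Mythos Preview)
Your approach is correct and will work, but it is organized differently from the paper's and you over-engineer the last step. The paper uses the explicit projective bases $w^Y_k = v^Y_k\, p^Y$ from the preceding Lemma, with $Y$ running over $\Irr\cC_{i_a,i_{b+1}}$, and observes that the resulting charge transporters contain $q^*v_1^*v_2\,p$; since $Y$ is simple, $v_1^*v_2\in\End(Y)$ is a scalar, so every matrix unit $q^*p$ appears as soon as one exhibits some $Z\in\cZ(\cC)$ with $\Hom(Y,X_{[a,b]}\otimes Z_b)\neq 0$. No induction functor is invoked. You instead factor the transporter through $X_{[a,-1]}\otimes Z_{i_0}\otimes X_{[0,b]}$ and use the adjoint $R_{i_0}$ to realize each simple $Y'\in\cC_{i_0}$ as a summand of some $Z_{i_0}$; this is the coend decomposition of $\End(X_{[a,b]})$ at the cut point rather than the isotypic decomposition of $X_{[a,b]}$ itself. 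Both decompositions yield the same span.

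Two simplifications of your plan. First, the ``compatibility check'' you flag as the main obstacle evaporates: since the half-braiding $c_{X_{[0,b]}}$ is unitary, the map $g\mapsto c^{-1}g$ is a bijection $\Hom(X_{[0,b]},X_{[0,b]}\otimes Z_R)\to\Hom(X_{[0,b]},Z_{i_0}\otimes X_{[0,b]})$, so you may parametrize directly by $g'$ on the right; then for an isometry $\iota\colon Y'\hookrightarrow Z_{i_0}$ and $f=(\id\otimes\iota)f_0$, $g'=(\iota\otimes\id)g'_0$, the transporter is exactly $(f_0^*\otimes\id)(\id\otimes g'_0)$, with no residual half-braiding term to control. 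Second, the specific choice $Y=X_{[a,-1]}^*\otimes W\otimes X_{[0,b]}^*$ is unnecessary: once every simple $Y'\in\cC_{i_0}$ sits inside some $Z_{i_0}$, the morphisms $(f_0^*\otimes\id)(\id\otimes g'_0)$ over all simple $Y'$ already span $\End(X_{[a,b]})$ by semisimplicity.
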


\begin{proof}
Let $[a,b]$ be an interval of diameter at least $r$ containing the origin. From Proposition~\ref{prop:pointcutgenerationformultifusionmodel} one can deduce that the space of charge transporters $\langle F(Z)_-|F(Z)_+ \rangle$, with $Z \in \cZ(\cC)$, are obtained as limits of linear combinations of morphism $X_{[a,b]} \to X_{[a,b]}$ of the form
\[ (\id_{X_{[a,-1]}} \otimes g) q^* v_1^* v_2 p (f \otimes \id_{X_{[0,b]}}), \]
where $f \in \End(X_{[a,-1]})$, $g \in \End(X_{[0,b]})$, $v_1,v_2 \in \Hom(Y,X_{[a,b]}\otimes Z_b)$ and $p,q \in \Hom(X_{[a,b]},Y)$ for some irreducible object $Y$. Thus, it suffices to show that an arbitrary morphism $X_{[a,b]}$ can be as linear combinations of morphisms of the form $q^* v_1^* v_2 p$.

By complete reducibility, it suffices to show that, for $[a,b]$ large enough, every $f\colon X_{[a,b]} \to X_{[a,b]}$ factoring as $f = \lambda q^*p$ for projections $p,q\colon X_{[a,b]} \to Y$, where $Y \in \Irr(\cC_{i_a,i_b})$, fits in a commutative diagram of the form
\begin{center}
\begin{tikzcd}
{X_{[a,b]}} \arrow[dd, "f"'] \arrow[rr, "p"] & & Y \arrow[ld, "\lambda"] \arrow[dd, "v_2"] \\
 & Y \arrow[ld, "q^*"'] & \\
{X_{[a,b]}} & & {X_{[a,b]}\otimes Z_b} \arrow[lu, "v_1^*"] \arrow[ll, "q^*u^*"]
\end{tikzcd}
\end{center}
for some $Z \in \cZ( \cC)$ having $Z_a$ in the $a$-component.
This can be achieved by taking any $Z$ such that $X_{[a,b]}$ has non-trivial $Y$-isotypical component, and one can take $v_2$ to be an element such that $v_2p \in \Hom(X_{[a,b]}, X_{[a,b]} \otimes Z_b)$ is part of a basis for $\Hom(X_{[a,b]}, X_{[a,b]} \otimes Z_b)$ over $\End(X_{[a,b]})$, and similarly for $v_1$ and $v_1q$, as in the proof of the previous Lemma.
\end{proof}

\subsection{Local alignment}

Finally, we prove that our model from fusion $2$-category is locally aligned (Condition~\ref{cond:B1-is-DHR}).

\begin{proposition}
As an $A$-bimodule, $B_1$ is isomorphic to $F(Z^\reg)$. In particular, the abstract spin chain $A_{\bullet}$ is locally aligned.
\end{proposition}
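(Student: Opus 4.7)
The plan is to realize both sides as directed limits of finite-dimensional vector spaces and to match them level-by-level via a map built from the pseudo-natural half-braiding $c^\reg$ and the rigidity of $\cC$. Fix an interval $[a,b]$ containing $\{-1, 0\}$ and set $Y := X_{[a,-1]}$, $Z := X_{[0,b]}$, so that $A_{[a,b]} = \End(Y \otimes Z)$ while $A_{[a,b]} \cap B_0 = \End(Y) \otimes \End(Z)$. The Jones basic construction then presents $(B_1)_{[a,b]}$ as the relative tensor product $\End(Y \otimes Z) \otimes_{\End(Y) \otimes \End(Z)} \End(Y \otimes Z)$ with its natural $A_{[a,b]}$-bimodule structure.

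Applying $c^\reg$ to transport the regular object from the right endpoint to the cut at $i_0$, the right side becomes
\[
F(Z^\reg)_{[a,b]} \simeq \Hom(Y \otimes Z,\, Y \otimes Z^\reg_{i_0} \otimes Z) = \bigoplus_{j \in S,\, V \in \Irr(\cC_{i_0, j})} \Hom(Y \otimes Z,\, Y \otimes V \otimes V^* \otimes Z).
\]
I would then define $\Phi\colon (B_1)_{[a,b]} \to F(Z^\reg)_{[a,b]}$ by inserting the coevaluation $R_V\colon \mathbf{1}_{i_0} \to V \otimes V^*$ at the cut: the $V$-component of $\Phi(f \otimes g)$ is $(f \otimes \id_{VV^*}) \circ (\id_Y \otimes R_V \otimes \id_Z) \circ g$. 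The snake identities show $\Phi$ descends to the tensor product over $\End(Y) \otimes \End(Z)$, since an element $\alpha \otimes \beta \in \End(Y) \otimes \End(Z)$ slides freely through $R_V$; bimodularity over $A_{[a,b]}$ is immediate from the formula.

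To see $\Phi$ is an isomorphism, I would perform a dimension count: the index $[A_{[a,b]} : \End(Y) \otimes \End(Z)]$ computed via categorical traces matches the total categorical dimension $\sum_{j,\, V \in \Irr(\cC_{i_0, j})} d(V)^2$ of $Z^\reg_{i_0}$, and Frobenius reciprocity rewrites each summand $\Hom(Y \otimes Z, Y \otimes V V^* \otimes Z)$ in a way that makes the comparison with the standard basic-construction dimensions transparent. Compatibility of $\Phi$ with the bonding maps $\eta_{I, J}$ defining $F(Z^\reg)$ and with the corresponding inclusions of $B_1$ under interval enlargement follows from the pseudo-naturality of $c^\reg$ with respect to the additional 1-cells $X_{a-1}$ and $X_{b+1}$ on either side of the cut. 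The main technical obstacle is the dimension-matching identity, which reduces to a standard calculation in the rigid 2-category $\cC$; once established, passing to the inductive limit of $\Phi$ yields the required $A$-bimodule isomorphism $B_1 \simeq F(Z^\reg)$, and hence that $B_1$ is a DHR-bimodule.
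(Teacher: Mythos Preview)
Your map $\Phi$ is not well-defined as written: with $f,g\in\End(Y\otimes Z)$ and $V\in\Irr(\cC_{i_0,j})$, the composite $(f\otimes\id_{VV^*})\circ(\id_Y\otimes R_V\otimes\id_Z)\circ g$ does not typecheck, since $f\otimes\id_{VV^*}$ is a morphism $Y\otimes Z\otimes VV^*\to Y\otimes Z\otimes VV^*$, whereas the preceding map lands in $Y\otimes VV^*\otimes Z$. Any repair---say, conjugating by the half-braiding $c^\reg_Z$ to slide $VV^*$ past $Z$---turns both your descent claim (``$\alpha\otimes\beta$ slides freely through $R_V$'') and your bimodularity claim into statements about $c^\reg$ rather than formal consequences of the snake relations. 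In particular, the sentence ``bimodularity over $A_{[a,b]}$ is immediate from the formula'' is exactly where the content lies.

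The paper's argument confronts this directly. It first decomposes $A$ as a $B_0$-bimodule,
\[
A\;\simeq\;\bigoplus_{U\in\Irr\cC_{i_0}}\varinjlim\,\Hom(X_{[-k,-1]}\otimes U,X_{[-k,-1]})\otimes\Hom(X_{[0,k]},U\otimes X_{[0,k]}),
\]
so that $B_1=A\boxtimes_{B_0}A$ becomes a sum over pairs $U,U'\in\Irr\cC_{i_0}$, and Frobenius reciprocity then identifies this with $F(Z^\reg)$ as $B_0$-bimodules without any dimension count. The right $A$-action matches because the product of $A$ is the coend multiplication on $\bigoplus_U U^*\boxtimes U$. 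For the \emph{left} $A$-action, however, the paper introduces a second model $F'(Z)=\varinjlim\Hom(X_{[-k,-1]}\otimes Z^*\otimes X_{[0,k]},X_{[-k,k]})$, related to $F$ by the half-braiding and Frobenius reciprocity, and then checks via an explicit string-diagram computation that the three comparison maps $B_1\to F(Z^\reg)$, $B_1\to F'(Z^\reg)$, and $F(Z^\reg)\simeq F'(Z^\reg)$ are mutually compatible. This is the substitute for the step you have declared immediate.

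A secondary issue: your description of $Z^\reg_{i_0}$ as $\bigoplus_{j\in S,\,V\in\Irr(\cC_{i_0,j})}V\otimes V^*$ does not agree with the regular object used in the paper, which sums only over $U\in\Irr(\cC_{i_0,i_0})$. When $|S|>1$ your object is strictly larger, and the proposed dimension match with the index of $\End(Y)\otimes\End(Z)\subset\End(Y\otimes Z)$ will fail by a factor of $|S|$.
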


\begin{proof}
We first recall that $A$ can be decomposed as
\[
\bigoplus_{Y \in \Irr \cC_0} \varinjlim \Hom(X_{[-k, -1]} \otimes Y, X_{[-k, -1]}) \otimes \Hom(X_{[0, k]}, Y \otimes X_{[0, k]})
\]
as a $B_0$-bimodule, with comparison maps
\begin{equation}\label{eq:A-as-B-0-bimod-map}
\Hom(X_{[-k, -1]} \otimes Y, X_{[-k, -1]}) \otimes \Hom(X_{[0, k]}, Y \otimes X_{[0, k]}) \to \End(X_{[-k, k]})
\end{equation}
given by $f \otimes g \mapsto (f \otimes \id) (\id \otimes g)$.
The algebra structure of $A$ corresponds to the coend structure of $\bigoplus_Y Y^* \boxtimes Y \in \cC_0 \boxtimes \cC_0$.

Then, $A \otimes_{B_0} A$ can be written as
\begin{equation}\label{eq:B-1-decomp-1}
\bigoplus_{Y, W \in \Irr \cC_0} \varinjlim \Hom(X_{[-k, -1]} \otimes W \otimes Y, X_{[-k, -1]}) \otimes \Hom(X_{[0, k]}, W \otimes Y \otimes X_{[0, k]}).
\end{equation}
By the Frobenius reciprocity this is isomorphic to
\begin{equation*}
\bigoplus_{Y, W \in \Irr \cC_0} \varinjlim \Hom(X_{[-k, -1]} \otimes W, X_{[-k, -1]} \otimes Y^*) \otimes \Hom(X_{[0, k]}, W \otimes Y \otimes X_{[0, k]}),
\end{equation*}
which maps to $F(Z^\reg)$ by a $B_0$-bimodule isomorphism analogous to~\eqref{eq:A-as-B-0-bimod-map}.

It remains to check that this is an homomorphism of $A$-bimodules.
The coend description of the product of $A$ implies that the right $A$-module structures are compatible.

To compare the left $A$-module structures, we consider another functor $F' \colon \cZ(\cC) \to \DHR(A_\bullet)$ given by
\[
F'(Z) = \varinjlim \Hom_\cC(X_{[-k, -1]} \otimes Z^* \otimes X_{[0, k]}, X_{[-k, k]}).
\]
Then the half-braiding and the Frobenius reciprocity gives isomorphisms
\[
\Hom_\cC(X_{[-k, k]}, X_{[-k, -1]} \otimes Z \otimes X_{[0, k]}) \to \Hom_\cC(X_{[-k, -1]} \otimes Z^* \otimes X_{[0, k]}, X_{[-k, k]}),
\]
inducing a natural isomorphism $F \simeq F'$.

For $Z = Z^\reg$, the space~\eqref{eq:B-1-decomp-1} maps to $F'(Z)$ via
\[
\bigoplus_{Y, W \in \Irr \cC_0} \varinjlim \Hom(X_{[-k, -1]} \otimes W \otimes Y, X_{[-k, -1]}) \otimes \Hom(Y^{\prime *} \otimes X_{[0, k]}, Y \otimes X_{[0, k]}).
\]
The compatibility with left $A$-module structures follows from the compatibility of isomorphisms $B_1 \simeq F(Z^\reg)$, $B_1 \simeq F'(Z^\reg)$, and $F(Z^\reg) \simeq F'(Z^\reg)$ we have so far.

Let us take elements
\[
a \in \Hom(X_{[-k, -1]} \otimes W \otimes Y, X_{[-k, -1]}), \quad
b \in \Hom(X_{[0, k]}, W \otimes Y \otimes X_{[0, k]}).
\]
Under the maps $B_1 \to F(Z^\reg)$ and $B_1 \to F'(Z^\reg)$, the element of $B_1$ represented by $a \otimes b$ in the decomposition~\eqref{eq:B-1-decomp-1} goes to
\begin{equation}\label{eq:img-of-a-b-1}
(a \otimes \id) (\id \otimes \bar{R}_Y \otimes \id) (\id \otimes b) \in \Hom(X_{[-k, k]}, X_{[-k, -1]} \otimes Y^* \otimes Y \otimes X_{[0, k]}) \subset F(Z^\reg)
\end{equation}
and
\begin{equation}\label{eq:img-of-a-b-2}
(a \otimes \id) (\id \otimes R_{W}^* \otimes \id) (\id \otimes b) \in \Hom(X_{[-k, -1]} \otimes W \otimes Y^{\prime *} \otimes X_{[0, k]}, X_{[-k, k]}) \subset F'(Z^\reg).
\end{equation}
Expanding the isomorphism $F(Z^\reg) \simeq F'(Z^\reg)$, the element~\eqref{eq:img-of-a-b-1} goes to the element represented by Figure~\ref{fig:hlf-br-on-a-b}, where $\alpha$ and $\beta$ are labels for a choice of mutually orthogonal isometries $M \to Y \otimes X_{[0, k]}$ and $V \to X_{[0, k]} \otimes M^*$, where $M$ runs over the irreducible classes of $\cC_{0, k+1}$, and the superscript $\vee$ represents the transpose with respect to duality.
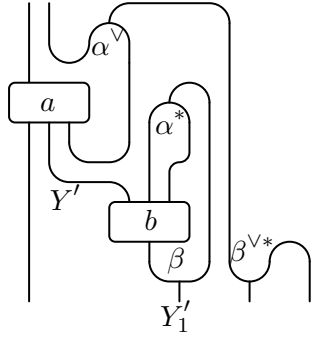
\begin{figure}[h]
\def \globalscale {1.000000}
\begin{tikzpicture}[y=1pt, x=1pt, yscale=\globalscale,xscale=\globalscale, every node/.append style={scale=\globalscale}, inner sep=0pt, outer sep=0pt]
  \path[fill=white] (9.0, 163.0) rectangle (182.0, -9.0);
  \path[draw=black,line cap=round,line join=round,line width=1.0pt] (20.0, 
  112.0) -- (20.0, 162.0);
  \path[draw=black,line cap=round,line join=round,line width=1.0pt] (55.0, 
  152.0) -- (55.0, 152.0).. controls (55.0, 157.523) and (59.477, 162.0) .. 
  (65.0, 162.0) -- (120.0, 162.0).. controls (125.523, 162.0) and (130.0, 
  157.523) .. (130.0, 152.0) -- (130.0, 142.0);
  \path[fill=white] (63.0, 52.0) -- (97.0, 52.0).. controls (98.657, 52.0) and 
  (100.0, 50.657) .. (100.0, 49.0) -- (100.0, 35.0).. controls (100.0, 33.343) 
  and (98.657, 32.0) .. (97.0, 32.0) -- (63.0, 32.0).. controls (61.343, 32.0) 
  and (60.0, 33.343) .. (60.0, 35.0) -- (60.0, 49.0).. controls (60.0, 50.657) 
  and (61.343, 52.0) .. (63.0, 52.0) -- cycle;
  \path[draw=black,line cap=round,line join=round,line width=1.0pt] (63.0, 52.0)
   -- (97.0, 52.0).. controls (98.657, 52.0) and (100.0, 50.657) .. (100.0, 
  49.0) -- (100.0, 35.0).. controls (100.0, 33.343) and (98.657, 32.0) .. (97.0,
   32.0) -- (63.0, 32.0).. controls (61.343, 32.0) and (60.0, 33.343) .. (60.0, 
  35.0) -- (60.0, 49.0).. controls (60.0, 50.657) and (61.343, 52.0) .. (63.0, 
  52.0) -- cycle;
  \path[fill=white] (13.0, 112.0) -- (47.0, 112.0).. controls (48.657, 112.0) 
  and (50.0, 110.657) .. (50.0, 109.0) -- (50.0, 95.0).. controls (50.0, 93.343)
   and (48.657, 92.0) .. (47.0, 92.0) -- (13.0, 92.0).. controls (11.343, 92.0) 
  and (10.0, 93.343) .. (10.0, 95.0) -- (10.0, 109.0).. controls (10.0, 110.657)
   and (11.343, 112.0) .. (13.0, 112.0) -- cycle;
  \path[draw=black,line cap=round,line join=round,line width=1.0pt] (13.0, 
  112.0) -- (47.0, 112.0).. controls (48.657, 112.0) and (50.0, 110.657) .. 
  (50.0, 109.0) -- (50.0, 95.0).. controls (50.0, 93.343) and (48.657, 92.0) .. 
  (47.0, 92.0) -- (13.0, 92.0).. controls (11.343, 92.0) and (10.0, 93.343) .. 
  (10.0, 95.0) -- (10.0, 109.0).. controls (10.0, 110.657) and (11.343, 112.0) 
  .. (13.0, 112.0) -- cycle;
  \path[draw=black,line cap=round,line join=round,line width=1.0pt] (40.0, 82.0)
   -- (40.0, 92.0);
  \path[draw=black,line cap=round,line join=round,line width=1.0pt] (40.0, 82.0)
   -- (40.0, 82.0).. controls (40.0, 76.477) and (44.477, 72.0) .. (50.0, 72.0) 
  -- (60.0, 72.0).. controls (65.523, 72.0) and (70.0, 76.477) .. (70.0, 82.0) 
  -- (70.0, 82.0);
  \path[draw=black,line cap=round,line join=round,line width=1.0pt] (30.0, 92.0)
   -- (30.0, 72.0).. controls (30.0, 66.477) and (34.477, 62.0) .. (40.0, 62.0) 
  -- (60.0, 62.0).. controls (65.523, 62.0) and (70.0, 57.523) .. (70.0, 52.0) 
  -- (70.0, 52.0);
  \path[draw=black,line cap=round,line join=round,line width=1.0pt] (20.0, 92.0)
   -- (20.0, -8.0);
  \path[draw=black,line cap=round,line join=round,line width=1.0pt] (120.0, 
  132.0) -- (120.0, 32.0).. controls (120.0, 26.477) and (115.523, 22.0) .. 
  (110.0, 22.0) -- (110.0, 22.0);
  \path[draw=black,line cap=round,line join=round,line width=1.0pt] (130.0, 
  60.817) -- (130.0, 39.5).. controls (130.0, 35.358) and (133.358, 32.0) .. 
  (137.5, 32.0) -- (137.5, 32.0).. controls (141.642, 32.0) and (145.0, 28.642) 
  .. (145.0, 24.5) -- (145.0, 17.0).. controls (145.0, 14.239) and (147.239, 
  12.0) .. (150.0, 12.0) -- (150.0, 12.0).. controls (152.761, 12.0) and (155.0,
   14.239) .. (155.0, 17.0) -- (155.0, 22.0).. controls (155.0, 27.523) and 
  (159.477, 32.0) .. (165.0, 32.0) -- (170.0, 32.0);
  \path[draw=black,line cap=round,line join=round,line width=1.0pt] (150.0, 
  12.0) -- (150.0, -8.0);
  \path[draw=black,line cap=round,line join=round,line width=1.0pt] (130.0, 
  142.0) -- (130.0, 52.0);
  \path[draw=black,line cap=round,line join=round,line width=1.0pt] (170.0, 
  32.0) -- (170.0, 32.0).. controls (175.523, 32.0) and (180.0, 27.523) .. 
  (180.0, 22.0).. controls (180.0, 22.0) and (180.0, 22.0) .. (180.0, 22.0) -- 
  (180.0, 22.0);
  \path[draw=black,line cap=round,line join=round,line width=1.0pt] (180.0, 
  22.0) -- (180.0, -8.0);
  \path[fill=white] (143.0, 17.0) -- (157.0, 17.0).. controls (158.657, 17.0) 
  and (160.0, 15.657) .. (160.0, 14.0) -- (160.0, 0.0).. controls (160.0, 
  -1.657) and (158.657, -3.0) .. (157.0, -3.0) -- (143.0, -3.0).. controls 
  (141.343, -3.0) and (140.0, -1.657) .. (140.0, 0.0) -- (140.0, 14.0).. 
  controls (140.0, 15.657) and (141.343, 17.0) .. (143.0, 17.0) -- cycle;
  \path[draw=black,line cap=round,line join=round,line width=1.0pt] (143.0, 
  17.0) -- (157.0, 17.0).. controls (158.657, 17.0) and (160.0, 15.657) .. 
  (160.0, 14.0) -- (160.0, 0.0).. controls (160.0, -1.657) and (158.657, -3.0) 
  .. (157.0, -3.0) -- (143.0, -3.0).. controls (141.343, -3.0) and (140.0, 
  -1.657) .. (140.0, 0.0) -- (140.0, 14.0).. controls (140.0, 15.657) and 
  (141.343, 17.0) .. (143.0, 17.0) -- cycle;
  \path[draw=black,line cap=round,line join=round,line width=1.0pt] (90.0, 22.0)
   -- (90.0, 22.0).. controls (92.761, 22.0) and (95.0, 19.761) .. (95.0, 17.0) 
  -- (95.0, 17.0).. controls (95.0, 14.239) and (97.239, 12.0) .. (100.0, 12.0) 
  -- (100.0, 12.0).. controls (102.761, 12.0) and (105.0, 14.239) .. (105.0, 
  17.0) -- (105.0, 17.0).. controls (105.0, 19.761) and (107.239, 22.0) .. 
  (110.0, 22.0).. controls (110.0, 22.0) and (110.0, 22.0) .. (110.0, 22.0) -- 
  (110.0, 22.0);
  \path[draw=black,line cap=round,line join=round,line width=1.0pt] (100.0, 
  12.0) -- (100.0, -8.0);
  \path[fill=white] (93.0, 17.0) -- (107.0, 17.0).. controls (108.657, 17.0) and
   (110.0, 15.657) .. (110.0, 14.0) -- (110.0, 0.0).. controls (110.0, -1.657) 
  and (108.657, -3.0) .. (107.0, -3.0) -- (93.0, -3.0).. controls (91.343, -3.0)
   and (90.0, -1.657) .. (90.0, 0.0) -- (90.0, 14.0).. controls (90.0, 15.657) 
  and (91.343, 17.0) .. (93.0, 17.0) -- cycle;
  \path[draw=black,line cap=round,line join=round,line width=1.0pt] (93.0, 17.0)
   -- (107.0, 17.0).. controls (108.657, 17.0) and (110.0, 15.657) .. (110.0, 
  14.0) -- (110.0, 0.0).. controls (110.0, -1.657) and (108.657, -3.0) .. 
  (107.0, -3.0) -- (93.0, -3.0).. controls (91.343, -3.0) and (90.0, -1.657) .. 
  (90.0, 0.0) -- (90.0, 14.0).. controls (90.0, 15.657) and (91.343, 17.0) .. 
  (93.0, 17.0) -- cycle;
  \path[draw=black,line cap=round,line join=round,line width=1.0pt] (85.0, 
  102.0) -- (85.0, 102.0).. controls (87.761, 102.0) and (90.0, 104.239) .. 
  (90.0, 107.0) -- (90.0, 107.0).. controls (90.0, 109.761) and (92.239, 112.0) 
  .. (95.0, 112.0) -- (95.0, 112.0).. controls (97.761, 112.0) and (100.0, 
  109.761) .. (100.0, 107.0) -- (100.0, 107.0).. controls (100.0, 104.239) and 
  (102.239, 102.0) .. (105.0, 102.0).. controls (105.0, 102.0) and (105.0, 
  102.0) .. (105.0, 102.0) -- (105.0, 102.0);
  \path[draw=black,line cap=round,line join=round,line width=1.0pt] (95.0, 
  132.0) -- (95.0, 112.0);
  \path[draw=black,line cap=round,line join=round,line width=1.0pt] (80.0, 32.0)
   -- (80.0, 32.0).. controls (80.0, 26.477) and (84.477, 22.0) .. (90.0, 22.0) 
  -- (90.0, 22.0);
  \path[fill=white] (88.0, 127.0) -- (102.0, 127.0).. controls (103.657, 127.0) 
  and (105.0, 125.657) .. (105.0, 124.0) -- (105.0, 110.0).. controls (105.0, 
  108.343) and (103.657, 107.0) .. (102.0, 107.0) -- (88.0, 107.0).. controls 
  (86.343, 107.0) and (85.0, 108.343) .. (85.0, 110.0) -- (85.0, 124.0).. 
  controls (85.0, 125.657) and (86.343, 127.0) .. (88.0, 127.0) -- cycle;
  \path[draw=black,line cap=round,line join=round,line width=1.0pt] (88.0, 
  127.0) -- (102.0, 127.0).. controls (103.657, 127.0) and (105.0, 125.657) .. 
  (105.0, 124.0) -- (105.0, 110.0).. controls (105.0, 108.343) and (103.657, 
  107.0) .. (102.0, 107.0) -- (88.0, 107.0).. controls (86.343, 107.0) and 
  (85.0, 108.343) .. (85.0, 110.0) -- (85.0, 124.0).. controls (85.0, 125.657) 
  and (86.343, 127.0) .. (88.0, 127.0) -- cycle;
  \path[draw=black,line cap=round,line join=round,line width=1.0pt] (40.0, 
  122.0) -- (45.0, 122.0).. controls (47.761, 122.0) and (50.0, 124.239) .. 
  (50.0, 127.0) -- (50.0, 127.0).. controls (50.0, 129.761) and (52.239, 132.0) 
  .. (55.0, 132.0) -- (55.0, 132.0).. controls (57.761, 132.0) and (60.0, 
  129.761) .. (60.0, 127.0) -- (60.0, 127.0).. controls (60.0, 124.239) and 
  (62.239, 122.0) .. (65.0, 122.0).. controls (65.0, 122.0) and (65.0, 122.0) ..
   (65.0, 122.0) -- (65.0, 122.0);
  \path[draw=black,line cap=round,line join=round,line width=1.0pt] (55.0, 
  152.0) -- (55.0, 132.0);
  \path[fill=white] (48.0, 147.0) -- (62.0, 147.0).. controls (63.657, 147.0) 
  and (65.0, 145.657) .. (65.0, 144.0) -- (65.0, 130.0).. controls (65.0, 
  128.343) and (63.657, 127.0) .. (62.0, 127.0) -- (48.0, 127.0).. controls 
  (46.343, 127.0) and (45.0, 128.343) .. (45.0, 130.0) -- (45.0, 144.0).. 
  controls (45.0, 145.657) and (46.343, 147.0) .. (48.0, 147.0) -- cycle;
  \path[draw=black,line cap=round,line join=round,line width=1.0pt] (48.0, 
  147.0) -- (62.0, 147.0).. controls (63.657, 147.0) and (65.0, 145.657) .. 
  (65.0, 144.0) -- (65.0, 130.0).. controls (65.0, 128.343) and (63.657, 127.0) 
  .. (62.0, 127.0) -- (48.0, 127.0).. controls (46.343, 127.0) and (45.0, 
  128.343) .. (45.0, 130.0) -- (45.0, 144.0).. controls (45.0, 145.657) and 
  (46.343, 147.0) .. (48.0, 147.0) -- cycle;
  \path[draw=black,line cap=round,line join=round,line width=1.0pt] (35.0, 
  162.0) -- (35.0, 127.0).. controls (35.0, 124.239) and (37.239, 122.0) .. 
  (40.0, 122.0) -- (40.0, 122.0);
  \path[draw=black,line cap=round,line join=round,line width=1.0pt] (70.0, 82.0)
   -- (70.0, 117.0).. controls (70.0, 119.761) and (67.761, 122.0) .. (65.0, 
  122.0) -- (65.0, 122.0);
  \path[draw=black,line cap=round,line join=round,line width=1.0pt] (80.0, 52.0)
   -- (80.0, 97.0).. controls (80.0, 99.761) and (82.239, 102.0) .. (85.0, 
  102.0) -- (85.0, 102.0);
  \path[draw=black,line cap=round,line join=round,line width=1.0pt] (90.0, 52.0)
   -- (90.0, 52.0).. controls (90.0, 57.523) and (94.477, 62.0) .. (100.0, 62.0)
   -- (100.0, 62.0).. controls (105.523, 62.0) and (110.0, 66.477) .. (110.0, 
  72.0) -- (110.0, 97.0).. controls (110.0, 99.761) and (107.761, 102.0) .. 
  (105.0, 102.0) -- (105.0, 102.0);
  \path[draw=black,line cap=round,line join=round,line width=1.0pt] (120.0, 
  132.0) -- (120.0, 132.0).. controls (120.0, 137.523) and (115.523, 142.0) .. 
  (110.0, 142.0) -- (105.0, 142.0).. controls (99.477, 142.0) and (95.0, 
  137.523) .. (95.0, 132.0) -- (95.0, 132.0);
\node[anchor=south west,line width=0.64pt] (text24) at (28.0, 98.0){$a$};
\node[anchor=south west,line width=0.64pt] (text25) at (78.0, 38.0){$b$};
\node[anchor=south west,line width=0.64pt] (text26) at (96.0, 2.0){$\beta$};
\node[anchor=south west,line width=0.64pt] (text27) at (142.0, 2.0){$\beta^{\vee *}$};
\node[anchor=south west,line width=0.64pt] (text28) at (90.0, 112.0){$\alpha^*$};
\node[anchor=south west,line width=0.64pt] (text29) at (50.0, 132.0){$\alpha^\vee$};
\end{tikzpicture}
\caption{Effect of $F(Z^\reg) \to F'(Z^\reg)$ on the image of $a \otimes b$}\label{fig:hlf-br-on-a-b}
\end{figure}

The part involving $b$, $\alpha^*$, and $\beta$ represets a morphism from $V$ to $W$.
Then the irreducibility of $W$ and $V$ forces $V = W$, and this part can be interpreted as the pairing of $(\alpha \otimes \id) (\id \otimes \beta^\vee) (R_M \otimes \id)$ and $b$.
Then a standard computation shows that the overall diagram is indeed equal to the element~\eqref{eq:img-of-a-b-2}.
\end{proof}

\subsection{Beyond the fusion case}

Once we remove the fusion condition on $\cC$, we can have models violating the charge transporter generation property, or local alignment.

\begin{example}
Suppose $\cC$ is the representation category of a profinite group $G = \varprojlim_n K_n$ for an inverse system of finite groups $(K_n)_{n \in \bN}$.
Then $\cC$ is a union of the fusion categories $\Rep K_n$.
With a suitable choice of objects $(X_n)_{n \in \bZ}$ generating $\cC$, we obtain an abstract spin chain $A_\bullet$ such that $\DHR(A_\bullet)$ contains the symmetric tensor category $\cC$.
\end{example}

Coming back to the setting of a rigid C$^*$-$2$-category $\cC$ with finitely many $0$-cells, let us fix a composable sequence of $1$-cells $(X_n)_{n \in \bZ}$, and take the abstract spin chain $A_\bullet$.
We assume that any object $Y$ is contained in $X_{[-n, -1]}^* \otimes X_{[-n, -1]}$ for big enough $n$.
This guarantees that
\[
E_Y = \varinjlim \Hom(X_{[-n, -1]}, X_{[-n, -1]} \otimes Y)
\]
is a nontrivial $A_-$-bimodule.

\begin{proposition}
Let $E$ be an object of $\DHR(A_\bullet)$.
Then the $A_-$-bimodule $E_-$ is isomorphic to $E_Y$ for some $Y \in \cC_0$.
\end{proposition}

\begin{proof}
Consider the $A_-$-bimodule $E^{(n)}$ defined as
\[
E^{(n)} = \varinjlim_m \End(X_{[-m, -1]}) \otimes_{\End(X_{[-m, -n]})} \End(X_{[-m, -1]}).
\]
We claim that, for big enough $n$, there is a surjective $A_-$-bimodule map from a direct sum of finitely many copies of $E^{(n)}$ to $E$.

Indeed, let $(\xi_i)_{i = 1}^d$ be a basis of $E$ localized on an interval of the form $[-n, -1]$.
Then, for each $i$, we have a bimodule map from $E^{(n)}$ to $E_-$ given by $a \otimes b \mapsto a \xi_i b$.

Next, by a usual Frobenius reciprocity argument, the bimodule $E^{(n)}$ can be identified with
\[
A_{X_{[-n+1,-1]}^*} = \varinjlim_m \End(X_{[-m, -1]} \otimes X_{[-n+1,-1]}^*) \cong E_{X_{[-n+1,-1]}^* \otimes X_{[-n+1,-1]}}.
\]
Consequently the $A_-$-endomorphisms of $E^{(n)}$ can be identified with the elements of $A' \cap A_{X_{[-n+1,-1]}^*}$.
By Proposition~\ref{prop:LR-recog}, the latter is the natural image of $\End(X_{[-n+1,-1]}^*)$.

This means that $E_-$ is a direct summand of $E_{X_{[-n+1,-1]}^* \otimes X_{[-n+1,-1]}}$ with respect to a projection in $\End(X_{[-n+1,-1]}^*)$ naturally acting on $X_{[-n+1,-1]}^* \otimes X_{[-n+1,-1]}$, hence we obtain the claim.
\end{proof}

Given $W \in \cC_{i j}$, let us write $\supp W$ for the subset of $\Irr \cC_{i j}$ consisting of the classes $V$ such that $V$ is a subobject of $W$.

\begin{proposition}
Given $E \in \DHR(A_\bullet)$, suppose $Y \in \cC_0$ is an object satisfying the above condition.
We then have $\supp X_{[-m, -1]}  = \supp X_{[-m, -1]} \otimes Y$ for big enough $m$.
\end{proposition}

\begin{proof}
Suppose $m$ is big enough so that we have a localized basis $(\xi_i)_i$ of $E$ on $[-m, -1]$.
We claim that $\supp X_{[-m, -1]}  = \supp X_{[-m, -1]} \otimes Y$ should hold for such $m$.

Suppose on the contrary that there is some class $V$ in $\supp X_{[-m, -1]} \otimes Y$ not contained in $\supp X_{[-m, -1]}$, and let $z$ be the central projection of $\End(X_{[-m, -1]} \otimes Y)$ corresponding to $V$.

Take a big enough $k$ such that there is a class $W$ in $\supp X_{[-k, -m-1]}$ such that $W \otimes V$ has nontrivial morphism to $X_{[-k, -1]}$.
Then, if $w$ is the central projection of $\End(X_{[-k, -m-1]})$ corresponding to $W$, the projection $w \otimes z$ acts nontrivially on $\Hom(X_{[-k, -1]}, X_{[-k, -1]} \otimes Y)$.

Since $(\xi_i)_i$ form a projective basis of $\Hom(X_{[-k, -1]}, X_{[-k, -1]} \otimes Y)$ over $\End(X_{[-k, -1]})$, the vectors of the form $\xi_i a$ for $1 \le i \le d$ and $a \in \End(X_{[-k, -1]})$ are total in $\Hom(X_{[-k, -1]}, X_{[-k, -1]} \otimes Y)$, and $w \otimes p$ acts nontrivially on the span of $\xi_i a \xi_j^*$ for $a \in \End(X_{[-k, -1]})$.
However, applying the partial trace we reduce this to the action of $p$ on the span of $\xi_i a \xi_j^*$ for $a \in \End(X_{[-m, -1]})$, which should be trivial.
The faithfulness of partial trace contradicts this.
\end{proof}

The above criterion can pose a strong control on the DHR bimodules when the fusion rules of $\cC$ are `torsion free' in a suitable sense.

For example, suppose that $\cC$ is a rigid C$^*$-tensor category with the fusion rules of a simple compact Lie group $G$.
Then the only possibility for the above $Y$ is a multiple of the trivial object.
Thus, as a left module, any object $E \in \DHR(A_\bullet)$ is a free $A$-module.
In view of Remark~\ref{rem:hlf-br-on-triv-by-grading}, as a monoidal category $\DHR(A_\bullet)$ is equivalent to the representation category of the commutative group $Z(G)$.
In this case the charge transporters from positive region to negative region belong to the subalgebra $B_0$, and we have $B_0 = C$, hence failure of the charge transporter generation.

\appendix

\section{Duality}\label{sec:duality}

Let $A_{\bullet}$ be an abstract spin chain. We next want to address the question of dualizability of DHR bimodules. 

\begin{condition}\label{cond:dualizability-as-corr}
Each DHR bimodule $X$ over $A_\bullet$ is dualizable as a correspondence over $A$.
\end{condition}

Recall that an $A$-$A$-correspondence $X$ is said to be \emph{a bi-Hilbertian $A$-$A$ C$^*$-bimodule}~\cite{MR2085108} if there is a left Hilbert $A$-module structure on $X$, and the topologies induced on $X$ by the left and right $A$-valued inner products are equivalent.

\begin{proposition}
Suppose Condition~\ref{cond:simplicity}.
If each DHR bimodule $X$ has a bi-Hilbertian $A$-$A$ C$^*$-bimodule structure, we have Condition~\ref{cond:dualizability-as-corr}.
\end{proposition}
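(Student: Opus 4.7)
The strategy is to apply the theory of bi-Hilbertian bimodules developed by Kajiwara--Pinzari--Watatani~\cite{MR2085108} directly, in the spirit of the argument used for Lemma~\ref{lem:r-ind-X-d-X}. Let $X$ be a DHR bimodule. By Definition~\ref{defn:DHRbimod} it admits a finite right projective basis $(\xi_i)_{i=1}^n$, and by hypothesis it is equipped with a bi-Hilbertian structure: a left $A$-valued inner product whose induced topology is equivalent to the one coming from the right inner product.

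The existence of the finite right projective basis, combined with the unitality of $A$, allows us to apply~\cite{MR2085108}*{Corollary 2.25} and conclude that the right index element $\rInd(X)$ lies in $A$ itself. Together with Condition~\ref{cond:simplicity}, which makes $A$ a unital simple C$^*$-algebra (in particular with trivial center), the same reasoning as in Lemma~\ref{lem:r-ind-X-d-X} via~\cite{MR2085108}*{Corollary 2.26} forces $\rInd(X) = d \cdot 1_A$ to be a scalar, so in particular $\rI(X) = d < \infty$.

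The bi-Hilbertian hypothesis is symmetric in the two sides, so the KPW decomposition theory yields a finite left projective basis for $X$, and the same argument applied on the left gives $\mathrm{l}\mhyph\mathrm{Ind}(X) = d' \cdot 1_A$ for another positive scalar $d'$. One can then invoke~\cite{MR2085108}*{Theorem 4.4} to produce a conjugate bimodule $\overline{X}$, together with morphisms $R \colon A \to \overline{X} \boxtimes_A X$ and $\overline{R} \colon A \to X \boxtimes_A \overline{X}$ satisfying $R^* R = d' \cdot 1_A$, $\overline{R}^* \overline{R} = d \cdot 1_A$, and the conjugate equations. This exhibits $\overline{X}$ as the categorical dual of $X$ in $\Corr(A)$, establishing Condition~\ref{cond:dualizability-as-corr}.

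The main technical obstacle lies in going from the given finite right projective basis to a finite \emph{left} projective basis, since the bi-Hilbertian hypothesis a priori only guarantees topological equivalence of the two inner products rather than algebraic finiteness on the left. This is precisely the issue handled by the KPW framework: when the right numerical index is finite and the two topologies on $X$ are equivalent, a finite left basis can be extracted, after which the formulas producing the conjugate equations become available and the claim follows formally.
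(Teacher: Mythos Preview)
Your overall strategy---invoke the Kajiwara--Pinzari--Watatani theory---is the same as the paper's, but the logical flow is misaligned and the key citation is missing.

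The first two paragraphs establish that $\rInd(X)$ is a scalar, which is true but does no work: having a finite right projective basis already gives $\rI(X)<\infty$ immediately, so you have not used Condition~\ref{cond:simplicity} for anything yet. You then assert that ``the bi-Hilbertian hypothesis is symmetric in the two sides, so the KPW decomposition theory yields a finite left projective basis.'' This is the gap. The hypothesis is \emph{not} symmetric in the relevant sense: we begin with a finite right basis and only topological equivalence on the left, and your own final paragraph concedes this. What is missing is the specific bridge,~\cite{MR2085108}*{Theorem~4.8}, which converts finite right numerical index into finite \emph{left} numerical index for bi-Hilbertian bimodules. Only after that does simplicity enter: one applies~\cite{MR2085108}*{Corollary~2.26} on the left side to pass from finite left numerical index to finite left index, and then~\cite{MR2085108}*{Corollary~2.25} (unitality) to extract a finite left projective basis. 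This is exactly the chain the paper runs, and it is where Condition~\ref{cond:simplicity} is actually needed---not in computing $\rInd(X)$ as a scalar.

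Once a finite left basis is obtained, dualizability is immediate; your appeal to~\cite{MR2085108}*{Theorem~4.4} for explicit conjugate equations is correct but more than the statement requires.
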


\begin{proof}
Recall the notion of $X$ having a \emph{finite left numerical index}~\cite{MR2085108}: there is some constant $\lambda > 0$ such that
\[
\biggl\| \sum_i \braket{ \xi_i | \xi_i } \biggr\| \le \lambda \biggl\| \sum_i \xi_i^* \xi_i \biggr\|
\]
holds for any finite sequence $(\xi_i)_i \subset X$, where $\xi^* \eta$ denotes the operator $\zeta \mapsto \prescript{}{A}(\zeta, \xi) \eta$ on $\prescript{}{A} X$.
By~\cite{MR2085108}*{Theorem 4.8} we obtain this property from knowing that $X_A$ has a projective basis (hence $X$ is of finite \emph{right} numerical index).
Now,~\cite{MR2085108}*{Corollary 2.26} says that $X$ is of \emph{finite left index}, meaning that the series $\sum_j \braket{ \eta_j | \eta_j }$ converges in $\cM(A) = A$ for the strict topology, whenever $(\eta_j)_j$ is a generalized left basis of $X$.
By~\cite{MR2085108}*{Corollary 2.25}, we obtain a projective basis for $\prescript{}{A} X$.
\end{proof}

Now, for each finite subset $S \subset \bZ$, consider the projection
\[
e_S = \sum_i \xi_i^* \xi_i \in \cK(\prescript{}{A} X),
\]
where $(\xi_i)_i$ is any choice of projective basis for $\prescript{}{A_S} X_S$.
(In other words, it is the image of $1_{\cK(\prescript{}{A_S} X_S)} \in \cK(\prescript{}{A_S} X_S)$ under the not-necessarily unital embedding into $\cK(\prescript{}{A} X)$.)

\begin{proposition}\label{prop:big-S-has-left-basis}
Under the above assumptions, we have $e_S = 1$ for big enough $S$.
\end{proposition}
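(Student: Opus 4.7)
My plan is to show that $(e_S)_S$, indexed by finite subsets $S\subset \bZ$ under inclusion, is a monotone increasing net of projections in the \emph{unital} C$^*$-algebra $\cK({}_A X) \simeq \End_A({}_A X)$ that converges to $1$ in operator norm. Once this is in place the conclusion is immediate: each $1 - e_S$ is itself a projection, so $\|1-e_S\| \in \{0,1\}$, and any bound $\|1-e_S\|<1$ forces $e_S = 1$. The unitality of $\cK({}_A X)$ is provided by the preceding proposition, which under the bi-Hilbertian hypothesis (and Condition~\ref{cond:simplicity}) produces a finite left projective basis for ${}_A X$, so that $X$ is finitely generated projective as a left $A$-module.

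The first routine step is to identify $e_S$ with the image of $1_{\cK({}_{A_S} X_S)}$ under the canonical (generally non-unital) inclusion $\cK({}_{A_S} X_S) \hookrightarrow \cK({}_A X)$ induced by the embedding of left Hilbert modules $X_S \hookrightarrow X$. This shows that $e_S$ is a well-defined projection independent of the choice of basis, analogously to the argument in Proposition~\ref{prop:X-minus-as-Hilb-bimod}, and makes monotonicity $e_S \leq e_T$ for $S\subseteq T$ transparent since any projective basis for ${}_{A_S}X_S$ can be completed to one for ${}_{A_T}X_T$.

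The heart of the proof is the strict convergence $e_S \to 1$ in $\cK({}_A X)$, which for a unital C$^*$-algebra coincides with norm convergence (one applies the strict seminorms to the unit). Since $\cK({}_A X)$ is the norm closure of the span of rank-one operators $\xi^*\eta$, it suffices to check that $e_S(\xi^*\eta) \to \xi^*\eta$ and $(\xi^*\eta)e_S \to \xi^*\eta$ in operator norm for all $\xi,\eta \in X$. Exploiting the $A$-linearity and self-adjointness of $e_S$, both reduce to the norm estimate $\|e_S\eta - \eta\|_X \to 0$ for each $\eta \in X$. Since $e_S$ acts as the identity on $X_S$ and $\|e_S\|\leq 1$, the claim reduces by density to showing that $\bigcup_S X_S$ is dense in $X$.

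The main obstacle, and where the DHR locality really enters, is establishing this density. The DHR property provides right projective bases of $X$ localized in arbitrarily large intervals; any element of $X$ can be approximated in the right Hilbert module norm by finite sums $\sum_k a_k\,\xi_k\,b_k$ with $\xi_k$ drawn from such a localized basis in some fixed $T_0$ and $a_k,b_k \in A_T$ for $T$ growing, all of which lie in $X_{T \cup T_0}$. The bi-Hilbertian hypothesis then transfers this approximation from the right norm to the left norm, finishing the argument.
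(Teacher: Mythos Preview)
Your proof is correct and follows the same route as the paper's: unitality of $\cK({}_A X)$ from the preceding proposition, strict convergence $e_S \to 1$, upgrade to norm convergence via unitality, and conclude from $\|1-e_S\|\in\{0,1\}$. The paper simply asserts the strict convergence while you supply the density argument; your stated justification for monotonicity (completing a left $A_S$-basis of $X_S$ to one of $X_T$) is not quite right since $X_S$ is not an $A_T$-submodule, but monotonicity is never actually used, and it does hold for the simpler reason that $e_T$ is left $A$-linear and fixes each $\xi_i \in X_S \subset X_T$, whence $e_T e_S = e_S$.
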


\begin{proof}
Without knowing that $\cK(\prescript{}{A} X)$ is unital, we already know that the family $(e_S)_S$ converges to $1 \in \cM(\cK(\prescript{}{A} X)) = \cL(\prescript{}{A} X)$ for the strict topology.
By the existence of finite projective basis, we actually have $\cK(\prescript{}{A} X) = \cL(\prescript{}{A} X)$, and we get $\norm{1 - e_S} \to 0$.
Since $e_S$ are projections, we must have equality $\norm{1 - e_S} = 0$ for big enough $S$.
\end{proof}

This shows that $X$ has a left basis localized in some finite set $S$, but we still do not have a control on the size and location of $S$.
Our next goal is to incorporate the quantitative versions of charge-transporter generation conditions to achieve this.

\begin{proposition}\label{prop:use-braid-switch-from-left-mod-to-right-mod}
Let $L$ be an integer so that any DHR bimodule have localized basis with support length $L$.
Given $M > 2L$, $M - L > a > L$, and a DHR bimodule $Y$, consider the subspaces
\begin{align*}
\prescript{Y}{}X_M &= \Span \set{ \braket{ \eta^+ | \eta^- } \xi^- | \xi^- \in X_{[-a-L,-a-1]}, \eta^- \in Y_{[-M,-1]}, \eta^+ \in Y_{[0,M]} }, \\
X^{Y}_M &= \Span \set{ \xi^- \braket{ \eta^+ | \eta^- } | \xi^- \in X_{[-a-L,-a-1]}, \eta^- \in Y_{[-M,-1]}, \eta^+ \in Y_{[0,M]} }.
\end{align*}
Then there is a right $A_{[-M,-1] \cup [0, M]}$-linear isomorphism from $X^{Y}_M$ to $\prescript{Y}{}X_M$ that preserves the $A$-valued inner product.
\end{proposition}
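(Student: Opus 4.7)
The plan is to show that $X^Y_M$ and $\prescript{Y}{}X_M$ coincide as subspaces of $X$, so that the required isomorphism is realized by the identity map — which trivially preserves both the $A$-valued inner product and the right $A_{[-M,-1]\cup[0,M]}$-module structure inherited from $X$. The key tool is the DHR braiding of $X$ with $Y$, used via a careful choice of auxiliary projective bases.

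First, I would choose projective bases $(\mu^L_\alpha)_\alpha$ of $Y$ localized in $J_L = [-L,-1]$ and $(\mu^R_\beta)_\beta$ of $Y$ localized in $J_R = [0, L-1]$. Both intervals have length $L$; they lie inside $[-M,-1]$ and $[0,M]$ respectively by $M > 2L$, and crucially $J_L$ is disjoint from and lies strictly to the right of the support $[-a-L,-a-1]$ of $\xi^-$ by $a > L$. Expanding $\eta^- = \sum_\alpha \mu^L_\alpha u_\alpha$ and $\eta^+ = \sum_\beta \mu^R_\beta v_\beta$ with coefficients $u_\alpha = \braket{\mu^L_\alpha|\eta^-}$ and $v_\beta = \braket{\mu^R_\beta|\eta^+}$, weak algebraic Haag duality gives $u_\alpha \in A_{[-M,-1]}$ and $v_\beta \in A_{[0,M]}$, and the charge transporter decomposes as
\[
t = \braket{\eta^+|\eta^-} = \sum_{\alpha,\beta} v_\beta^*\, q_{\beta\alpha}\, u_\alpha, \qquad q_{\beta\alpha} = \braket{\mu^R_\beta|\mu^L_\alpha} \in A_{J_L \cup J_R}.
\]
Since the supports of $v_\beta$ and $q_{\beta\alpha}$ are disjoint from $[-a-L,-a-1]$, both factors commute with $\xi^-$; the only potentially non-commuting factor is $u_\alpha$.

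To establish the containment $X^Y_M \subset \prescript{Y}{}X_M$, I would invoke the basic braiding identity $\beta_{X,Y}(\xi^- \boxtimes \mu^L_\alpha) = \mu^L_\alpha \boxtimes \xi^-$, valid precisely because $\xi^-$ lies strictly to the left of $\mu^L_\alpha$. Combining this with the naturality of the braiding against the evaluation morphism $Y \boxtimes_A \bar Y \to A$ reorganizes $\xi^- t = \xi^-\braket{\eta^+|\eta^-}$ into the form $\sum_j \braket{\eta^+|\tilde\eta^-_j}\, \tilde\xi^-_j$ for specific vectors $\tilde\eta^-_j \in Y_{[-M,-1]}$ and $\tilde\xi^-_j \in X_{[-a-L,-a-1]}$, exhibiting $\xi^- t$ as an element of $\prescript{Y}{}X_M$; the reverse containment is obtained symmetrically by swapping the roles of left and right. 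The main obstacle is verifying that the reorganized charge transporters $\braket{\eta^+|\tilde\eta^-_j}$ actually land in $\langle Y_{[0,M]}|Y_{[-M,-1]}\rangle$ rather than in a larger space --- equivalently, that the $\tilde\eta^-_j$ truly lie in $Y_{[-M,-1]}$. This is guaranteed precisely by choosing $(\mu^L_\alpha)$ \emph{inside} $[-M,-1]$ and using the locality of the coefficients $u_\alpha$ from Haag duality, which is exactly what the hypotheses $M > 2L$ and $a > L$ enable.
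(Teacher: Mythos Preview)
Your plan has a genuine gap at the decisive step. After expanding $\eta^- = \sum_\alpha \mu^L_\alpha u_\alpha$ and using the braiding identity $\beta_{X,Y}(\xi^- \boxtimes \mu^L_\alpha) = \mu^L_\alpha \boxtimes \xi^-$ together with naturality against the evaluation, what one actually obtains is
\[
\xi^- \braket{\eta^+|\eta^-} \;=\; \sum_\alpha \braket{\eta^+|\mu^L_\alpha}\,(\xi^- u_\alpha),
\]
so the candidate vectors are $\tilde\eta^-_\alpha = \mu^L_\alpha$ and $\tilde\xi^-_\alpha = \xi^- u_\alpha$. Your claim that $\tilde\xi^-_\alpha \in X_{[-a-L,-a-1]}$ is unjustified: the coefficients $u_\alpha = \braket{\mu^L_\alpha|\eta^-}$ lie only in $A_{[-M,-1]}$, so $\xi^- u_\alpha$ lands in $X_{[-M,-1]}$, not in the much smaller $X_{[-a-L,-a-1]}$ that the definition of $\prescript{Y}{}X_M$ requires. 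You identified the ``main obstacle'' as controlling $\tilde\eta^-_j$, but that part is harmless; the real obstruction is on the $X$-side, and your argument does not address it. To rescue the equality-of-subspaces route one would have to prove separately that $\prescript{Y}{}X_M$ is stable under right multiplication by $A_{[-M,-1]}$, which is not obvious from the definition and essentially requires the same braiding manoeuvre in reverse.

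The paper sidesteps this entirely. Rather than trying to identify the two subspaces inside $X$, it introduces the intermediate $A_{[-M,-1]\cup[0,M]}$-bimodule
\[
Z = X_{[-M,-1]} \boxtimes_{A_{[-M,-1]}} Y_{[-M,-1]} \otimes \overline{Y_{[0,M]}}
\]
and defines two surjections $f_1\colon Z \to X^Y_M$ and $f_2\colon Z \to \prescript{Y}{}X_M$, the second one built from the braiding $X_{[-M,-1]} \boxtimes Y_{[-M,-1]} \to Y_{[-M,-1]} \boxtimes X_{[-M,-1]}$. One then checks directly, on the generating vectors $\xi_i \otimes \eta_j \otimes \overline{\eta^+}$ with $\xi_i$ localized to the left of $\eta_j$, that $f_1$ and $f_2$ induce the \emph{same} $A$-valued inner products; this yields the required right-linear isometric isomorphism without ever needing the two images to coincide pointwise in $X$.
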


\begin{proof}
Let us fix bases $(\xi_i)_i \subset X$ and $(\eta_j)_j \subset Y$, localized in finite sets $[-M,-a]$ and $[-L, -1]$.
Consider the $A_{[-M,-1] \cup [0, M]}$-bimodule
\[
Z = X_{[-M,-1]} \boxtimes_{A_{[-M,-1]}} Y_{[-M,-1]} \otimes \overline{Y_{[0,M]}}.
\]
Note that the vectors $\xi_i \otimes \eta_j \otimes \overline{\eta^+}$ generate $Z$ as a right $A_{[-M,-1] \cup [0, M]}$-module.

There is a surjective right $A_{[-M,-1] \cup [0, M]}$-homomorphism $f_1\colon Z \to X^Y_M$ given by
\[
f_1(\xi^- \otimes \eta^- \otimes \overline{\eta^+}) = \xi^- \braket{ \eta^+ | \eta^- }.
\]
To be precise, we write $\xi^-$ as a linear combination $\sum_i \xi_i a_i$ for $a_i \in A_{[-M, -1]}$ and send $\xi^- \otimes \eta^- \otimes \overline{\eta^+}$ to $\sum_i \xi_i \braket{ \eta^+ | a_i \eta^- } \in X^Y_M$, which indeed agrees with $\xi^- \braket{ \eta^+ | \eta^- }$.

There is also a surjective $A_{[-M,-1] \cup [0, M]}$-homomorphism $f_2\colon Z \to \prescript{Y}{} X_M$ given by
\[
f_2(\xi^- \otimes \eta^- \otimes \overline{\eta^+}) = \braket{ \eta^+ | \eta^{-\prime} } \xi^{-\prime},
\]
where $\eta^{-\prime} \otimes \xi^{-\prime}$ denotes the image of $\xi^- \otimes \eta^-$ under the braiding $X \boxtimes_A Y \to Y \boxtimes_A X$, which restricts to a map $X_{[-M,-1]} \boxtimes_{A_{[-M,-1]}} Y_{[-M,-1]} \to Y_{[-M,-1]} \boxtimes_{A_{[-M,-1]}} X_{[-M,-1]}$.

Then, with $\xi^- = \xi_i$ and $\eta^- = \eta_j$, we have $\eta^{-\prime} \otimes \xi^{-\prime} = \eta_j \otimes \xi_i$.
Let us compare the inner products between the vectors $f_k(\xi_i \otimes \eta_j \otimes \overline{\eta^+})$.

For $k = 2$, we get
\[
\braket{ \braket{ \eta^+_2 | \eta_{j_2} } \xi_{i_2} | \braket{ \eta^+_1 | \eta_{j_1} } \xi_{i_1} } = \braket{ \xi_{i_2} | \braket{ \eta_{i_2} | \eta^+_2 } \braket{ \eta^+_1 | \eta_{j_1} } \xi_{i_1} }.
\]
Since the vectors $\eta_j$ and $\eta^+$ are localized to the right of $\xi_i$, the element $\braket{ \eta_{i_2} | \eta^+_2 } \braket{ \eta^+_1 | \eta_{j_1} }$ commutes with $\xi_{i_1}$, and the above is equal to $\braket{ \xi_{i_2} | \xi_{i_1} } \braket{ \eta_{i_2} | \eta^+_2 } \braket{ \eta^+_1 | \eta_{j_1} }$.

For $k = 1$, we get
\[
\braket{ \xi_{i_2} \braket{\eta^+_2 | \eta_{j_2} } | \xi_{i_1} \braket{ \eta^+_1 | \eta_{j_1} } } = \braket{ \eta_{i_2} | \eta^+_2 } \braket{ \xi_{i_2} | \xi_{i_1} } \braket{ \eta^+_1 | \eta_{j_1} }.
\]
Again by the disjointedness of the support, this is equal to $\braket{ \xi_{i_2} | \xi_{i_1} } \braket{ \eta_{i_2} | \eta^+_2 } \braket{ \eta^+_1 | \eta_{j_1} }$, hence we get the equality of inner products.

By the (right) $A_{[-M,-1] \cup [0, M]}$-linearity, we get
\[
\braket{ f_1(\xi^-_2 \otimes \eta^-_2 \otimes \overline{\eta^+_2}) | f_1(\xi^-_1 \otimes \eta^-_1 \otimes \overline{\eta^+_1}) } = \braket{ f_2(\xi^-_2 \otimes \eta^-_2 \otimes \overline{\eta^+_2}) | f_2(\xi^-_1 \otimes \eta^-_1 \otimes \overline{\eta^+_1}) }
\]
for all vectors $\xi^-_i$, $\eta^-_i$, and $\eta^+_i$.
\end{proof}

\begin{corollary}\label{cor:localized-left-basis-for-DHR}
Suppose that $A_\bullet$ satisfies Condition~\ref{cond:reduced-net}.
Then there is a left basis of $X$ localized in $[-2L-a, -a]$.
\end{corollary}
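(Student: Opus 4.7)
The aim is to show that the closed left $A$-submodule $\overline{A \cdot X_{[-a-L,-a-1]}}$ equals $X$. Combined with the existence of a finite left projective basis from Proposition~\ref{prop:big-S-has-left-basis}, this yields a left projective basis of $X$ consisting of vectors localized in $[-a-L,-a-1] \subseteq [-2L-a,-a]$.

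The strategy combines three ingredients: a right projective basis $(\xi_i)_i \subset X_{[-a-L,-a-1]}$, which exists by the DHR property; the density in $A$ of finite linear combinations of charge transporters $\braket{\eta^+ | \eta^-}$ provided by Condition~\ref{cond:reduced-net} and Proposition~\ref{prop:reduced-algebra-generation}; and the braiding isomorphism of Proposition~\ref{prop:use-braid-switch-from-left-mod-to-right-mod} between right and left actions of charge transporters on localized vectors. For an arbitrary $x \in X$, I would begin with the expansion $x = \sum_i \xi_i \braket{\xi_i | x}$ in the right basis, and use Condition~\ref{cond:reduced-net} to approximate each coefficient $\braket{\xi_i | x} \in A$ by finite sums $\sum_l c_{i,l} \braket{\eta^+_l | \eta^-_l}$. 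The resulting approximating sums $\sum_{i,l} c_{i,l}\, \xi_i \braket{\eta^+_l | \eta^-_l}$ lie in the subspaces $X^Y_M$ of Proposition~\ref{prop:use-braid-switch-from-left-mod-to-right-mod}, each of which is isomorphic, as a right $A_{[-M,-1]\cup[0,M]}$-Hilbert module, to $\prescript{Y}{}{X}_M \subseteq A \cdot X_{[-a-L,-a-1]}$.

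The crucial observation is the freedom to choose the supports of the $\eta^\pm$ inside $Y_{[-M,-1]}$ and $Y_{[0,M]}$. Taking $\eta^-$ and $\eta^+$ supported strictly outside $[-a-L,-a-1]$ --- which is possible for DHR bases once $a > L$ --- the element $\braket{\eta^+ | \eta^-} \in A$ commutes with each $\xi_i$. In this regime the identity $\xi_i \braket{\eta^+ | \eta^-} = \braket{\eta^+ | \eta^-} \xi_i$ holds as literal vectors in $X$, so that the approximating sums simultaneously belong to $X^Y_M$ and to $A \cdot X_{[-a-L,-a-1]}$. Passing to the limit then gives $x \in \overline{A \cdot X_{[-a-L,-a-1]}}$, as required.

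The main obstacle is to ensure that charge transporters satisfying this additional support constraint still span a norm-dense subspace of $A$. To handle this I would expand arbitrary $\eta^\pm \in Y_\pm$ against DHR bases of $Y$ localized outside $[-a-L,-a-1]$, using the charge-transporter identities of Propositions~\ref{prop:interaction-T-and-P1} and~\ref{prop:t-star-t-and-right-support-of-t} together with right $A$-linearity and adjointness of the inner product. This rewrites any charge transporter $\braket{\eta^+ | \eta^-}$ as a $B_0$-bilinear combination of ones with the desired support restriction; the resulting $A_\pm$-factors, when placed next to $\xi_i$, can either be absorbed using the commutation of $\xi_i$ with $A_+$ or expanded back against the right basis using Proposition~\ref{prop:X-minus-as-Hilb-bimod} to stay inside $A \cdot X_{[-a-L,-a-1]}$. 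This reduces the density claim to the hypothesis of Condition~\ref{cond:reduced-net} in its original form and closes the argument.
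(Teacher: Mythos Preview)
Your ``crucial observation'' fails as stated. If $\eta^-$ is localized to the \emph{left} of $[-a-L,-a-1]$ while $\eta^+$ lies to the right --- both are then ``strictly outside'' --- the charge transporter $\braket{\eta^+|\eta^-}$ is a genuinely non-local operator bridging the two regions, and no Haag-duality argument places it in $A_{[-a-L,-a-1]^c}$; it does not commute with $\xi_i$. This is exactly why the isometry of Proposition~\ref{prop:use-braid-switch-from-left-mod-to-right-mod} has to be built from the braiding rather than being the identity on vectors.

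Even reading you charitably as restricting $\eta^-$ to $[-a,-1]$ so that both $\eta^\pm$ sit to the right of $\xi_i$ (where, modulo constants from weak Haag duality, commutation does hold), the repair in your last paragraph is circular. Writing a general transporter as $\sum_j \braket{\eta^+|\eta_j}\, b_j$ with $b_j\in A_-$ and commuting past $\xi_i$, you arrive at $\sum_j \braket{\eta^+|\eta_j}\,\xi_i b_j$ and now need $\xi_i b_j \in \overline{A\cdot\{\xi_k\}}$. Expanding $\xi_i b_j$ ``against the right basis'' via Proposition~\ref{prop:X-minus-as-Hilb-bimod} only returns $\sum_k \xi_k d_k$ with $d_k\in A_-$ --- right action again --- so nothing has been gained and you are back to the original problem. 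The paper sidesteps this entirely by never trying to identify the isometry of Proposition~\ref{prop:use-braid-switch-from-left-mod-to-right-mod} pointwise: for each fixed $M$ everything is finite-dimensional, charge-transporter generation gives $X^Y_M = X_{[-M,M]}$ for large enough $Y$, the isometry then forces $\dim \prescript{Y}{}X_M = \dim X_{[-M,M]}$, and since $\prescript{Y}{}X_M\subseteq X_{[-M,M]}$ the two coincide. Letting $M\to\infty$ past the $S$ of Proposition~\ref{prop:big-S-has-left-basis} finishes.
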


\begin{proof}
By assumption, the module $X^Y_M$ in Proposition~\ref{prop:use-braid-switch-from-left-mod-to-right-mod} (for big enough $Y$) agrees with $X_{[-2L,-1]} A_{[-M, M]} = X_{[-M, M]}$.
By this proposition, the subspace $A_{[-M, M]} X_{[-2L,-1]} = \prescript{Y}{}X_M \subset X_{[-M, M]}$ has the same dimension, hence they must agree.
We obtain the claim by letting $M \to \infty$ so that $[-M, M]$ covers $S$ from Proposition~\ref{prop:big-S-has-left-basis}.
\end{proof}

\begin{corollary}
Suppose that $A_\bullet$ satisfies Condition~\ref{cond:reduced-net-arbitrary-cut-point}.
If $X$ is an object of $\DHR(A_\bullet)$, then its dual bimodule $\overline{X}$ is again an object of $\DHR(A_\bullet)$.
\end{corollary}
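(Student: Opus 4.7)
The plan is to combine Corollary~\ref{cor:localized-left-basis-for-DHR} (in a shifted form, made available by Condition~\ref{cond:reduced-net-arbitrary-cut-point}) with a conjugation duality between left bases of $X$ and right projective bases of $\overline{X}$.

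First I would observe that the proofs of Proposition~\ref{prop:use-braid-switch-from-left-mod-to-right-mod} and Corollary~\ref{cor:localized-left-basis-for-DHR} are translation equivariant. The only feature of the cut at $0$ used in Proposition~\ref{prop:use-braid-switch-from-left-mod-to-right-mod} is to separate the vectors $\eta^- \in Y_{[-M,-1]}$ from $\eta^+ \in Y_{[0,M]}$ so that their supports are disjoint, making the braiding yield the desired intertwining of left and right actions of the charge transporter. Performing the same analysis around an arbitrary cut point $x \in \bZ$ and invoking Condition~\ref{cond:reduced-net-arbitrary-cut-point} to ensure that the associated charge transporters span $A$, we obtain a left basis of $X$ in the sense of Proposition~\ref{prop:big-S-has-left-basis}, localized in the translated interval $[-2L-a+x, -a+x]$, for any $a > L$ and any $x \in \bZ$. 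Varying $x$ and $a$, every sufficiently long interval of $\bZ$ supports such a left basis of $X$.

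Second, I would translate left bases of $X$ into right projective bases of $\overline{X}$ via conjugation. The involution $\xi \mapsto \bar{\xi}$ interchanges the left and right Hilbert module structures: one has $a \cdot \bar{\xi} = \overline{\xi a^*}$, $\bar{\xi} \cdot a = \overline{a^* \xi}$, and the right $A$-valued inner product on $\overline{X}$ is identified with the left one on $X$ via $\braket{\bar{\xi} | \bar{\eta}}_{\overline{X}} = \prescript{}{A}(\eta, \xi)$. Under this identification, the left basis identity $\sum_i \xi_i^* \xi_i = 1_{\cK(\prescript{}{A}X)}$ becomes the right projective basis identity for the family $(\bar{\xi}_i)_i$ in $\overline{X}$. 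Moreover localization is preserved: the condition $a\bar{\xi} = \bar{\xi} a$ is equivalent to $\xi a^* = a^* \xi$, and since $A_{I^c}$ is $*$-closed, $\bar{\xi}$ is localized in $I$ precisely when $\xi$ is.

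Combining the two steps, for every sufficiently long interval $I \subset \bZ$ we obtain a right projective basis of $\overline{X}$ localized in $I$. Since $\overline{X}$ is manifestly dualizable as a correspondence (with dual $X$ itself), this verifies that $\overline{X} \in \DHR(A_\bullet)$. The principal technical point is verifying that the proof of Corollary~\ref{cor:localized-left-basis-for-DHR} transports verbatim to an arbitrary cut point $x$; once Condition~\ref{cond:reduced-net-arbitrary-cut-point} is imposed, this is a routine translation and presents no essential obstacle.
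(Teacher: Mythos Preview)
Your argument is correct and is precisely the intended one: the paper states this corollary without proof immediately after Corollary~\ref{cor:localized-left-basis-for-DHR}, signaling that it follows from the translation-equivariant form of that result (enabled by Condition~\ref{cond:reduced-net-arbitrary-cut-point}) together with the standard correspondence between localized left bases of $X$ and localized right projective bases of $\overline{X}$. Your two-step outline captures exactly this.
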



\begin{bibdiv}
\begin{biblist}

\bib{Aasen_2016}{article}{
   author={Aasen, David},
   author={Mong, Roger S. K.},
   author={Fendley, Paul},
   title={Topological defects on the lattice: I. The Ising model},
   journal={J. Phys. A},
   volume={49},
   date={2016},
   number={35},
   pages={354001, 46},
   issn={1751-8113},
   review={\MR{3543452}},
   doi={10.1088/1751-8113/49/35/354001},
}

\bib{MR1441540}{book}{
      author={Bratteli, Ola},
      author={Robinson, Derek},
       title={Operator algebras and quantum statistical mechanics. 2},
     edition={Second},
      series={Texts and Monographs in Physics},
   publisher={Springer-Verlag, Berlin},
        date={1997},
        ISBN={3-540-61443-5},
         url={https://doi.org/10.1007/978-3-662-03444-6},
        note={Equilibrium states. Models in quantum statistical mechanics},
      review={\MR{1441540}},
}

\bib{MR4419534}{article}{
      author={Chen, Quan},
      author={Hern\'{a}ndez~Palomares, Roberto},
      author={Jones, Corey},
      author={Penneys, David},
       title={Q-system completion for {$\rm C^*$} 2-categories},
        date={2022},
        ISSN={0022-1236,1096-0783},
     journal={J. Funct. Anal.},
      volume={283},
      number={3},
       pages={Paper No. 109524, 59},
         url={https://doi.org/10.1016/j.jfa.2022.109524},
      review={\MR{4419534}},
}

\bib{PhysRevB.107.155136}{article}{
      author={Chatterjee, Arkya},
      author={Wen, Xiao-Gang},
       title={Symmetry as a shadow of topological order and a derivation of
  topological holographic principle},
        date={2023},
     journal={Phys. Rev. B},
      volume={107},
       pages={155136},
         url={https://link.aps.org/doi/10.1103/PhysRevB.107.155136},
}

\bib{MR1402012}{book}{
      author={Davidson, Kenneth~R.},
       title={{$C^*$}-algebras by example},
      series={Fields Institute Monographs},
   publisher={American Mathematical Society, Providence, RI},
        date={1996},
      volume={6},
        ISBN={0-8218-0599-1},
      review={\MR{1402012 (97i:46095)}},
}

\bib{Davydov2013a}{article}{
   author={Davydov, Alexei},
   author={M\"uger, Michael},
   author={Nikshych, Dmitri},
   author={Ostrik, Victor},
   title={The Witt group of non-degenerate braided fusion categories},
   journal={J. Reine Angew. Math.},
   volume={677},
   date={2013},
   pages={135--177},
   issn={0075-4102},
   review={\MR{3039775}},
   doi={10.1515/crelle.2012.014},
}

\bib{Frolich-Gabbiani}{article}{
   author={Gabbiani, Fabrizio},
   author={Fr\"ohlich, J\"urg},
   title={Operator algebras and conformal field theory},
   journal={Comm. Math. Phys.},
   volume={155},
   date={1993},
   number={3},
   pages={569--640},
   issn={0010-3616},
   review={\MR{1231644}},
}

\bib{gui2020unbounded}{misc}{
      author={Gui, Bin},
       title={Unbounded field operators in categorical extensions of conformal
  nets},
         how={preprint},
        date={2020},
      eprint={\href{http://arxiv.org/abs/2001.03095}{\texttt{arXiv:2001.03095
  [math.QA]}}},
}

\bib{MR3747830}{article}{
      author={Henriques, Andr{\'e}~G.},
       title={What {C}hern-{S}imons theory assigns to a point},
        date={2017},
        ISSN={0027-8424,1091-6490},
     journal={Proc. Natl. Acad. Sci. USA},
      volume={114},
      number={51},
       pages={13418\ndash 13423},
         url={https://doi.org/10.1073/pnas.1711591114},
      review={\MR{3747830}},
}

\bib{2205.15243}{article}{
   author={Hollands, Stefan},
   title={Anyonic chains---$\alpha $-induction---CFT---defects---subfactors},
   journal={Comm. Math. Phys.},
   volume={399},
   date={2023},
   number={3},
   pages={1549--1621},
   issn={0010-3616},
   review={\MR{4580529}},
   doi={10.1007/s00220-022-04581-w},
}

\bib{inamura202321dsymmetrytopologicalorderlocalsymmetric}{misc}{
      author={Inamura, Kansei},
      author={Wen, Xiao-Gang},
       title={{2+1D} symmetry-topological-order from local symmetric operators
  in {1+1D}},
         how={preprint},
        date={2023},
      eprint={\href{http://arxiv.org/abs/2310.05790}{\texttt{arXiv:2310.05790
  [cond-mat.str-el]}}},
}

\bib{MR4498161}{article}{
      author={Jones, Corey},
      author={Morrison, Scott},
      author={Penneys, David},
      author={Plavnik, Julia},
       title={Extension theory for braided-enriched fusion categories},
        date={2022},
        ISSN={1073-7928,1687-0247},
     journal={Int. Math. Res. Not. IMRN},
      number={20},
       pages={15632\ndash 15683},
         url={https://doi.org/10.1093/imrn/rnab133},
      review={\MR{4498161}},
}

\bib{2307.12552}{misc}{
      author={Jones, Corey},
      author={Naaijkens, Pieter},
      author={Penneys, David},
      author={Wallick, Daniel},
       title={Local topological order and boundary algebras},
         how={preprint},
        date={2023},
      eprint={\href{http://arxiv.org/abs/2307.12552}{\texttt{arXiv:2307.12552
  [math-ph]}}},
}

\bib{2304.00068}{article}{
   author={Jones, Corey},
   title={DHR bimodules of quasi-local algebras and symmetric quantum
   cellular automata},
   journal={Quantum Topol.},
   volume={15},
   date={2024},
   number={3},
   pages={633--686},
   issn={1663-487X},
   review={\MR{4814692}},
   doi={10.4171/qt/216},
}

\bib{MR3948170}{article}{
      author={Jones, Corey},
      author={Penneys, David},
       title={Realizations of algebra objects and discrete subfactors},
        date={2019},
        ISSN={0001-8708,1090-2082},
     journal={Adv. Math.},
      volume={350},
       pages={588\ndash 661},
         url={https://doi.org/10.1016/j.aim.2019.04.039},
      review={\MR{3948170}},
}

\bib{KawAnn}{article}{
   author={Kawahigashi, Yasuyuki},
   title={Projector matrix product operators, anyons and higher relative
   commutants of subfactors},
   journal={Math. Ann.},
   volume={387},
   date={2023},
   number={3-4},
   pages={2157--2172},
   issn={0025-5831},
   review={\MR{4657446}},
   doi={10.1007/s00208-022-02519-0},
}

\bib{Kawahigashi-Longo-Muger}{article}{
      author={Kawahigashi, Yasuyuki},
      author={Longo, Roberto},
      author={M\"{u}ger, Michael},
       title={Multi-interval subfactors and modularity of representations in
  conformal field theory},
        date={2001},
        ISSN={0010-3616,1432-0916},
     journal={Comm. Math. Phys.},
      volume={219},
      number={3},
       pages={631\ndash 669},
         url={https://doi.org/10.1007/PL00005565},
      review={\MR{1838752}},
}

\bib{MR2085108}{article}{
      author={Kajiwara, Tsuyoshi},
      author={Pinzari, Claudia},
      author={Watatani, Yasuo},
       title={Jones index theory for {H}ilbert {$C^*$}-bimodules and its
  equivalence with conjugation theory},
        date={2004},
        ISSN={0022-1236,1096-0783},
     journal={J. Funct. Anal.},
      volume={215},
      number={1},
       pages={1\ndash 49},
         url={https://doi.org/10.1016/j.jfa.2003.09.008},
      review={\MR{2085108}},
}

\bib{PRXQuantum.4.020357}{article}{
      author={Lootens, Laurens},
      author={Delcamp, Clement},
      author={Ortiz, Gerardo},
      author={Verstraete, Frank},
       title={Dualities in one-dimensional quantum lattice models: Symmetric
  {Hamiltonians} and matrix product operator intertwiners},
        date={2023},
     journal={PRX Quantum},
      volume={4},
       pages={020357},
         url={https://link.aps.org/doi/10.1103/PRXQuantum.4.020357},
}

\bib{MR1444286}{article}{
      author={Longo, R.},
      author={Roberts, J.~E.},
       title={A theory of dimension},
        date={1997},
        ISSN={0920-3036},
     journal={$K$-Theory},
      volume={11},
      number={2},
       pages={103\ndash 159},
  eprint={\href{http://arxiv.org/abs/funct-an/9604008}{\texttt{arXiv:funct-an/9604008
  [math.FA]}}},
         url={http://dx.doi.org/10.1023/A:1007714415067},
         doi={10.1023/A:1007714415067},
      review={\MR{1444286 (98i:46065)}},
}

\bib{MR1966525}{article}{
      author={M{\"u}ger, Michael},
       title={From subfactors to categories and topology. {II}. {T}he quantum
  double of tensor categories and subfactors},
        date={2003},
        ISSN={0022-4049,1873-1376},
     journal={J. Pure Appl. Algebra},
      volume={180},
      number={1-2},
       pages={159\ndash 219},
         url={https://doi.org/10.1016/S0022-4049(02)00248-7},
      review={\MR{1966525}},
}

\bib{MR1990929}{article}{
      author={M{\"u}ger, Michael},
       title={On the structure of modular categories},
        date={2003},
        ISSN={0024-6115,1460-244X},
     journal={Proc. London Math. Soc. (3)},
      volume={87},
      number={2},
       pages={291\ndash 308},
         url={https://doi.org/10.1112/S0024611503014187},
      review={\MR{1990929}},
}

\bib{MR1721563}{article}{
      author={M{\"u}ger, Michael},
       title={On charged fields with group symmetry and degeneracies of
  {V}erlinde's matrix {$S$}},
        date={1999},
        ISSN={0246-0211},
     journal={Ann. Inst. H. Poincar\'e{} Phys. Th\'eor.},
      volume={71},
      number={4},
       pages={359\ndash 394},
         url={http://www.numdam.org/item?id=AIHPA_1999__71_4_359_0},
      review={\MR{1721563}},
}

\bib{MR1463825}{article}{
   author={Nill, Florian},
   author={Szlach\'anyi, Korn\'el},
   title={Quantum chains of Hopf algebras with quantum double cosymmetry},
   journal={Comm. Math. Phys.},
   volume={187},
   date={1997},
   number={1},
   pages={159--200},
   issn={0010-3616},
   review={\MR{1463825}},
   doi={10.1007/s002200050132},
}

\bib{MR3509018}{article}{
      author={Neshveyev, Sergey},
      author={Yamashita, Makoto},
       title={Drinfeld {C}enter and {R}epresentation {T}heory for {M}onoidal
  {C}ategories},
        date={2016},
        ISSN={0010-3616},
     journal={Comm. Math. Phys.},
      volume={345},
      number={1},
       pages={385\ndash 434},
      eprint={1501.07390},
         url={http://dx.doi.org/10.1007/s00220-016-2642-7},
      review={\MR{3509018}},
}

\bib{palomares2023discrete}{misc}{
      author={Hern{\'a}ndez~Palomares, Roberto},
      author={Nelson, Brent},
       title={Discrete inclusions of c{$^*$}-algebras},
         how={preprint},
        date={2023},
      eprint={\href{http://arxiv.org/abs/2305.05072}{\texttt{arXiv:2305.05072
  [math.OA]}}},
}

\bib{MR1645078}{article}{
      author={Wassermann, Antony},
       title={Operator algebras and conformal field theory. {III}. {F}usion of
  positive energy representations of {${\rm LSU}(N)$} using bounded operators},
        date={1998},
        ISSN={0020-9910,1432-1297},
     journal={Invent. Math.},
      volume={133},
      number={3},
       pages={467\ndash 538},
         url={https://doi.org/10.1007/s002220050253},
      review={\MR{1645078}},
}

\bib{MR996807}{article}{
      author={Watatani, Yasuo},
       title={Index for {$C^*$}-subalgebras},
        date={1990},
        ISSN={0065-9266,1947-6221},
     journal={Mem. Amer. Math. Soc.},
      volume={83},
      number={424},
       pages={vi+117},
         url={https://doi.org/10.1090/memo/0424},
      review={\MR{996807}},
}

\end{biblist}
\end{bibdiv}

\end{document}